\documentclass[letterpaper,11pt]{amsart}


\usepackage[margin=1.2in]{geometry}
\usepackage{amsmath,amsthm,amssymb}
\usepackage{xspace,xcolor}
\usepackage[breaklinks,colorlinks,citecolor=teal,linkcolor=teal,urlcolor=teal,pagebackref,hyperindex]{hyperref}
\usepackage[alphabetic]{amsrefs}
\usepackage[all]{xy}
\usepackage{color}
\usepackage{tikz-cd}

\setlength{\parskip}{.05 in}


\theoremstyle{plain}
\newtheorem{thm}{Theorem}[section]

\newtheorem{lem}[thm]{Lemma}
\newtheorem{prop}[thm]{Proposition}
\newtheorem{cor}[thm]{Corollary}

\theoremstyle{definition}
\newtheorem{defi}[thm]{Definition}

\newtheorem{eg}[thm]{Example}

\newtheorem{question}[thm]{Question}

\theoremstyle{remark}
\newtheorem{rmk}[thm]{Remark}

\newtheorem{claim}[thm]{Claim}



\def\Z{{\mathbf Z}}
\def\Q{{\mathbf Q}}

\def\C{{\mathbf C}}

\def\A{{\mathbf A}}
\def\P{{\mathbf P}}

\def\cD{\mathcal{D}}

\def\cH{\mathcal{H}}
\def\cI{\mathcal{I}}
\def\cJ{\mathcal{J}}

\def\cO{\mathcal{O}}

\def\cR{\mathcal{R}}

\def\fra{\mathfrak{a}}

\def\frm{\mathfrak{m}}

\def\bff{{\bf f}}

\def\.{\cdot}
\def\^{\widehat}

\def\de{\partial}

\def\({\left(}
\def\){\right)}

\renewcommand{\and}{ \ \ \text{ and } \ \ }

\newcommand{\factor}[2]{\left. \raise 2pt\hbox{$#1$} \right/\hskip -2pt\raise -2pt\hbox{$#2$}}

\DeclareMathOperator{\lct} {lct}

\makeatletter
\@namedef{subjclassname@2020}{\textup{2020} Mathematics Subject Classification}
\makeatother



\usepackage{soul}

\begin{document}

\author[Q.~Chen]{Qianyu Chen}

\address{Department of Mathematics, University of Michigan, 530 Church Street, Ann Arbor, MI 48109, USA}

\email{qyc@umich.edu}

\author[B.~Dirks]{Bradley Dirks}

\address{Department of Mathematics, Stony Brook University, Stony Brook, NY 11794-3651, USA}

\email{bradley.dirks@stonybrook.edu}

\author[M.~Musta\c{t}\u{a}]{Mircea Musta\c{t}\u{a}}

\address{Department of Mathematics, University of Michigan, 530 Church Street, Ann Arbor, MI 48109, USA}

\email{mmustata@umich.edu}

\author[S.~Olano]{Sebasti\'{a}n Olano}

\address{Department of Mathematics, University of Toronto, 40 St. George St., Toronto, Ontario,
CANADA , M5S 2E4}

\email{seolano@math.toronto.edu}

\thanks{Q.C. was partially supported by NSF grant DMS-1952399, B.D. was partly supported by NSF grant DMS-2001132 and NSF-MSPRF grant DMS-2303070, 
 and M.M. was partially supported by NSF grants DMS-2301463 and DMS-1952399.}

\subjclass[2020]{14F10, 14B05, 14J17}

\begin{abstract}
We define and study a notion of \emph{minimal exponent} for a local complete intersection subscheme $Z$ of a smooth complex algebraic variety $X$, extending the invariant defined by Saito in the case of hypersurfaces. Our definition is in terms of the Kashiwara-Malgrange $V$-filtration associated to $Z$. We show that the minimal exponent describes how far the Hodge filtration and order filtration agree on the local cohomology $\cH^r_Z(\cO_X)$, where $r$ is the codimension of $Z$ in $X$. We also study its relation to the Bernstein-Sato polynomial of $Z$. Our main result describes the minimal exponent of a higher codimension subscheme in terms of the invariant associated to a suitable hypersurface; this allows proving the main properties of this invariant by reduction to the codimension $1$ case. A key ingredient for our main result is a description of the Kashiwara-Malgrange $V$-filtration associated to \emph{any} ideal $(f_1,\ldots,f_r)$ in terms of the microlocal $V$-filtration associated to the hypersurface defined by $\sum_{i=1}^rf_iy_i$. 
\end{abstract}

\title[$V$-filtrations and minimal exponents]{$V$-filtrations and minimal exponents for local complete intersections}

\maketitle

\section{Introduction}

Let $X$ be a smooth, irreducible, complex algebraic variety. If $Z$ is a nonempty hypersurface in $X$, then the \emph{minimal exponent} $\widetilde{\alpha}(Z)$ of $Z$
(written also as $\widetilde{\alpha}(f)$ if $Z$ is defined by $f\in\cO_X(X)$)
is an important invariant of the singularities of $Z$ introduced by Saito \cite{Saito_microlocal}. When $Z$ has isolated singularities, it can be described via 
asymptotic expansions of integrals along vanishing cycles and it was studied extensively in the 80s, see for example \cite{Varchenko}, \cite{Steenbrink},
and \cite{Loeser}; in this setting, it has been known as \emph{complex singularity index} or
\emph{Arnold exponent} of $Z$. In general, it is defined as the negative of the largest root of the \emph{reduced Bernstein-Sato polynomial} of $Z$
(with the convention that it is $\infty$ if this polynomial is $1$, which is the case if and only if $Z$ is smooth). By results of Koll\'{a}r \cite{Kollar} and Lichtin
\cite{Lichtin}, it is known that the minimal exponent refines an important invariant of singularities in birational geometry, the \emph{log canonical threshold} ${\rm lct}(X,Z)$;
more precisely, we always have ${\rm lct}(X, Z)=\min\{\widetilde{\alpha}(Z),1\}$. 
Our main goal in this paper is to introduce and study a generalization of the minimal exponent to the
case when $Z$ is locally a complete intersection in $X$.

Before giving the definition in the general context, we recall the connection between the minimal exponent of hypersurfaces and two important $\cD$-module theoretic 
constructions associated to $Z$, the Hodge filtration on the local cohomology $\cH_Z^1(\cO_X)$ of $\cO_X$ along $Z$ and the Kashiwara-Malgrange $V$-filtration associated to $Z$.
Recall that if $Z$ is any closed subscheme of $X$, the local cohomology sheaves $\cH^q_Z(\cO_X)$ underlie mixed Hodge modules in the sense of Saito's theory \cite{Saito_MHM}. 
In particular, they carry  a Hodge filtration: this is an increasing filtration by coherent $\cO_X$-modules which is compatible with the order filtration on the sheaf
$\cD_X$ of differential operators on $X$. If $Z$ is a reduced hypersurface in $X$, then the only nonzero local cohomology is $\cH^1_Z(\cO_X)=\cO_X(*Z)/\cO_X$
(where $\cO_X(*Z)$ is the sheaf of rational functions with poles along $Z$). In this case it is known that for every $k\geq 0$ we have
$$F_k\cH^1_Z(\cO_X)\subseteq O_k\cH_Z^1(\cO_X):=\cO_X\big((k+1)Z\big)/\cO_X$$
and Saito showed in \cite{Saito-MLCT} that 
\begin{equation}\label{equation1_intro}
F_k\cH^1_Z(\cO_X)=O_k\cH^1(\cO_X)\,\,\text{for all}\,\,k\leq p\quad\text{if and only if}\quad\widetilde{\alpha}(Z)\geq p+1.
\end{equation}
Using a refinement of this result to a setting involving twists by rational multiples of $Z$, as well as properties of Hodge filtrations, 
it was shown in \cite{MP1} that one can extend the known properties of the Arnold exponent to arbitrary hypersurface singularities. 

The proof of (\ref{equation1_intro}) makes use of results about $V$-filtrations. Let us briefly recall this notion, due to 
Malgrange \cite{Malgrange} and Kashiwara \cite{Kashiwara}, in the more general context that is relevant to this paper. 
 Working locally, let us suppose that
$Z$ is a closed subscheme of $X$ defined by the ideal generated by nonzero regular functions $f_1,\ldots,f_d\in\cO_X(X)$. 
If $\iota\colon X\hookrightarrow X\times\A^d$ is the graph embedding associated to ${\mathbf f}=(f_1,\ldots,f_d)$, that is,
 $\iota(x)=\big(x,f_1(x),\ldots,f_d(x)\big)$, then the $V$-filtration is a decreasing filtration $(V^{\gamma}B_{\mathbf f})_{\gamma\in\Q}$ on 
 $$B_{\mathbf f}:=\iota_+\cO_X=\bigoplus_{\beta\in\Z_{\geq 0}^d}\cO_X\partial_t^{\beta}\delta_{\mathbf f},$$
 indexed by rational numbers, and characterized by a few properties (for details, see Section~\ref{section_review}). 
In the case of one function, the $V$-filtration plays an important role in the theory of mixed Hodge modules. We also recall that
in the case when we only have one function $g\in\cO_X(X)$, Saito introduced in \cite{Saito_microlocal} a related filtration, 
the \emph{microlocal $V$-filtration}
$(V^{\gamma}\widetilde{B}_g)_{\gamma\in\Q}$ on 
$$\widetilde{B}_g=\bigoplus_{j\in\Z}\cO_X\partial_t^j\delta_g.$$
If $Z$ is the hypersurface defined by $g$, then the minimal exponent $\widetilde{\alpha}(Z)$ is described as follows:
$$\widetilde{\alpha}(Z)=\sup\{\gamma>0\mid \delta_g\in V^{\gamma}\widetilde{B}_g\}$$
(see \cite[(1.3.4)]{Saito-MLCT}). In terms of the usual $V$-filtration, this says that if $q$ is a nonnegative integer and
$\gamma\in (0,1]$ is a rational number, then 
\begin{equation}\label{eq2_intro}
\widetilde{\alpha}(Z)\geq q+\gamma\quad\text{if and only if}\quad \partial_t^q\delta_g\in V^{\gamma}B_g.
\end{equation}

Suppose now that $Z$ is a closed subscheme of $X$ that is a local complete intersection, of pure codimension $r\geq 1$. We define the minimal exponent
$\widetilde{\alpha}(Z)$ such that the analogue of formula of (\ref{eq2_intro}) holds in this setting. Working locally, we may assume that $Z$ is defined
by the ideal generated by $f_1,\ldots,f_r\in\cO_X(X)$. In this case, we put 
\begin{equation}\label{eq3_intro}
\widetilde{\alpha}(Z)
= \left\{
\begin{array}{cl}
\sup\{\gamma>0\mid \delta_{\mathbf f}\in V^{\gamma}B_{\bff}\} , & \text{if}\,\,\delta_{\mathbf f}\not\in V^rB_{\mathbf f}; \\[2mm]
\sup\{r-1+q+\gamma\mid \partial_t^{\beta}\delta_{\mathbf f}\in V^{r-1+\gamma}B_{\bff}\,\,\text{for}\,\,|\beta|\leq q\} , & \text{if}\,\,\delta_{\mathbf f}\in V^rB_{\mathbf f},
\end{array}\right.
\end{equation}
where in the latter case, the supremum is over all nonnegative integers $q$ and all rational numbers $\gamma\in (0,1]$ with the property that
$\partial_t^{\beta}\delta_{\mathbf f}\in V^{r-1+\gamma}B_{\bff}$ for all $\beta=(\beta_1,\ldots,\beta_r)\in\Z_{\geq 0}^r$, with $\beta_1+\ldots+\beta_r\leq q$. 
In fact, the supremum in the definition is a maximum unless $\widetilde{\alpha}(Z)=\infty$ (which we show is the case if and only if $Z$ is smooth). 
We note that by \cite[Theorem~1]{BMS}, which describes the multiplier ideals of $Z$ in terms of $V^{\bullet}B_{\mathbf f}$, we have
$$\lct(X,Z)=\min\{\widetilde{\alpha}(Z),r\}.$$

We note that $\widetilde{\alpha}(Z)$ does depend on the ambient variety and not just on $Z$. Whenever $X$ is not understood from the context, we write
$\widetilde{\alpha}(X,Z)$ in order to avoid confusion. However, the dependence is easy to understand: the difference $\widetilde{\alpha}(X,Z)-\dim(X)$
only depends on $Z$ (see Proposition~\ref{dep_on_X}). 

In order to prove the basic properties of the minimal exponent for local complete intersections, we describe it 
as the minimal exponent of a hypersurface. Arguing locally, we may again assume that $Z$ has pure codimension $r$ in $X$
and it is defined in $X$ by the ideal
generated by $f_1,\ldots,f_r\in\cO_X(X)$. We consider $Y=X\times \A^r$, with coordinates $y_1,\ldots,y_r$ on $\A^r$, and let
$U=X\times\big(\A^r\smallsetminus \{0\}\big)$.

\begin{thm}\label{thm2_intro}
With the above notation, if $g=\sum_{i=1}^rf_iy_i\in\cO_Y(Y)$, then 
$$\widetilde{\alpha}(Z)=\widetilde{\alpha}(g\vert_U).$$
\end{thm}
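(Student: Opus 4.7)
The plan is to match, term by term, the definition (\ref{eq3_intro}) of $\widetilde{\alpha}(Z)$ against Saito's microlocal characterization of $\widetilde{\alpha}(g\vert_U)$, using the comparison between the Kashiwara--Malgrange $V$-filtration on $B_{\mathbf{f}}$ and the microlocal $V$-filtration on $\widetilde{B}_g$ advertised in the abstract as the paper's key technical ingredient. The restriction to $U=X\times(\A^r\setminus\{0\})$ is essential: since $g$ vanishes identically along $X\times\{0\}$, that locus contributes features to $(g=0)\subset Y$ that are unrelated to the singularities of $Z$, and removing it leaves a hypersurface whose singularities along $Z\times\{0\}$ reflect precisely those of $Z\subset X$.

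The heart of the argument is the following dictionary. Under the comparison theorem, each generator $\partial_t^{\beta}\delta_{\mathbf{f}}$ of $B_{\mathbf{f}}$ corresponds to an element of $\widetilde{B}_{g\vert_U}$ built from $y^{\beta}$ and a suitable power of $\partial_t$, with a uniform shift of $r-1$ in the filtration index accounting for the codimension jump from $1$ (for $g$) to $r$ (for $Z$). Consequently, the requirement $\partial_t^{\beta}\delta_{\mathbf{f}}\in V^{r-1+\gamma}B_{\mathbf{f}}$ for all $|\beta|\le q$ should translate, on $U$, into the single hypersurface condition $\partial_t^{q}\delta_g\in V^{\gamma}\widetilde{B}_{g\vert_U}$; the collapse from a family of multi-index conditions to a single one uses that over $U$ the $y_i$'s have no common zero, so the monomials $\{y^{\beta}\}_{|\beta|\le q}$ act surjectively onto the relevant $V$-filtration stratum via the $\cD$-action.

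With this dictionary in hand, the two branches of (\ref{eq3_intro}) match Saito's hypersurface criterion (\ref{eq2_intro}) applied to $g\vert_U$. Case~1 ($\delta_{\mathbf{f}}\notin V^rB_{\mathbf{f}}$) corresponds to the regime $\widetilde{\alpha}(g\vert_U)<1$, where only the $\beta=0$ piece is active; Case~2 covers $\widetilde{\alpha}(g\vert_U)\ge 1$, and the supremum $r-1+q+\gamma$ matches $q+\gamma$ after undoing the shift by $r-1$. The case $\widetilde{\alpha}(Z)=\infty$ is handled separately via the equivalence (also proved from the definition) that $\widetilde{\alpha}(Z)=\infty$ iff $Z$ is smooth, iff $(g\vert_U=0)$ is smooth on $U$.

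The main obstacle is establishing the dictionary rigorously. The natural route is via the uniqueness characterization of the $V$-filtration on $B_{\mathbf{f}}$: one constructs an explicit candidate for $V^{\alpha}B_{\mathbf{f}}$ out of $V^{\alpha-r+1}\widetilde{B}_{g\vert_U}$ and verifies the defining axioms---coherence, compatibility with the actions of $t_i$ and $\partial_{t_i}$, and nilpotency of $\sum_i t_i\partial_{t_i}+\alpha$ on each graded piece. Transporting a $V$-filtration across this Fourier-type correspondence between a multi-variable graph embedding and a single hypersurface on a higher-dimensional ambient variety---while handling the restriction to $U$ carefully---is the real technical core; once it is in place, the remainder of the proof is bookkeeping with the shift by $r-1$.
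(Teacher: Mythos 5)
Your proposal correctly identifies the general strategy — reduce to a comparison between the $V$-filtration on $B_{\mathbf f}$ and the microlocal $V$-filtration on $\widetilde{B}_g$ — but gets the details of that comparison wrong in a way that would make the proof fail.

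First, and most importantly, there is \emph{no shift by $r-1$} in the $V$-filtration correspondence. The paper's comparison result (Theorem~\ref{thm_V_filtration}) is a strict equality at the same index: with $\varphi_0(y^{\alpha}\partial_z^{|\alpha|}\delta_g)=\partial_t^{\alpha}\delta_{\mathbf f}$, one has $V^{\gamma}\widetilde{B}_g^{(0)}=\varphi_0^{-1}(V^{\gamma}B_{\mathbf f})$ for \emph{every} $\gamma$. Your "explicit candidate for $V^{\alpha}B_{\mathbf f}$ out of $V^{\alpha-r+1}\widetilde{B}_{g\vert_U}$" would give the wrong filtration, and the downstream assertion that $\widetilde{\alpha}(g\vert_U)$ matches $\widetilde{\alpha}(Z)$ "after undoing the shift by $r-1$" is incorrect: the theorem is an exact equality $\widetilde{\alpha}(Z)=\widetilde{\alpha}(g\vert_U)$, with the $r-1$ in the definition~(\ref{eq3_intro}) absorbed on both sides. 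Relatedly, your case analysis is mislabeled: $\delta_{\mathbf f}\notin V^rB_{\mathbf f}$ corresponds to $\widetilde{\alpha}(g\vert_U)<r$, not $<1$, and the other branch to $\widetilde{\alpha}(g\vert_U)\geq r$, not $\geq 1$.

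Second, the comparison theorem is proved on the whole of $Y=X\times\A^r$, not on $U$. The passage from $Y$ to $U$ is the genuinely delicate step, and your proposal does not supply the tool for it. The paper's mechanism is Lemma~\ref{lem_obs}: if $y^{\alpha}\partial_z^{|\alpha|}\delta_g$ has its $V$-filtration jump at level $\gamma$ on $Y$ but lies in $V^{>\gamma}$ after restriction to $U$, then necessarily $\gamma\geq r$. This is exactly what rules out the "junk" coming from the locus $X\times\{0\}$ and makes the conditions on $U$ equivalent to the conditions on $Y$ in the relevant range $\gamma\leq r$ (or $\gamma<r$ in the second case). Without an argument of this type, one cannot conclude $\delta_g\in V^{\beta}\widetilde{B}_g$ on $Y$ from $\delta_g\in V^{\beta}\widetilde{B}_g$ on $U$. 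Finally, the "collapse from a family of multi-index conditions to a single one" that you invoke works in the paper by covering $U$ by the opens $\{y_i\neq 0\}$ and using that $y_i$ is invertible there together with the isomorphisms $\partial_z^j\colon V^{\gamma}\widetilde{B}_g\simeq V^{\gamma-j}\widetilde{B}_g$; your appeal to surjectivity of $\{y^{\beta}\}$ via the $\cD$-action is too vague to substitute for this.
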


The proof of Theorem~\ref{thm2_intro} relies on a general result of independent interest describing the $V$-filtration associated to $f_1,\ldots,f_d\in\cO_X(X)$
(without any complete intersection assumption) in terms of the microlocal $V$-filtration associated to $g=\sum_{i=1}^df_iy_i\in\cO_X(X)[y_1,\ldots,y_d]$;
see Theorem~\ref{thm_V_filtration} for the precise statement. 
Another application of this connection is a relation between $b$-functions corresponding to $f_1,\ldots,f_d$ and microlocal $b$-functions corresponding to $g$.
This greatly extends the main result of \cite{Mustata}, which says that the
Bernstein-Sato polynomial $b_Z(s)$ of $Z$ is equal to the reduced Bernstein-Sato polynomial $b_g(s)/(s+1)$ of $g$.

The description in Theorem~\ref{thm2_intro}, together with the results on minimal exponents of hypersurfaces from \cite{MP1}, allow
us to obtain similar results for local complete intersections. In order to state these, it is convenient to use a local version of the minimal exponent. 
If $Z$ is a local complete intersection in $X$ as above and $x\in Z$ is a point, then we put $\widetilde{\alpha}_x(Z):=\max_{V\ni x}\widetilde{\alpha}(V, Z\cap V)$,
where the maximum is over the open neighborhoods $V$ of $x$ in $X$.

\begin{thm}\label{thm3_intro}
Let $X$ be a smooth, irreducible, $n$-dimensional complex algebraic variety and 
let $Z$ be a local complete intersection closed subscheme of $X$, of pure codimension $r$. 
\begin{enumerate}
\item[i)] If $H$ is a smooth hypersurface in $X$ that contains no irreducible component of $Z$ and $Z_H=Z\cap H\hookrightarrow H$, then for every 
$x\in Z_H$, we have
$$\widetilde{\alpha}_x(H, Z_H)\leq\widetilde{\alpha}_x(X, Z).$$
\item[ii)] Given a smooth morphism $\mu\colon X\to T$ such that for every $t\in T$, $Z_t:=Z\cap \mu^{-1}(t)\hookrightarrow
X_t=\mu^{-1}(t)$ has pure codimension $r$, then the following hold: 
\begin{enumerate}
\item[${\rm ii_1)}$] For every $\alpha\in\Q_{>0}$, the set 
$$\big\{x\in Z\mid \widetilde{\alpha}_x(X_{\mu(x)}, Z_{\mu(x)})\geq \alpha\}$$
is open in $Z$. 
\item[${\rm ii_2)}$] There is an open subset $T_0$ of $T$ such that for every $t\in T_0$ and $x\in Z_t$, we have
$$\widetilde{\alpha}_x(X_t, Z_t)=\widetilde{\alpha}_x(X, Z).$$
\end{enumerate}
In particular, the set $\big\{\widetilde{\alpha}_x(X_{\mu(x)}, Z_{\mu(x)})\mid x\in Z\big\}$ is finite. Moreover, 
if $s\colon T\to X$ is a section of $\mu$ such that $s(T)\subseteq Z$, then the set 
$\big\{t\in T\mid \widetilde{\alpha}_{s(t)}(X_t, Z_t)\geq\alpha\big\}$ is open in $T$ for every $\alpha\in\Q_{>0}$.
\item[iii)] If $x\in Z$ is a point defined by the ideal $\frm_x$ and the ideal defining $Z$ at $x$ is contained in $\frm_x^k$, for some $k\geq 2$, then
$$\widetilde{\alpha}_x(Z)\leq \frac{n}{k}.$$
\end{enumerate}
\end{thm}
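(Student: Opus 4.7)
Plan. All three parts are proved by reducing to the analogous statements for hypersurfaces, established in \cite{MP1}, via Theorem~\ref{thm2_intro}. Near $x$, write $Z = V(f_1, \ldots, f_r)$ and set $Y = X \times \A^r$, $U = X \times (\A^r \smallsetminus \{0\})$, $g = \sum_i f_i y_i$. Applying Theorem~\ref{thm2_intro} on arbitrarily small neighborhoods of $x$, together with the $\C^*$-invariance of $g$ under the scaling action on $\A^r$, yields the local form
\begin{equation*}
\widetilde{\alpha}_x(X,Z) \,=\, \min_{y \in \A^r \smallsetminus \{0\}} \widetilde{\alpha}_{(x,y)}(g),
\end{equation*}
with the right-hand side depending on $y$ only through $[y] \in \P^{r-1}$.

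For (i), put $Y_H := H \times \A^r$. Since $H$ contains no component of $Z$, the function $g$ does not vanish identically on $Y_H$: otherwise $f_i|_H = 0$ for every $i$, forcing $H \subseteq Z$. Hence $Y_H$ is a smooth hypersurface in $Y$ not containing any component of $V(g)$. Applying the hypersurface restriction theorem of \cite{MP1} pointwise at every $(x, y)$ with $y \ne 0$ and taking the minimum over $y$ yields $\widetilde{\alpha}_x(H, Z_H) \le \widetilde{\alpha}_x(X, Z)$.

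For (ii), extend $\mu$ to a smooth morphism $\widetilde{\mu} \colon Y \to T$ by composing with the projection $Y \to X$; the fibers are $Y_t = X_t \times \A^r$ and $g|_{Y_t}$ realizes the analogous $g$-construction for $(X_t, Z_t)$. The hypersurface versions from \cite{MP1} of openness, generic equality, finiteness, and semicontinuity along a section then transfer to our setting via the min-formula. The passage from pointwise statements on $V(g|_U)$ to statements on $Z$ uses the $\C^*$-invariance to descend to the projectivization $Z \times \P^{r-1}$; since the projection $Z \times \P^{r-1} \to Z$ is proper, closed (resp. open) conditions project to closed (resp. open) ones. The same device, applied to the lift $\widetilde{s}(t) = (s(t), y_0)$ for any fixed $y_0 \ne 0$ combined with properness of $T \times \P^{r-1} \to T$, handles the openness in $T$ along the section $s$.

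For (iii), by the local formula it suffices to produce $y_0 \in \A^r \smallsetminus \{0\}$ with $\widetilde{\alpha}_{(x, y_0)}(g) \le n/k$. The direct multiplicity estimate yields only $(n + r)/k$ and is too weak. Instead, invoke the log-discrepancy bound for hypersurface minimal exponents from \cite{MP1}: for any prime divisor $E$ over $Y$, $\widetilde{\alpha}_{(x, y_0)}(g) \le a_E(Y)/\ord_E(g)$. Take $E$ to be the exceptional divisor of the weighted blow-up of $(x, y_0)$ with weights $1/k$ on each $X$-coordinate and $\epsilon > 0$ on each $\A^r$-coordinate. In local coordinates $(u, v)$ at $(x, y_0)$,
\begin{equation*}
g(u, v) \,=\, h(u) + \sum_{\ell = 1}^r v_\ell f_\ell(u), \qquad h := \sum_i y_0^{(i)} f_i \in \frm_x^k.
\end{equation*}
Replacing $k$ by the largest integer with $I_Z \subseteq \frm_x^k$ (which only strengthens the desired bound), some $\C$-linear combination of the $f_i$'s has multiplicity exactly $k$ at $x$, so we can choose $y_0$ with $h$ of multiplicity exactly $k$. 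Then $\ord_E(g) = 1$ and $a_E(Y) = n/k + r\epsilon$, giving $\widetilde{\alpha}_{(x, y_0)}(g) \le n/k + r\epsilon$ for every $\epsilon > 0$; letting $\epsilon \to 0^+$ yields the claim. The main obstacle is this last step: the naive multiplicity bound falls short by $r/k$, and one must exploit the linear-in-$y$ structure of $g$ through a weighted log-discrepancy estimate to reach the sharp value $n/k$; parts (i) and (ii) are essentially formal once the local form of Theorem~\ref{thm2_intro} and the $\C^*$-equivariance are in place.
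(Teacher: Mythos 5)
Parts (i) and (ii) follow essentially the same route as the paper: reduce via Theorem~\ref{thm2_intro}, use the local formula $\widetilde{\alpha}_x(X,Z)=\min_{y\neq 0}\widetilde{\alpha}_{(x,y)}(g)$ (which is justified by the $\C^*$-homogeneity of $g$, the compactness of $\P^{r-1}$, and lower semicontinuity of pointwise minimal exponents — this is exactly Claim~\ref{claim_thm3_intro} applied to $X_0=\{x\}$), and invoke the hypersurface results. One caveat you gloss over: for ${\rm ii_1)}$ the openness statement \emph{without} a section is not in \cite{MP1}; the paper first establishes it for hypersurfaces as Lemma~\ref{lem_thm3_intro}, and that extra step (an induction on $\dim T$ combined with resolution of singularities of $T$) is needed before the transfer you describe can be made.

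Part (iii), however, has a genuine gap. The inequality you invoke — $\widetilde{\alpha}_{(x,y_0)}(g)\leq a_E(Y)/\ord_E(g)$ for \emph{any} prime divisor $E$ over $Y$ — is not a result of \cite{MP1} and is in fact false. Take any reduced hypersurface $V(g)$ with $\widetilde{\alpha}(g)>1$ (e.g.\ $g=u_1^2+\cdots+u_4^2$ with $\widetilde{\alpha}(g)=2$) and let $E$ be a component of $V(g)$ itself: then $a_E(Y)=1$, $\ord_E(g)=1$, and the ratio is $1<\widetilde{\alpha}(g)$. Even restricting to exceptional divisors of weighted blow-ups at a point, the corresponding weighted-multiplicity bound fails: for $g=u_1+u_2^2$ with weights $(2,1)$ one gets $\ord(g)=2$, $\sum a_i=3$, ratio $3/2$, yet $\widetilde{\alpha}(g)=\infty$. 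The underlying obstruction is that lower semicontinuity of the minimal exponent runs in the \emph{wrong} direction for a weighted degeneration to give an upper bound, so there is no general analogue of the $\lct$-computation via log discrepancies. The paper's proof sidesteps this by using ${\rm ii_2)}$ — which has just been proved — applied to the projection $U\to\A^r\smallsetminus\{0\}$: for a general $\lambda$, $\widetilde{\alpha}(g_\lambda)\geq\widetilde{\alpha}(g\vert_U)=\widetilde{\alpha}(Z)$, and $g_\lambda=\sum_i\lambda_if_i$ lives on the $n$-dimensional $X$ with ${\rm mult}_x(g_\lambda)\geq k$, so the ordinary multiplicity bound \cite[Theorem~E(3)]{MP1} gives $\widetilde{\alpha}_x(g_\lambda)\leq n/k$. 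Thus the dimension drop from $n+r$ to $n$ comes for free from the generic-fiber restriction, and no weighted log-discrepancy estimate is needed. You should replace the weighted-blow-up argument with this one.
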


Another main result of the paper says that the minimal exponent controls the behavior of the Hodge filtration on local cohomology.
Recall that if $Z$ is locally a complete intersection of pure codimension $r$, the only nontrivial local cohomology of the structure sheaf is $\cH_Z^r(\cO_X)$,
and if $Z$ is defined by $f_1,\ldots,f_r$, then
$$\cH_Z^r(\cO_X)=\cO_X[1/f_1\cdots f_r]/\sum_{i=1}^r\cO_X[1/f_1\cdots\widehat{f_i}\cdots f_r].$$
The Hodge filtration on this mixed Hodge module was studied in \cite{MP2}. There is another natural filtration, the \emph{order filtration} (or \emph{Ext filtration}) given by
$$O_k\cH^r_Z(\cO_X)=\big\{u\in\cH_Z^r(\cO_X)\mid I_Z^{k+1}u=0\big\},$$
where $I_Z$ is the ideal defining $Z$. For every $k\geq 0$ we have $F_k\cH^r_Z(\cO_X)\subseteq O_k\cH^r_Z(\cO_X)$ 
and if equality holds for $k=p$, then it holds for all $k$, with $0\leq k\leq p$.
The \emph{singularity level} $p(Z)$ of the Hodge filtration on $\cH^r_Z(\cO_X)$ is 
$$p(Z)={\rm sup}\big\{k\geq 0\mid F_k\cH^r_Z(\cO_X)=O_k\cH^r(\cO_X)\big\},$$
with the convention that this is $-1$ if the above set is empty. With this notation, we prove

\begin{thm}\label{thm1_intro}
If $X$ is a smooth, irreducible, complex algebraic variety and
$Z$ is a local complete intersection closed subscheme of $X$, of pure codimension $r$, then
$$p(Z)=\max\big\{\lfloor\widetilde{\alpha}(Z)\rfloor-r,-1\big\}.$$
\end{thm}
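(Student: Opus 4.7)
The plan is to deduce this from Theorem~\ref{thm2_intro} together with the hypersurface analogue of the statement, namely Saito's result~(\ref{equation1_intro}).

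Working locally, I would assume $Z$ is defined by a regular sequence $f_1,\ldots,f_r$, and introduce $Y = X \times \A^r$, $g = \sum_{i=1}^r f_i y_i$, $U = X \times (\A^r \setminus \{0\})$, and $Z_g := V(g) \cap U$. By Theorem~\ref{thm2_intro}, $\widetilde{\alpha}(Z) = \widetilde{\alpha}(g|_U)$, and applying~(\ref{equation1_intro}) to the hypersurface $Z_g \subset U$ gives $p(Z_g) = \max\{\lfloor \widetilde{\alpha}(g|_U)\rfloor - 1,\,-1\}$. Thus the statement reduces to the comparison, for each $k \geq 0$,
\begin{equation*}
F_k \cH^r_Z(\cO_X) = O_k \cH^r_Z(\cO_X) \iff F_{k+r-1}\cH^1_{Z_g}(\cO_U) = O_{k+r-1}\cH^1_{Z_g}(\cO_U),
\end{equation*}
since this identifies $p(Z) \geq k$ with $p(Z_g) \geq k + r - 1$, which is in turn equivalent to $\widetilde{\alpha}(Z) \geq k + r$, exactly the content of the theorem.

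To prove this comparison, I would describe all four filtrations in terms of $V$-filtrations: the Hodge and order filtrations on $\cH^r_Z(\cO_X)$ through the $V$-filtration $V^\bullet B_{\bff}$ associated to the graph embedding of $\bff$, and the Hodge and order filtrations on $\cH^1_{Z_g}(\cO_U)$ through Saito's descriptions involving $V^\bullet B_g$ and the microlocal variant $V^\bullet \widetilde{B}_g$. In both settings, the order filtrations are generated by powers of the $\partial_t$ operators acting on the delta generators. The bridge between the two descriptions is provided by Theorem~\ref{thm_V_filtration} of this paper, which expresses $V^\bullet B_{\bff}$ explicitly in terms of $V^\bullet \widetilde{B}_g$. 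Chasing through this identification should translate the condition on one side into the corresponding condition on the other, with the predicted shift of $r - 1$.

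The main obstacle will be carrying out this translation while correctly keeping track of the indexing shift. The shift by $r - 1$ reflects the $r - 1$ smooth transverse directions introduced when passing to $U$: since $U$ has dimension $n + r$ and $V(g|_U)$ is a hypersurface, its Hodge data incorporates these extra directions, whereas $Z$ sits in codimension $r$ inside $X$. A subsidiary issue is the passage from $V(g) \subset Y$ to $Z_g = V(g)\cap U$, but this is already absorbed by Theorem~\ref{thm2_intro}: the contributions along $\{y = 0\}$ do not interfere, since $\widetilde{\alpha}(Z)$ depends only on the behavior of $g$ away from that locus.
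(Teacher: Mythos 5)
Your plan has a real gap: the step you describe as "describe the Hodge and order filtrations on $\cH^r_Z(\cO_X)$ through the $V$-filtration $V^\bullet B_{\bff}$" is not routine bookkeeping — it is precisely the content of Theorem~\ref{thm1.5_intro}, which is one of the main results of the paper and rests on the delicate filtered isomorphism
\[
V^r(B_{\mathbf f})/(t_1,\ldots,t_r)V^{r-1}(B_{\mathbf f}) \;\simeq\; \cH^r_Z(\cO_X)
\]
proved in \cite{CD}. (The hypersurface version is built into Saito's definitions; the several-functions version is not, and this is exactly why \cite{CD} is cited.) Once one has Theorem~\ref{thm1.5_intro} in hand, the detour through $Z_g$, Theorem~\ref{thm2_intro}, and Saito's hypersurface result~(\ref{equation1_intro}) becomes superfluous: one can directly show the equivalence $F_kB_{\bff}\subseteq V^rB_{\bff} \Leftrightarrow F_k\cH^r_Z(\cO_X)=O_k\cH^r_Z(\cO_X)$ and appeal to the definition of $\widetilde{\alpha}(Z)$, which is what the paper does. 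Your proposed intermediate comparison
\[
F_k \cH^r_Z(\cO_X) = O_k \cH^r_Z(\cO_X) \iff F_{k+r-1}\cH^1_{Z_g}(\cO_U) = O_{k+r-1}\cH^1_{Z_g}(\cO_U)
\]
is not a shortcut: $\cH^r_Z(\cO_X)$ and $\cH^1_{Z_g}(\cO_U)$ live on varieties of different dimension with no natural map between them, so the only way to compare their Hodge/order filtrations is through the $V$-filtration translation — i.e., you would be reproving both Theorem~\ref{thm1.5_intro} and (a consequence of) Theorem~\ref{thm_V_filtration} to establish it, rather than using them.

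A second, smaller issue you do not address: even granting Theorem~\ref{thm1.5_intro}, the implication $F_k\cH^r_Z(\cO_X)=O_k\cH^r_Z(\cO_X) \Rightarrow F_kB_{\bff}\subseteq V^rB_{\bff}$ (the "if" direction) is not automatic. The paper's argument here is an induction on $k$ using the surjectivity from Theorem~\ref{thm1.5_intro} together with the fact that $\mathrm{Gr}^O_{k+1}\cH^r_Z(\cO_X)$ is a \emph{free} $\cO_Z$-module with an explicit basis, so one can read off that the coefficients $h_{\alpha,\beta}-\delta_{\alpha\beta}$ lie in the ideal of $Z$. This freeness is specific to the complete intersection structure and is invisible from the $V$-filtration side alone; any route, including the one through $Z_g$, would have to confront it somewhere.
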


In particular, by combining Theorems~\ref{thm1_intro} and \ref{thm3_intro}, we see that the invariant $p(Z)$ satisfies analogous properties
to those in Theorem~\ref{thm3_intro}. This was already shown in \cite[Section~9]{MP2} by different methods. 
For an application of Theorem~\ref{thm1_intro} to an Inversion-of-Adjunction type statement, see Corollary~\ref{cor_IA}.
The main ingredient in the proof of Theorem~\ref{thm1_intro} is the description of the Hodge filtration on $\cH_Z^r(\cO_X)$ in terms of the
$V$-filtration on $B_{\mathbf f}$. This relies on the interplay between the Hodge filtration and the $V$-filtration for filtered $\cD$-modules 
that underlie mixed Hodge modules. In the case of one function, this is built into the definition of Hodge modules. 
However, the case of several functions is more subtle and has only recently been elucidated in \cite{CD}. Using these results, we give
the following description of the Hodge filtration on local cohomology:

\begin{thm}\label{thm1.5_intro}
If $X$ is a smooth, irreducible, complex algebraic variety and $f_1,\ldots,f_r\in\cO_X(X)$ define a complete intersection closed subscheme $Z$
of pure codimension $r$, then for every $p\geq 0$, we have
$$F_p\cH^r_Z(\cO_X)=\left\{\left[\sum_{|\alpha|\leq p}\frac{\alpha_1!\cdots\alpha_r! h_{\alpha}}{f_1^{\alpha_1+1}\cdots f_r^{\alpha_r+1}}\right]\mid
\sum_{|\alpha|\leq p}h_{\alpha}\partial_t^{\alpha}\delta_{\mathbf f}\in V^rB_{\mathbf f}\right\}.$$
\end{thm}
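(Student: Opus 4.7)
The plan is to establish Theorem~\ref{thm1.5_intro} by realizing $\cH^r_Z(\cO_X)$ as a $\cD_X$-module quotient of $B_{\mathbf f}$ via the graph embedding and Kashiwara's equivalence, and then computing the induced Hodge filtration using the multi-function $V$-filtration results of \cite{CD}.

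First, I would construct the $\cD_X$-linear surjection
\[\Phi\colon B_{\mathbf f}\twoheadrightarrow \cH^r_Z(\cO_X),\qquad h\,\partial_t^\alpha\delta_{\mathbf f}\longmapsto \left[\frac{\alpha_1!\cdots\alpha_r!\,h}{f_1^{\alpha_1+1}\cdots f_r^{\alpha_r+1}}\right].\]
This is well-defined on the $\cO_X$-basis $\{\partial_t^\alpha\delta_{\mathbf f}\}$ of $B_{\mathbf f}$; $\cD_X$-linearity follows by matching $\partial_{x_i}\partial_t^\alpha\delta_{\mathbf f}=-\sum_j\partial_{x_i}(f_j)\,\partial_t^{\alpha+e_j}\delta_{\mathbf f}$ against the derivative of $1/\prod_j f_j^{\alpha_j+1}$; surjectivity is clear since every class $[h/f^{\alpha+1}]$ is hit. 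Under Kashiwara's equivalence, $\Phi$ is realized as the natural quotient $B_{\mathbf f}\twoheadrightarrow\cH^r_T(B_{\mathbf f})$ of $\cD_Y$-modules, where $T=X\times\{0\}\subset Y$, and this map underlies a morphism of mixed Hodge modules. Consequently the Hodge filtration on $\cH^r_Z(\cO_X)$ is the quotient filtration induced by $\Phi$ from the standard graph-embedding filtration $F_pB_{\mathbf f}=\sum_{|\alpha|\leq p}\cO_X\,\partial_t^\alpha\delta_{\mathbf f}$.

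Second, I would appeal to \cite{CD} to describe this quotient filtration through the $V$-filtration of $B_{\mathbf f}$ along $T$. The needed assertion is that
\[F_p\,\cH^r_T(B_{\mathbf f})\ =\ \Phi\bigl(F_pB_{\mathbf f}\cap V^rB_{\mathbf f}\bigr),\]
with the exponent $r$ reflecting the codimension of $T$ in $Y$. In the codimension-one case this recovers Saito's classical description of $F_p\cH^1_Z(\cO_X)$ via $V^1B_f$ in \cite{Saito-MLCT}. Substituting the explicit form of $F_pB_{\mathbf f}$ and translating across $\Phi$ gives exactly the formula in the theorem.

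The main obstacle is the second step: extracting from \cite{CD} the strictness assertion that every element of $F_p\cH^r_Z(\cO_X)$ admits a preimage in $B_{\mathbf f}$ lying simultaneously in $F_pB_{\mathbf f}$ and in $V^rB_{\mathbf f}$. This is the substantive bistrictness content for the pair $(F_\bullet, V^\bullet)$ on a pushforward from a graph embedding in several variables, and has no analogue in the single-function Hodge module formalism. Secondary points to verify are that the identification $\iota_+\cH^r_Z(\cO_X)\cong\cH^r_T(B_{\mathbf f})$ respects Hodge filtrations, and that the vanishing of lower local cohomologies $\cH^{<r}_T(B_{\mathbf f})$ (coming from the fact that $\iota(Z)$ has codimension $r$ in $\iota(X)$) ensures no additional terms contaminate the formula.
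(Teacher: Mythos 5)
Your explicit map $\Phi$ (in the paper's notation, $\tau$ followed by the projection to local cohomology) and its $\cD_X$-linearity and surjectivity are indeed the same ingredients the paper uses. You also correctly identify the needed input from \cite{CD}, namely a filtered identification of $\cH^r_Z(\cO_X)$ with a subquotient of $B_{\mathbf f}$ cut out by $V^r$. But there is a genuine gap in the middle of your argument, and it is the part you dismiss as ``secondary.''

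You claim that ``under Kashiwara's equivalence, $\Phi$ is realized as the natural quotient $B_{\mathbf f}\twoheadrightarrow\cH^r_T(B_{\mathbf f})$,'' and hence that $\Phi$ underlies a morphism of mixed Hodge modules, so that the Hodge filtration on the target is the image filtration. For $r>1$ there is no such natural quotient. The local cohomology $\cH^r_T(M)$ is a quotient of $M[1/t_1\cdots t_r]$, not of $M$; on the Kashiwara side, the only ``natural'' map $\cO_X\to\cH^r_Z(\cO_X)$ is the one factoring through $\cO_X\to\cO_X[1/f_1\cdots f_r]\to\cH^r_Z(\cO_X)$, and this composite is \emph{zero} (since $\cO_X\subseteq\cO_X[1/f_2\cdots f_r]$, say). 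Your $\Phi$ is a perfectly good $\cD_X$-linear map, but $\cD_X$-linearity alone does not make it a morphism of mixed Hodge modules, and Kashiwara's equivalence does not manufacture one for you. So the key assertion
$$F_p\,\cH^r_Z(\cO_X)=\Phi\bigl(F_pB_{\mathbf f}\cap V^rB_{\mathbf f}\bigr)$$
is exactly what needs to be proved, and the route you propose begs the question.

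The paper closes this gap with a rigidity argument that your proposal does not anticipate. It first checks directly that $\tau$ annihilates $\sum_i(s_i+1)B_{\bff}^+$ and sends $t_iB_{\bff}$ into $\sum_i\cO_X[1/f_1\cdots\widehat{f_i}\cdots f_r]$, so that it descends to a $\cD_X$-linear surjection
$$\overline{\tau}\colon V^rB_{\bff}/\textstyle\sum_i t_iV^{r-1}B_{\bff}\longrightarrow\cH^r_Z(\cO_X).$$
The surjectivity of $\overline{\tau}$ is itself a short $V$-filtration argument (using that $\prod(s+\alpha_i)$ moves elements into $V^r$), not an immediate observation. Then the paper invokes \cite[Theorems~1.1 and 1.2(b)]{CD} for the \emph{abstract} filtered isomorphism $\sigma$ from the same source to $\cH^r_Z(\cO_X)$, and the punchline is Lemma~\ref{lem_endom}: for a connected lci $Z$ of pure codimension $r$, $\End_{\cD_X}(\cH^r_Z(\cO_X))=\C$ (proved via Riemann--Hilbert and the perversity of $\underline{\C}_{Z^{\rm an}}[n-r]$). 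This forces $\overline{\tau}\circ\sigma^{-1}$ to be a nonzero scalar, hence $\overline{\tau}$ to be a \emph{filtered} isomorphism. Without this rigidity step you have no way to transfer the Hodge-theoretic statement from the abstract isomorphism of \cite{CD} to your explicit map $\Phi$, and your proof does not go through.
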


One interesting question that remains open is the precise relation between $\widetilde{\alpha}(Z)$ and the Bernstein-Sato polynomial of $Z$. We recall that for an arbitrary closed subscheme of the smooth variety $X$,
one can define a Bernstein-Sato polynomial $b_Z(s)\in\Q[s]$, extending the classical notion from the case of hypersurfaces (see \cite{BMS}). As in the classical case, all
its roots are negative rational numbers, with the largest root being $-\lct(X,Z)$.
It is easy to see that if $Z$
is a (nonempty) local complete intersection of pure codimension $r$, then $(s+r)$ divides $b_Z(s)$, see Proposition~\ref{prop_r} below. By analogy with the definition of the minimal exponent in the case of hypersurfaces, we define $\widetilde{\gamma}(Z)$ to be the negative of the largest root of $b_Z(s)/(s+r)$ (with the convention that this is infinite if $b_Z(s)/(s+r)=1$). 

\begin{question}\label{question_gamma}
If $Z$ is locally a complete intersection in the smooth irreducible variety $X$, of pure codimension $r$, do we have $\widetilde{\alpha}(Z)=\widetilde{\gamma}(Z)$? 
\end{question}

Note that in light of Theorem~\ref{thm1_intro}, a positive answer to Question~\ref{question_gamma} would provide a positive answer to
\cite[Conjecture~9.11]{MP2}, relating the Hodge filtration on $\cH_Z^r(\cO_X)$ and the invariant $\widetilde{\gamma}(Z)$.
We can prove the following relation between the two invariants:

\begin{thm}\label{thm4_intro}
With the notation in Question~\ref{question_gamma}, we have $\widetilde{\alpha}(Z)\geq\widetilde{\gamma}(Z)$ and
$$\min\big\{\widetilde{\alpha}(Z),r+1\big\}=\min\big\{\widetilde{\gamma}(Z),r+1\big\}.$$
\end{thm}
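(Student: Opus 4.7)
The plan is to reduce the comparison of $\widetilde{\alpha}(Z)$ and $\widetilde{\gamma}(Z)$ to roots of Bernstein-Sato polynomials on $Y = X \times \A^r$ and its open subset $U = X \times (\A^r \smallsetminus \{0\})$, where $g = \sum_{i=1}^r f_i y_i$. By Theorem~\ref{thm2_intro} and the identity $b_Z(s) = b_g(s)/(s+1)$ from \cite{Mustata} (extended in this paper), both invariants can be written in terms of roots of reduced $b$-functions:
\[
\widetilde{\alpha}(Z) = -\max\{\text{roots of }b_{g\vert_U}(s)/(s+1)\}, \qquad \widetilde{\gamma}(Z) = -\max\{\text{roots of } b_g(s)/[(s+1)(s+r)]\}.
\]

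For the inequality $\widetilde{\alpha}(Z) \geq \widetilde{\gamma}(Z)$, I would prove the polynomial divisibility $b_{g\vert_U}(s)/(s+1) \mid b_g(s)/[(s+1)(s+r)]$, which immediately yields the desired comparison of largest roots. Since $b_{g\vert_U}(s) \mid b_g(s)$, the divisibility reduces to a multiplicity comparison at each root $\alpha$, and for $\alpha \neq -r$ it follows for free. The only nontrivial case is $\alpha = -r$, where one must show that the multiplicity of $-r$ in $b_g$ strictly exceeds its multiplicity in $b_{g\vert_U}$. The key input is the local \Mustata relation $b_{g,(x_0,0)}(s) = (s+1)\, b_{Z,x_0}(s)$ at a point $(x_0,0) \in Y \setminus U$ with $x_0 \in Z$, combined with Proposition~\ref{prop_r} giving $(s+r) \mid b_{Z,x_0}(s)$; this provides an extra $(s+r)$-factor in $b_g$ arising from outside $U$.

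For the minimum equality, given the first inequality, it suffices to show that $\widetilde{\gamma}(Z) < r+1$ forces $\widetilde{\alpha}(Z) \leq \widetilde{\gamma}(Z)$. Letting $\beta := -\widetilde{\gamma}(Z) > -(r+1)$, the task is to realize $\beta$ as a root of $b_{g\vert_U}(s)/(s+1)$; since $\beta$ is already a root of $b_g(s)/[(s+1)(s+r)]$, one must rule out that all local realizations of $\beta$ lie at points of $Y \setminus U$. Under the hypothesis $\widetilde{\gamma}(Z) < r+1$, this follows because the local \Mustata description forces the $Y \setminus U$-contributions to produce $\beta$ only through the $(s+r)$-factor, which has already been removed when passing to $\widetilde{\gamma}$.

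The main obstacle in both parts is the multiplicity bookkeeping at the root $-r$: local $b$-functions of $\{g=0\}$ over singular points of $Z$ can in principle contribute $-r$ with various multiplicities at points both in $U$ and in $Y \setminus U$, and precisely isolating the ``generic'' $(s+r)$-factor tied to the codimension $r$ of $Z$ from other contributions will likely require the microlocal $V$-filtration description (Theorem~\ref{thm_V_filtration}) relating $V^\bullet B_{\mathbf f}$ to $V^\bullet \widetilde{B}_g$, which provides the finer tool needed to execute the separation.
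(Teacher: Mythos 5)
Your translation of both invariants into $b$-functions of $g$ is sound: Theorem~\ref{thm2_intro} together with $\widetilde{b}_{g}(s)=b_g(s)/(s+1)=b_Z(s)$ gives exactly the two formulas you write down. The trouble begins with the proposed divisibility $b_{g\vert_U}(s)/(s+1)\mid b_g(s)/[(s+1)(s+r)]$. Since $b_{g\vert_U}(s)/(s+1)$ already divides $b_Z(s)=b_g(s)/(s+1)$, this divisibility amounts to showing that the multiplicity of $-r$ in $b_{g\vert_U}(s)/(s+1)$ is \emph{strictly} less than its multiplicity in $b_Z(s)$. Your observation that $(s+1)(s+r)\mid b_{g,(x_0,0)}(s)=(s+1)b_{Z,x_0}(s)$ only reproduces the non-strict inequality you already have for free: global $b$-functions are least common multiples of local ones, and if some point $(x_1,y_1)\in U$ has $\mathrm{mult}_{-r}\,b_{g,(x_1,y_1)}(s)=m\geq 1$, then by $\C^\times$-homogeneity in the $y$-variables and specialization one only gets $m\leq\mathrm{mult}_{-r}\,b_{Z,x_1}(s)$, with no reason for the factor $(s+r)$ at $(x_1,0)$ to be ``extra'' rather than the same factor seen at $(x_1,y_1)$. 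That strictness is exactly the hard content of the theorem (it contains the implication $\widetilde{\gamma}(Z)>r\Rightarrow\widetilde{\alpha}(Z)>r$, i.e.\ the hard direction of Corollary~\ref{cor_intro}), and the proposal does not supply it: your closing remark that ``the microlocal $V$-filtration description\ldots provides the finer tool needed'' correctly names where the missing argument would have to come from, but leaves all the work undone. The sketch for the min equality has the same problem -- it asserts, without proof, that a root $\beta$ of $b_Z(s)/(s+r)$ with $\beta>-(r+1)$ must be realized at a point of $U$.

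For comparison, the paper never proves or uses a global divisibility of this kind. It splits the theorem into two implications: (i) $\widetilde{\gamma}(Z)\geq r-1+q+\gamma\Rightarrow\widetilde{\alpha}(Z)\geq r-1+q+\gamma$, established by an induction on $q$ that combines a local $b$-function estimate placing $(s+r)u\in V^{r+\gamma}B_{\bff}$ for $u=\partial_t^\beta\delta_{\bff}$, with the Chen--Dirks surjectivity of $(t_1,\ldots,t_r)\colon(F_qV^{r-1+\gamma}B_{\bff})^{\oplus r}\to F_qV^{r+\gamma}B_{\bff}$ and the regularity of $(t_1,\ldots,t_r)$ on $F_qB_{\bff}$ (Lemma~\ref{lem_reg_seq}); and (ii) $\widetilde{\alpha}(Z)\geq r+\gamma'\Rightarrow\widetilde{\gamma}(Z)\geq r+\gamma'$, established via Theorem~\ref{thm_V_filtration} by noting that $\widetilde{\alpha}(Z)\geq r+\gamma'$ forces $(s+r)\delta_g=\sum_i\partial_{y_i}y_i\delta_g\in V^{r+\gamma'}\widetilde{B}_g$ and extracting the divisibility $\widetilde{b}_{\delta_g}(s)\mid(s+r)\widetilde{b}_{(s+r)\delta_g}(s)$. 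These $\cD$-module-theoretic ingredients are what produce the strictness your root-counting argument cannot.
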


We recall that by \cite[Theorem~4]{BMS}, under the assumptions of Theorem~\ref{thm4_intro}, the subscheme $Z$ has rational singularities if and only if
$\widetilde{\gamma}(Z)>r$. By combining Theorems~\ref{thm1_intro} and \ref{thm4_intro}, we obtain the following result, which gives a positive answer to 
\cite[Conjecture~8.4]{MP2}.

\begin{cor}\label{cor_intro}
If $X$ is a smooth, irreducible variety and
$Z$ is a local complete intersection closed subscheme of $X$, of pure codimension $r$, 
then $Z$ has rational singularities if and only if $\widetilde{\alpha}(Z)>r$. In particular, if
$F_1\cH^r_Z(\cO_X)=O_1\cH^r_Z(\cO_X)$, then
$Z$ has rational singularities. 
\end{cor}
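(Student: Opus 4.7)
The plan is to deduce the corollary formally from the three ingredients already in hand: Theorem~\ref{thm1_intro} relating $p(Z)$ to $\widetilde{\alpha}(Z)$, Theorem~\ref{thm4_intro} comparing $\widetilde{\alpha}(Z)$ and $\widetilde{\gamma}(Z)$ above the threshold $r+1$, and \cite[Theorem~4]{BMS} characterizing rational singularities of local complete intersections via $\widetilde{\gamma}(Z)>r$. No new $\cD$-module computation is needed; the argument is a matching of thresholds.

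For the main equivalence, I would proceed as follows. By Theorem~\ref{thm4_intro} we have
\[
\min\bigl\{\widetilde{\alpha}(Z),r+1\bigr\}=\min\bigl\{\widetilde{\gamma}(Z),r+1\bigr\}.
\]
Since $r<r+1$, the condition $\widetilde{\alpha}(Z)>r$ is equivalent to $\min\{\widetilde{\alpha}(Z),r+1\}>r$, and similarly $\widetilde{\gamma}(Z)>r$ is equivalent to $\min\{\widetilde{\gamma}(Z),r+1\}>r$. Hence $\widetilde{\alpha}(Z)>r$ if and only if $\widetilde{\gamma}(Z)>r$, and by \cite[Theorem~4]{BMS} this last condition is in turn equivalent to $Z$ having rational singularities.

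For the second assertion, suppose that $F_1\cH^r_Z(\cO_X)=O_1\cH^r_Z(\cO_X)$. By definition this means $p(Z)\geq 1$, so Theorem~\ref{thm1_intro} forces
\[
\lfloor\widetilde{\alpha}(Z)\rfloor-r\geq 1,
\]
i.e.\ $\widetilde{\alpha}(Z)\geq r+1>r$. Applying the equivalence proved in the previous paragraph, $Z$ has rational singularities.

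There is essentially no obstacle: the whole point of the corollary is that the heavy lifting has been done in Theorems~\ref{thm1_intro} and~\ref{thm4_intro}, and what remains is to observe that the inequality $\widetilde{\alpha}(Z)>r$ happens strictly below the cutoff $r+1$ at which Theorem~\ref{thm4_intro} only guarantees agreement of $\widetilde{\alpha}$ and $\widetilde{\gamma}$ up to the minimum with $r+1$. The only thing to be careful about is this matching of thresholds, namely that $r<r+1$ so the min-truncation does not lose information at the level $r$; once this is noted, both statements follow immediately.
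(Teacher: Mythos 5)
Your proof is correct and follows the same route as the paper: the main equivalence comes from combining Theorem~\ref{thm4_intro} with \cite[Theorem~4]{BMS}, and the second assertion from Theorem~\ref{thm1_intro}. Your explicit threshold-matching argument (that passing to $\min\{\cdot,r+1\}$ loses no information at the level $r$ because $r<r+1$) makes precise what the paper leaves implicit, but the content is identical.
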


We note that since the first version of this paper was written, a positive answer to Question~\ref{question_gamma} has been given in \cite{Dirks}.

\medskip

\noindent {\bf Outline of the paper}. In Section~2 we review the basic facts about $V$-filtrations and $b$-functions. The following section is devoted to the result
relating the $V$-filtration associated to $f_1,\ldots,f_d$ and the microlocal $V$-filtration associated to $\sum_{i=1}^df_iy_i$. In Section~4 we introduce the minimal exponent
of a local complete intersection subscheme, prove the description in Theorem~\ref{thm2_intro}, as well as various general properties of this invariant, including the
ones in Theorem~\ref{thm3_intro}. In Section~5 we relate the minimal exponent to the Hodge filtration on local cohomology, proving Theorems~\ref{thm1_intro}
and \ref{thm1.5_intro}.
Finally, in the last section, we discuss the connection with the Bernstein-Sato polynomial and prove Theorem~\ref{thm4_intro}.

\noindent {\bf Acknowledgments}. We would like to thank Mihnea Popa and Christian Schnell for many discussions related to the subject of this work. 
We are also grateful to Karl Schwede for providing some useful references and to the anonymous referee for the comments on a previous version of this article.

\section{Review of $V$-filtrations}\label{section_review}

In this section we recall the definition and some basic properties of $V$-filtrations. 
For details, we refer to \cite{Kashiwara}, \cite[Section~1]{BMS}, and \cite[Section~3.1]{Saito-MHP}.
Let $X$ be a fixed smooth, irreducible, complex algebraic variety.
Recall that $\cD_X$ denotes the sheaf of differential operators on $X$. In this paper all $\cD$-modules will be \emph{left} $\cD$-modules.
For general facts about $\cD$-modules, we refer to \cite{HTT}. 

Given nonzero regular functions $f_1,\ldots,f_d\in\cO_X(X)$, we denote by $\fra\subseteq\cO_X$ the ideal $(f_1,\ldots,f_d)$ and by $Z$
the closed subscheme of $X$ defined by $\fra$.
We consider the graph embedding 
$$\iota\colon X\hookrightarrow W=X\times \A^d,\quad \iota(x)=\big(x,f_1(x),\ldots,f_d(x)\big)$$
and the $\cD$-module theoretic push-forward $B_{\mathbf f}=\iota_+(\cO_X)$ (we denote by ${\mathbf f}$ the $d$-tuple $(f_1,\ldots,f_d)$). 
We denote the standard coordinates on $\A^d$ by $t_1,\ldots,t_d$ and use multi-index notation, so for $\beta=(\beta_1,\ldots,\beta_d)\in\Z_{\geq 0}^d$,
we put $t^{\beta}=t_1^{\beta_1}\cdots t_d^{\beta_d}$ and $\partial_t^{\beta}=\partial_{t_1}^{\beta_1}\cdots\partial_{t_d}^{\beta_d}$. 
We also put $\beta!=\prod_i\beta_i!$ and $|\beta|=\sum_i\beta_i$. Finally, we consider $s_i=-\partial_{t_i}t_i$ for $1\leq i\leq d$ and $s=\sum_{i=1}^ds_i$. 

It is convenient to consider $B_{\mathbf f}$ as an $\cR$-module on $X$, where 
$\cR=\cD_X\langle t_1,\ldots,t_d,\partial_{t_1},\ldots\partial_{t_d}\rangle$.
The general description
of $\cD$-module push-forward via closed immersions gives
$$B_{\mathbf f}=\bigoplus_{\beta\in\Z_{\geq 0}^d}\cO_X\partial_t^{\beta}\delta_{\mathbf f},$$
where the actions of $\cO_X$ and $\partial_{t_i}$ are the obvious ones, while the actions of $D\in {\rm Der}_{\C}(\cO_X)$ and of the $t_i$ are given by
\begin{equation}\label{eq_action_B_f}
D\cdot h\partial_t^{\beta}\delta_{\mathbf f}=D(h)\partial_t^{\beta}\delta_{\mathbf f}-\sum_{i=1}^dD(f_i)h\partial_t^{\beta+e_i}\delta_{\mathbf f}\quad\text{and}\quad
t_i\cdot h\partial_t^{\beta}\delta_{\mathbf f}=f_ih\partial_t^{\beta}\delta_{\mathbf f}-\beta_ih\partial_t^{\beta-e_i}\delta_{\mathbf f},
\end{equation}
where $e_1,\ldots,e_d$ is the standard basis of $\Z^d$. 
We will also consider on $B_{\bff}$ the \emph{Hodge filtration}\footnote{It is often the case that one shifts this filtration so that what we denote by $F_pB_{\bff}$
is considered to be $F_{p+d}B_{\bff}$.} given by
$$F_pB_{\bff}=\bigoplus_{|\beta|\leq p}\cO_X\partial_t^{\beta}\delta_{\bff}.$$

It is sometimes convenient to consider the larger $\cR$-module $B_{\mathbf f}^+$ corresponding to the push-forward
$\iota_+\big(\cO_X[1/f_1\cdots f_d]\big)$, namely
$$B_{\mathbf f}^+=\bigoplus_{\beta\in\Z_{\geq 0}^d}\cO_X[1/f_1\cdots f_d]\partial_t^{\beta}\delta_{\mathbf f}.$$
We may also write the elements of $B_{\mathbf f}^+$ in terms of the operators $s_1,\ldots,s_d$, as follows. 
Let us put $Q_m(x)=(-1)^m m!{x\choose m}=x(x-1)\cdots (x-m+1)\in\C[x]$ for $m\in\Z_{\geq 0}$ and 
$$Q_{\beta}(s_1,\ldots,s_d)=\prod_{i=1}^dQ_{\beta_i}(s_i)\in {\mathbf C}[s_1,\ldots,s_d]\quad\text{for}\quad\beta=(\beta_1,\ldots,\beta_d)\in\Z_{\geq 0}^d.$$ 
It follows from 
Lemma~\ref{lem0_appendix} that we can write $\partial_{t_i}^{\beta_i}=Q_{\beta_i}(s_i)t_i^{-\beta_i}$ and since $t_i^{-\beta_i}\delta_{\mathbf f}=\tfrac{1}{f_i^{\beta_i}}\delta_{\mathbf f}$
in $B_{\mathbf f}^+$,
we have
\begin{equation}\label{eq_in_terms_s}
\sum_{\beta}h_{\beta}\partial_t^{\beta}\delta_{\mathbf f}=\sum_{\beta}h_{\beta}\tfrac{Q_{\beta}(s_1,\ldots,s_d)}{f_1^{\beta_1}\cdots f_d^{\beta_d}}\delta_{\mathbf f},
\end{equation}
where $h_{\beta}\in\cO_X[1/f_1\cdots f_d]$ for all $\beta\in\Z_{\geq 0}^d$ (with only finitely many nonzero).

Since the polynomials $Q_{\beta}(s_1,\ldots,s_d)$, with $\beta=(\beta_1,\ldots,\beta_d)$ running over $\Z^d_{\geq 0}$, give a basis of ${\mathbf C}[s_1,\ldots,s_d]$ over ${\mathbf C}$, 
it is easy to see that if we put ${\mathbf f}^{\mathbf s}=f_1^{s_1}\cdots f_d^{s_d}$, then 
we have an isomorphism of ${\mathcal R}$-modules 
\begin{equation}\label{eq_Phi}
B_{\mathbf f}^+\simeq \cO_X[1/f_1\cdots f_d,s_1,\ldots,s_d]{\mathbf f}^{\mathbf s},
\end{equation}
that maps $\partial_t^{\beta}\delta_{\mathbf f}$ to $\tfrac{Q_{\beta}(s_1,\ldots,s_r)}{f_1^{\beta_1}\cdots f_r^{\beta_r}}{\mathbf f}^{\mathbf s}$.
Note that a derivation $D\in {\rm Der}_{\C}(\cO_X)$ acts on ${\mathbf f}^{\mathbf s}$
in the expected way:
$$D\cdot {\mathbf f}^{\mathbf s}=\sum_{i=1}^d\frac{s_iD(f_i)}{f_i}{\mathbf f}^{\mathbf s}.$$
We also note that the action of $t_i$ on the left-hand side of (\ref{eq_Phi}) corresponds on the right-hand side to the automorphism that maps $s_i$ to $s_i+1$,
and similarly, the action of $s_i=-\partial_{t_i}t_i$ on the left-hand side of (\ref{eq_Phi}) corresponds on the right-hand side to multiplication by $s_i$. 
We sometimes tacitly use this isomorphism to denote an element of $B_{\mathbf f}^+$ by $P(s_1,\ldots,s_d){\mathbf f}^{\mathbf s}$,
for some $P\in\cO_X[1/f_1\cdots,f_d,s_1,\ldots,s_d]$. Note that it follows from (\ref{eq_in_terms_s}) that if we write 
$P(s_1,\ldots,s_d)=\sum_{\beta}g_{\beta}Q_{\beta}(s_1,\ldots,s_d)$, with $g_{\beta}\in\cO_X[1/f_1\cdots f_d]$, then
$P(s_1,\ldots,s_d){\mathbf f}^{\mathbf s}\in B_{\mathbf f}$ if and only if $g_{\beta}\in\cO_X\cdot \tfrac{1}{f_1^{\beta_1}\cdots f_d^{\beta_d}}$ for all $\beta\in\Z_{\geq 0}^d$.

We now turn to the $V$-filtration.
On $\cR$ we consider the decreasing filtration
$$V^m\cR=\bigoplus_{|\alpha|-|\beta|\geq m}\cD_Xt^{\alpha}\partial_t^{\beta}$$
for $m\in\Z$. It is then clear that
$$V^0\cR=\cD_X\langle t_i, t_i\partial_{t_j}\mid i,j\rangle\quad\text{and}\quad V^1\cR=\sum_{i=1}^dt_i\cdot V^0\cR=\sum_{i=1}^dV^0\cR\cdot t_i.$$

The $V$-filtration on $B_{\bff}$ is a decreasing, exhaustive filtration $(V^{\gamma}B_{\mathbf f})_{\gamma}$ indexed by rational numbers, which is \emph{discrete and left-continuous\footnote{This means that there is a positive integer $\ell$ such that $V^{\gamma}B_{\mathbf f}$ has constant value for all 
$\gamma$ in an interval of the form $\left(\tfrac{i-1}{\ell},\tfrac{i}{\ell}\right]$, with $i\in\Z$.}} and satisfies the following properties:
\begin{enumerate}
\item[i)] Every $V^{\gamma}B_{\bff}$ is a coherent $V^0\cR$-submodule of $B_{\bff}$.
\item[ii)] $t_i\cdot V^{\gamma}B_{\bff}\subseteq V^{\gamma+1}B_{\bff}$ and $\partial_{t_i}\cdot V^{\gamma}B_{\bff}\subseteq V^{\gamma-1}B_{\bff}$ for all $i\leq d$ and $\gamma\in\Q$. 
\item[iii)] $V^1\cR\cdot V^{\gamma}B_{\bff}=V^{\gamma+1}B_{\bff}$ if $\gamma\gg 0$.
\item[iv)] The action of $s+\gamma$ on ${\rm Gr}_V^{\gamma}(B_{\bff})$ is nilpotent for all $\gamma\in\Q$.
\end{enumerate}
Here we put ${\rm Gr}_V^{\gamma}(B_{\bff})=V^{\gamma}B_{\bff}/V^{>\gamma}B_{\bff}$, where $V^{>\gamma}B_{\bff}=\bigcup_{\beta>\gamma}V^{\beta}B_{\bff}$. 

By the theory of Kashiwara \cite{Kashiwara}, extending a result of Malgrange \cite{Malgrange}, there is a unique such $V$-filtration.
Uniqueness follows by easy arguments, while existence is a deeper statement. 

\begin{rmk}\label{restr_complement_Z}
We note that for every $\gamma\in\Q$, we have $V^{\gamma}B_{\bff}\vert_{X\smallsetminus Z}=B_{\bff}\vert_{X\smallsetminus Z}$. 
\end{rmk}

We recall that 
the $V$-filtration on $B_{\bff}$ induces on $\cO_X\simeq\cO_X\delta_{\bff}$ the filtration by the multiplier ideals of $\fra$
(for the definition and basic properties of multiplier ideals, we refer to \cite[Section~9]{Lazarsfeld}). More precisely, it follows
from \cite[Theorem~1]{BMS} that for every $\gamma\in\Q_{>0}$, we have
$$\{h\in\cO_X\mid h\delta_{\bff} \in V^{\gamma}B_{\bff}\}=\cJ(\fra^{\gamma-\epsilon}),\quad \text{for}\quad 0<\epsilon\ll 1.$$
In particular, we have 
\begin{equation}\label{formula_lct}
\delta_{\bff}\in V^{\gamma}B_{\bff} \quad\text{if and only if}\quad \gamma\leq\lct(\fra),
\end{equation}
where $\lct(\fra)$ is the log canonical threshold of $\fra$, characterized as $\min\{\lambda>0\mid \cJ(\fra^{\lambda})\neq\cO_X\}$
(we also denote this by $\lct(X,Z)$).

The existence of $V$-filtrations is closely related to the existence of $b$-functions. Recall that for every $u\in B_{\bff}$, the $b$-function
$b_u(s)$ of $u$ is the monic generator of the ideal
\begin{equation}\label{defi_b_fcn}
\big\{b(s)\in\C[s]\mid b(s)u\in V^1\cR\cdot u\big\}.
\end{equation}
We note that the condition in (\ref{defi_b_fcn}) is equivalent to $b(s)V^0\cR\cdot u\subseteq V^1\cR\cdot u$: this follows easily from Lemmas~\ref{lem2_appendix}
and \ref{lem3_appendix} in the Appendix and the fact that for all $i$ and $j$, we have $t_i\partial_{t_j}\cdot V^0\cR\subseteq V^0\cR$ and $t_i\cdot V^0\cR\subseteq V^1\cR$.
It follows from the results in \cite{Kashiwara} (see also \cite{BMS}) that for every $u\in B_{\bff}$, the ideal (\ref{defi_b_fcn}) is nonzero and thus $b_u(s)$ is well-defined. Moreover,
all its roots are rational. The $V$-filtration can then
 be described as
\begin{equation}\label{eq_formula_V_filtration}
V^{\gamma}B_{\bff}=\big\{u\in B_{\bff}\mid \text{all roots of}\,b_u(s)\,\text{are}\,\leq -\gamma\}.
\end{equation}
In particular, for $u=\delta_{\mathbf f}\in B_{\mathbf f}$, the $b$-function $b_u(s)$ is the Bernstein-Sato polynomial of the ideal $\fra$, introduced and studied in 
\cite{BMS}; this only depends on $\fra$ (not on the choice of $f_1,\ldots,f_d$) and we denote it by $b_Z(s)$.
In the case $d=1$ and $f=f_1$, this is the $b$-function of a hypersurface, introduced independently by Bernstein and Sato; we also denote it by $b_f(s)$.
Note that for any $d$, it follows from (\ref{formula_lct}) and (\ref{eq_formula_V_filtration}) that 
\begin{equation}\label{eq_lct_b_function}
\max\{\lambda\in\Q\mid b_Z(\lambda)=0\}=-\lct(X,Z). 
\end{equation}

\medskip

Suppose now that $d=1$, so we have only one nonzero regular function, that we denote $f$. In this case, Saito introduced in \cite{Saito_microlocal} the 
\emph{microlocal $V$-filtration} associated to $f$, defined as follows. Instead of $B_f$, we consider
$$\widetilde{B}_f:=\bigoplus_{j\in\Z}\cO_X\partial_t^j\delta_f,$$
which is a left module over $\widetilde{\cR}=\cD_X\langle t,\partial_t,\partial_t^{-1}\rangle$. Note that the relation $[\partial_t,t]=1$ implies 
$[\partial_t^{-1},t]=-\partial_t^{-2}$. The action of $\cO_X$ and of $\partial_t$, $\partial_t^{-1}$ on $\widetilde{B}_f$ are the obvious ones,
while the action of derivations and of $t$ are given by the following analogue of (\ref{eq_action_B_f}): for every $D\in {\rm Der}_{\C}(\cO_X)$, $h\in\cO_X$, and $j\in\Z$, we have
\begin{equation}\label{eq_action_B_f2}
D\cdot h\partial_t^j\delta_f=D(h)\partial_t^j\delta_f-hD(f)\partial_t^{j+1}\delta_f\quad\text{and}\quad t\cdot h\partial_t^j\delta_f=fh\partial_t^j\delta_f-jh\partial_t^{j-1}\delta_f.
\end{equation}
The $V$-filtration on $\widetilde{\cR}$ is defined as before: for $m\in\Z$, we have
$$V^m\widetilde{\cR}=\bigoplus_{i-j\geq m}\cD_Xt^i\partial_t^j,$$
where this time $i\in\Z_{\geq 0}$ and $j\in\Z$. It is easy to see that
$$V^0\widetilde{\cR}=\cD_X\langle t,t\partial_t,\partial_t^{-1}\rangle\quad\text{and}\quad V^j\widetilde{\cR}=\partial_t^{-j}\cdot V^0\widetilde{\cR}= V^0\widetilde{\cR}\cdot \partial_t^{-j}\,\,\text{for all}\,\,j\in\Z.$$

The Hodge filtration on $\widetilde{B}_f$ is given by 
$$F_p\widetilde{B}_f=\bigoplus_{i\leq p}\cO_X\partial_t^i\delta_f.$$
On the other hand, the microlocal $V$-filtration on $\widetilde{B}_f$ is given by 
$$V^{\gamma}\widetilde{B}_f=V^{\gamma}B_f\oplus\bigoplus_{j\geq 1}\cO_X\partial_t^{-j}\delta_f\quad\text{for}\quad \gamma\leq 1$$
and 
$$V^{\gamma}\widetilde{B}_f=\partial_t^{-j}V^{\gamma-j}\widetilde{B}_f\quad \text{for}\quad \gamma>1,$$
where $j\in\Z$ is such that $0<\gamma-j\leq 1$. The following properties follow easily from the properties of the
$V$-filtration on $B_f$:
\begin{enumerate}
\item[i)] $V^1\widetilde{\cR}\cdot V^{\gamma}\widetilde{B}_f\subseteq V^{\gamma+1}\widetilde{B}_f$ for all $\gamma\in\Q$.
\item[ii)] Every $V^{\gamma}\widetilde{B}_f$ is a finitely generated $V^0\widetilde{\cR}$-module and it generates $\widetilde{B}_f$
over $\widetilde{\cR}$.
\item[iii)] $s+\gamma$ is nilpotent on ${\rm Gr}_V^{\gamma}(\widetilde{B}_f)$ for every $\gamma\in\Q$. 
\end{enumerate}
A useful property of the microlocal $V$-filtration is that for every $j\in\Z$ and every $\gamma\in\Q$, multiplication by $\partial_t^j$
gives an isomorphism
\begin{equation}\label{eq_mult_partial}
\partial_t^j\colon V^{\gamma}\widetilde{B}_f\simeq V^{\gamma-j}\widetilde{B}_f
\end{equation}
(see \cite[Lemma~2.2]{Saito_microlocal}). 

Given $u\in \widetilde{B}_f$, the microlocal $b$-function $\widetilde{b}_u(s)\in\C[s]$ is the monic generator of the ideal
$$\big\{b(s)\in\C[s]\mid b(s)u\in V^1\widetilde{\cR}\cdot u\big\}$$
(the fact that this ideal is nonzero and all roots of $\widetilde{b}_u(s)$ are rational, follows from the fact that
$V^{\gamma}\widetilde{B}_f\subseteq V^1\widetilde{\cR}\cdot u$ for $\gamma\gg 0$). 
We have the following analogue of (\ref{eq_formula_V_filtration}) describing the microlocal $V$-filtration in terms of
microlocal $b$-functions:
\begin{equation}\label{eq2_formula_V_filtration}
V^{\gamma}\widetilde{B}_{f}=\big\{u\in \widetilde{B}_{f}\mid \text{all roots of}\,\,\widetilde{b}_u(s)\,\,\text{are}\,\leq -\gamma\}.
\end{equation}
An important example is that when
$u=\delta_f\in \widetilde{B}_f$, when we write $\widetilde{b}_f(s)$ for $\widetilde{b}_{\delta_f}(s)$.
If $f$ is not invertible, then it is easy to see that $b_f(s)=b_{\delta_f}(s)$ is divisible by $(s+1)$, and in fact we have
$$\widetilde{b}_f(s)=b_f(s)/(s+1)$$
(see \cite[Proposition~0.3]{Saito_microlocal}). 

Under the same assumption that $f$ is not invertible, the negative of the largest root of $b_f(s)/(s+1)$ is the 
\emph{minimal exponent} $\widetilde{\alpha}(f)$, that we also write as $\widetilde{\alpha}(H)$ if $H$ is the hypersurface defined by $f$.
Here we make the convention that $\widetilde{\alpha}(f)=\infty$ if $b_f(s)/(s+1)=1$. Note that by 
(\ref{eq_lct_b_function}), we have $\min\big\{\widetilde{\alpha}(H),1\big\}=\lct(X, H)$. 
We also recall that by a result of Saito (see \cite[Theorem~0.4]{Saito-B}), we have $\widetilde{\alpha}(H)>1$ if and only if $H$ has rational singularities. 
For a discussion of minimal exponents and basic properties, see \cite[Section~6]{MP1}. One property that is very relevant for us is its connection with the $V$-filtration:
it follows from (\ref{eq2_formula_V_filtration}) and the fact that $b_f(s)/(s+1)=\widetilde{b}_f(s)$ that
\begin{equation}\label{eq_MLCT}
\widetilde{\alpha}(H)={\rm sup}\{\gamma\in\Q_{\geq 0}\mid \delta\in V^{\gamma}\widetilde{B}_f\}.
\end{equation}
In terms of the $V$-filtration on $B_f$, this is equivalent to the fact that for every nonnegative integer $q$ and every rational number
$\gamma\in (0,1]$, we have
\begin{equation}\label{eq_char_min_exp}
\widetilde{\alpha}(H)\geq q+\gamma\quad\text{if and only if}\quad \partial_t^q\delta_f\in V^{\gamma}B_f
\end{equation}
(see \cite{Saito-MLCT}). 

We will also make use of a local version of the minimal exponent of hypersurfaces. If $f\in\cO_X(X)$ and $H$ are as above and $x\in H$, then
$$\widetilde{\alpha}_x(f):=\max_{U\ni x}\widetilde{\alpha}(f\vert_U),$$
where the maximum is over all open neighborhoods $U$ of $x$. With this notation, we have
$$\widetilde{\alpha}(f)=\min_{x\in H}\widetilde{\alpha}_x(f).$$

\bigskip

\section{General $V$-filtrations via microlocal $V$-filtrations along hypersurfaces}\label{section_rel_Vfilt}

Let $X$ be a smooth, irreducible, complex algebraic variety. In this section, we consider nonzero regular functions
$f_1,\ldots,f_d\in\cO_X(X)$ and let $g=\sum_{i=1}^df_iy_i\in\cO_Y(Y)$, where $Y=X\times\A^d$ and we denote by 
$y_1,\ldots,y_d$ the standard coordinates on $\A^d$. Our goal is to relate the $V$-filtration on 
$B_{\bff}=\bigoplus_{\alpha\in\Z_{\geq 0}^d}\cO_X\partial_t^{\alpha}\delta_{\bff}$ and the microlocal $V$-filtration on
$\widetilde{B}_g=\bigoplus_{j\in\Z}\cO_Y\partial_z^j\delta_g$ (note that we denote by $z$ the
extra variable that acts on $\widetilde{B}_g$ in order to avoid confusion with the variables $t_1,\ldots,t_d$ that act on $B_{\bff}$).

Note that $g$ is homogeneous of degree $1$ with respect to the grading on $\cO_Y$ such that $\cO_X$ lies in degree $0$ and
${\rm deg}(y_i)=1$ for all $i$. We have a corresponding grading on $\cD_Y$ such that ${\rm deg}(\partial_{y_i})=-1$ for all $i$. 
Furthermore, on $\cD_Y\langle z,\partial_z,\partial_z^{-1}\rangle$ we have a grading such that ${\rm deg}(z)=1={\rm deg}(\partial_z^{-1})$
and ${\rm deg}(\partial_z)=-1$.

We can write
$$\widetilde{B}_g=\bigoplus_{j\in\Z}\bigoplus_{\alpha\in\Z_{\geq 0}^d}\cO_Xy^{\alpha}\partial_z^j\delta_g.$$
If for every $m\in\Z$ we put 
$$\widetilde{B}_g^{(m)}=\bigoplus_{\alpha\in\Z_{\geq 0}^d}\cO_Xy^{\alpha}\partial_z^{|\alpha|-m}\delta_g,$$
then it follows easily from the formulas (\ref{eq_action_B_f2}) 
and the fact that $g$ is homogeneous of degree $1$ 
that the decomposition $\widetilde{B}_g=\bigoplus_{m\in\Z}\widetilde{B}_g^{(m)}$
makes $\widetilde{B}_g$ a graded $\cD_Y\langle z,\partial_z,\partial_z^{-1}\rangle$-module. 
Let $\theta_y:=\sum_{i=1}^dy_i\partial_{y_i}\in\cD_Y$.

\begin{lem}\label{lem_theta}
For every $m\in\Z$ and every $u\in \widetilde{B}_g^{(m)}$, we have $(\theta_y-s)u=mu$.
\end{lem}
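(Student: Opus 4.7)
The plan is a direct calculation on elementary homogeneous generators. Since $\widetilde{B}_g^{(m)}$ decomposes as $\bigoplus_\alpha \cO_X\,y^\alpha \partial_z^{|\alpha|-m}\delta_g$ and both $\theta_y$ and $s$ act $\C$-linearly, it suffices to verify $(\theta_y - s)u = mu$ when $u = hy^\alpha \partial_z^j\delta_g$ with $h \in \cO_X$ and $j = |\alpha|-m$. The key structural input is that $g = \sum_i f_iy_i$ is homogeneous of degree $1$ in the $y_i$'s, so Euler's identity yields $\theta_y(g) = g$; everything else will follow by applying the action formulas (\ref{eq_action_B_f2}) specialized to this situation.

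First I would apply $\theta_y = \sum_i y_i\partial_{y_i}$ via the derivation rule in (\ref{eq_action_B_f2}). This derivation annihilates the $\cO_X$-coefficient $h$, acts as multiplication by $|\alpha|$ on the $y$-monomial $y^\alpha$, and contributes a ``vanishing-cycle'' term involving $\theta_y(g) = g$. The upshot is
$$\theta_y \cdot u \;=\; |\alpha|\,u \;-\; hy^\alpha g\,\partial_z^{j+1}\delta_g.$$

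Next I would compute $s\cdot u = -\partial_z z\cdot u$. Applying the rule for the action of $z$ in (\ref{eq_action_B_f2}) to the one-function case gives $z\cdot u = hy^\alpha g\,\partial_z^j\delta_g - j\,hy^\alpha\partial_z^{j-1}\delta_g$; multiplying through by $-\partial_z$ then yields
$$s\cdot u \;=\; -hy^\alpha g\,\partial_z^{j+1}\delta_g + j\,u.$$
Subtracting the two expressions, the terms $hy^\alpha g\,\partial_z^{j+1}\delta_g$ cancel exactly, leaving $(\theta_y - s)\cdot u = (|\alpha|-j)\,u = m\,u$.

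I do not anticipate any essential obstacle: the whole argument is a short bookkeeping computation. The only points worth a second thought are the sign convention in $s = -\partial_z z$ (equivalently $-s = z\partial_z + 1$) and keeping separate $\theta_y$'s trivial action on $h \in \cO_X$ from its Euler action on $y^\alpha$. The fact that the two integer contributions $|\alpha|$ and $-j$ combine to give precisely the grading weight $m = |\alpha|-j$ is in the end forced by the degree-$1$ homogeneity of $g$, which is the structural reason the lemma holds.
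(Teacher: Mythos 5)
Your proof is correct and matches the paper's proof essentially step for step: both reduce to a monomial generator $hy^\alpha\partial_z^{j}\delta_g$, compute $\theta_y\cdot u$ and $su = -\partial_z z\cdot u$ via the action formulas, and observe that the $g\partial_z^{j+1}$ terms cancel, leaving $(|\alpha|-j)u = mu$. Explicitly invoking Euler's identity $\theta_y(g)=g$ is a nice clarification of the structural reason, but it is the same computation the paper performs in expanded form.
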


\begin{proof}
We may and will assume that $u=hy^{\alpha}\partial_z^{|\alpha|-m}\delta_g$, for some $h\in\cO_X$.
On one hand, we have
$$\theta_y\cdot u=\sum_{i=1}^d\alpha_i y_ihy^{\alpha-e_i}\partial_z^{|\alpha|-m}\delta_g-\sum_{i=1}^dy_if_ihy^{\alpha}\partial_z^{|\alpha|-m+1}\delta_g
=|\alpha|hy^{\alpha}\partial_z^{|\alpha|-m}\delta_g-ghy^{\alpha}\partial_z^{|\alpha|-m+1}\delta_g.$$
On the other hand, we have
$$su=-\partial_zzu=-ghy^{\alpha}\partial_z^{|\alpha|-m+1}\delta_g+\big(|\alpha|-m\big)hy^{\alpha}\partial_z^{|\alpha|-m}\delta_g,$$
and the formula in the lemma follows. 
\end{proof}

Note that $V^{\gamma}\widetilde{B}_g$ is preserved by the action of $\theta_y-s$ for every $\gamma\in\Q$. By Lemma~\ref{lem_theta},
the decomposition $\widetilde{B}_g=\bigoplus_{m\in\Z}\widetilde{B}_g^{(m)}$ is an eigenspace decomposition with respect to the endomorphism
$\theta_y-s$. We deduce that we get an induced decomposition
$$V^{\gamma}\widetilde{B}_g=\bigoplus_{m\in\Z}V^{\gamma}\widetilde{B}_g^{(m)},\quad\text{where}\quad V^{\gamma}\widetilde{B}_g^{(m)}=
V^{\gamma}\widetilde{B}_g\cap \widetilde{B}_g^{(m)}.$$
We get a corresponding decomposition
$${\rm Gr}_V^{\gamma}(\widetilde{B}_g)=\bigoplus_{m\in\Z}{\rm Gr}_V^{\gamma}(\widetilde{B}_g^{(m)}),\quad\text{where}\quad
{\rm Gr}_V^{\gamma}(\widetilde{B}_g^{(m)})=V^{\gamma}\widetilde{B}_g^{(m)}/V^{>\gamma}\widetilde{B}_g^{(m)}.$$
Finally, we note that it follows from (\ref{eq_mult_partial}) that 
\begin{equation}\label{eq_comp_V_fil}
V^{\gamma}\widetilde{B}_g^{(m)}=\partial_z^{-m}V^{\gamma-m}\widetilde{B}_g^{(0)}\quad\text{for all}\quad \gamma\in\Q, m\in\Z.
\end{equation}

We now define the map that will allow us to compare the $V$-filtration on $B_{\bff}$ with the microlocal $V$-filtration on $\widetilde{B}_g$.
Let $\varphi\colon \widetilde{B}_g\to B_{\bff}$ be the unique $\cO_X$-linear map such that
\begin{equation}\label{eq_formula_phi}
\varphi(y^{\alpha}\partial_z^j\delta_g)=\partial_t^{\alpha}\delta_{\bff}\quad\text{for all}\quad \alpha\in\Z_{\geq 0}^d, j\in\Z.
\end{equation}
It is clear from the definition that for every $m\in\Z$, $\varphi$ induces an isomorphism of $\cO_X$-modules $\widetilde{B}_g^{(m)}\simeq B_{\bff}$. 
We collect in the following proposition some basic properties of $\varphi$. 

\begin{prop}\label{prop_properties_phi}
With the above notation, the following hold:
\begin{enumerate}
\item[i)] The map $\varphi$ is $\cD_X$-linear.
\item[ii)] We have $\varphi(\partial_zu)=\varphi(u)=\varphi(\partial_z^{-1}u)$ for every $u\in \widetilde{B}_g$.
\item[iii)] We have $\varphi(y_iu)=\partial_{t_i}\varphi(u)$ for every $u\in\widetilde{B}_g$ and $1\leq i\leq d$. 
\item[iv)] We have $\varphi(\partial_{y_i}u)=-t_i\varphi(u)$ for every $u\in \widetilde{B}_g$ and $1\leq i\leq d$.
\item[v)] We have $\varphi(su)=(s-m)\varphi(u)$ for every $u\in \widetilde{B}^{(m)}_g$, where $m\in\Z$.
\end{enumerate}
\end{prop}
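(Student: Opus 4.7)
All five properties are $\cO_X$-linear equalities in $B_{\bff}$ between images of elements of $\widetilde{B}_g$. Since $\{y^{\alpha}\partial_z^j\delta_g \mid \alpha \in \Z_{\geq 0}^d,\ j \in \Z\}$ is an $\cO_X$-basis of $\widetilde{B}_g$ and all the operators appearing in the statement commute with multiplication by elements of $\cO_X$, it suffices to verify each identity on these basis elements. The verifications then use only the explicit action formulas (\ref{eq_action_B_f}) on $B_{\bff}$ and (\ref{eq_action_B_f2}) on $\widetilde{B}_g$, together with the defining formula (\ref{eq_formula_phi}).

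Part (ii) is immediate, because $\varphi(y^{\alpha}\partial_z^j\delta_g)$ does not depend on $j$. Part (iii) follows from $y_i \cdot y^{\alpha}\partial_z^j\delta_g = y^{\alpha+e_i}\partial_z^j\delta_g$, which $\varphi$ sends to $\partial_t^{\alpha+e_i}\delta_{\bff} = \partial_{t_i}\partial_t^{\alpha}\delta_{\bff}$. For (i), a derivation $D \in {\rm Der}_{\C}(\cO_X)$ acts on $y^{\alpha}\partial_z^j\delta_g$ via (\ref{eq_action_B_f2}); using $D(y^{\alpha}) = 0$ and $D(g) = \sum_i D(f_i)y_i$, one gets $-\sum_i D(f_i)y^{\alpha+e_i}\partial_z^{j+1}\delta_g$, whose $\varphi$-image is $-\sum_i D(f_i)\partial_t^{\alpha+e_i}\delta_{\bff}$, and this agrees with the action of $D$ on $\partial_t^{\alpha}\delta_{\bff}$ given by (\ref{eq_action_B_f}). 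Part (iv) is analogous: taking $D = \partial_{y_i}$ in (\ref{eq_action_B_f2}) and using $\partial_{y_i}(g) = f_i$, one obtains $\alpha_i y^{\alpha-e_i}\partial_z^j\delta_g - f_i y^{\alpha}\partial_z^{j+1}\delta_g$, whose $\varphi$-image $\alpha_i\partial_t^{\alpha-e_i}\delta_{\bff} - f_i\partial_t^{\alpha}\delta_{\bff}$ equals $-t_i\cdot \partial_t^{\alpha}\delta_{\bff}$ by the second formula in (\ref{eq_action_B_f}).

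Part (v) is the only one that goes beyond computing on a single basis element. For $u \in \widetilde{B}_g^{(m)}$, Lemma~\ref{lem_theta} rewrites $su = \theta_y u - mu = \sum_i y_i \partial_{y_i} u - mu$, and combining (iii) with (iv) yields
$$\varphi(y_i \partial_{y_i} u) = \partial_{t_i}\varphi(\partial_{y_i}u) = -\partial_{t_i}t_i\varphi(u) = s_i\varphi(u).$$
Summing over $i$ gives $\varphi(su) = s\varphi(u) - m\varphi(u) = (s-m)\varphi(u)$. There is no serious obstacle; the only point that requires attention is sign bookkeeping, and the shift $s \mapsto s-m$ (rather than $s \mapsto s+m$) is forced by the sign in $s_i = -\partial_{t_i}t_i$, which exactly cancels the sign from (iv), in agreement with the $\theta_y - s$ appearing in Lemma~\ref{lem_theta}.
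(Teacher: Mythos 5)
Your proposal is correct and follows the paper's proof essentially verbatim: verify the identities elementwise using the explicit actions (\ref{eq_action_B_f}) and (\ref{eq_action_B_f2}), then deduce (v) from Lemma~\ref{lem_theta} together with (iii) and (iv), exactly as the paper does. The only small imprecision is your stated justification for reducing to basis elements in part (i)---derivations $D\in{\rm Der}_{\C}(\cO_X)$ do \emph{not} commute with $\cO_X$-multiplication---but the reduction still goes through since the Leibniz rule together with the $\cO_X$-linearity of $\varphi$ shows that $\varphi(D(hu_0))=D\varphi(hu_0)$ follows from $\varphi(Du_0)=D\varphi(u_0)$; the paper instead simply carries the coefficient $h$ through the computation.
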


\begin{proof}
For i), since $\varphi$ is $\cO_X$-linear by definition, it is enough to show that $\varphi(Du)=D\varphi(u)$ for every $D\in {\rm Der}_{\C}(\cO_X)$
and every $u\in \widetilde{B}_g$. We may and will assume that $u=hy^{\alpha}\partial_z^j\delta_g$ for some $h\in\cO_X$, $\alpha\in\Z_{\geq 0}^d$ and $j\in\Z$.
In this case, we have
$$\varphi(Du)=\varphi\big(D(h)y^{\alpha}\partial_z^j\delta_g-hD(g)y^{\alpha}\partial_z^{j+1}\delta_g\big)=
\varphi\big(D(h)y^{\alpha}\partial_z^j\delta_g-\sum_{i=1}^dhD(f_i)y^{\alpha+e_i}\partial_z^{j+1}\delta_g\big)$$
$$=D(h)\partial_t^{\alpha}\delta_{\bff}-\sum_{i=1}^dhD(f_i)\partial_t^{\alpha+e_i}\delta_{\bff}=D\cdot h\partial_t^{\alpha}\delta_{\bff}=D\varphi(u).$$
This completes the proof of i).

The assertions in ii) and iii) follow directly from the definition. In order to prove iv), we may assume that $u=hy^{\alpha}\partial_z^j\delta_g$ for some $h\in\cO_X$, 
$\alpha\in\Z_{\geq 0}^d$ and $j\in\Z$. We then have
$$\varphi(\partial_{y_i}u)=\varphi\big(h\alpha_iy^{\alpha-e_i}\partial_z^j\delta_g-f_ihy^{\alpha}\partial_z^{j+1}\delta_g\big)$$
$$=\alpha_ih\partial_t^{\alpha-e_i}\delta_{\bff}-f_ih\partial_t^{\alpha}\delta_{\bff}=-t_i\cdot h\partial_t^{\alpha}\delta_{\bff}=-t_i\varphi(u).$$
We thus obtain the assertion in iv).

Finally, in order to prove v), we note that by Lemma~\ref{lem_theta}, if $u\in \widetilde{B}^{(m)}_g$, then $su=(\theta_y-m)u$, hence using
iii) and iv), we have
$$\varphi(su)=\varphi\big((\theta_y-m)u\big)=\sum_{i=1}^d\varphi(y_i\partial_{y_i}u)-m\varphi(u)=-\sum_{i=1}^d\partial_{t_i}t_i\varphi(u)-m\varphi(u)=
(s-m)\varphi(u).$$
\end{proof}

We now come to the main result of this section.
Let us denote by $\varphi_0$ the restriction of $\varphi$ to $\widetilde{B}_g^{(0)}$. Note that since $\varphi_0$ is bijective, the assertion in the next theorem
together with (\ref{eq_comp_V_fil}) say that the $V$-filtration on $B_{\bff}$ and the microlocal $V$-filtration on $\widetilde{B}_g$ determine each other.

\begin{thm}\label{thm_V_filtration}
With the above notation, for every $\gamma\in\Q$, we have
$$V^{\gamma}\widetilde{B}_g^{(0)}
=\varphi_0^{-1}(V^{\gamma}B_{\bff}).$$
\end{thm}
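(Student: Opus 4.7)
I would prove the equality by invoking the uniqueness of the Kashiwara--Malgrange $V$-filtration on $B_{\bff}$. Define a candidate filtration on $B_{\bff}$ by
\[
U^{\gamma}B_{\bff} := \varphi_0\bigl(V^{\gamma}\widetilde{B}_g^{(0)}\bigr), \quad \gamma\in\Q,
\]
and check that $U^{\bullet}$ satisfies the characterizing axioms of the $V$-filtration; by uniqueness this forces $U^{\gamma}B_{\bff}=V^{\gamma}B_{\bff}$, which is the desired statement.

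The first step is to translate the defining operators on $B_{\bff}$ back to operators on $\widetilde{B}_g^{(0)}$ via $\varphi_0$. Combining parts (ii)--(v) of Proposition~\ref{prop_properties_phi}, one sees that $\varphi_0$ intertwines $t_i$ on $B_{\bff}$ with $-\partial_{y_i}\partial_z^{-1}\in V^1\widetilde{\cR}$ acting on $\widetilde{B}_g^{(0)}$, intertwines $\partial_{t_i}$ with $y_i\partial_z\in V^{-1}\widetilde{\cR}$, and (using (v) with $m=0$) intertwines $s$ with $s$. Both $-\partial_{y_i}\partial_z^{-1}$ and $y_i\partial_z$ are homogeneous of degree $0$ in the grading of Section~\ref{section_rel_Vfilt}, hence preserve $\widetilde{B}_g^{(0)}$. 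These intertwinings are the engine of all subsequent verifications.

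With the intertwinings in hand, the easy axioms fall out: discreteness, left-continuity and exhaustiveness of $U^{\bullet}$ follow by restricting the corresponding properties of $V^{\bullet}\widetilde{B}_g$ to the direct summand $\widetilde{B}_g^{(0)}$; the inclusions $t_i\cdot U^{\gamma}\subseteq U^{\gamma+1}$ and $\partial_{t_i}\cdot U^{\gamma}\subseteq U^{\gamma-1}$ are immediate from $V^1\widetilde{\cR}\cdot V^{\gamma}\widetilde{B}_g\subseteq V^{\gamma+1}\widetilde{B}_g$ and $V^{-1}\widetilde{\cR}\cdot V^{\gamma}\widetilde{B}_g\subseteq V^{\gamma-1}\widetilde{B}_g$ combined with preservation of the degree-$0$ summand; and nilpotence of $s+\gamma$ on $\mathrm{Gr}^{\gamma}_UB_{\bff}$ follows since this graded piece is identified via $\varphi_0^{-1}$ with the degree-$0$ summand of $\mathrm{Gr}^{\gamma}_V\widetilde{B}_g$, on which $s+\gamma$ is already nilpotent.

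The main obstacle is showing that each $U^{\gamma}B_{\bff}$ is a coherent $V^0\cR$-submodule of $B_{\bff}$. I would attack this by first verifying that each generator of the degree-$0$ part $(V^0\widetilde{\cR})_0$ --- namely $\cD_X$, $y_i\partial_{y_j}$, $z\partial_z$, $z\partial_{y_i}$, and $\partial_z^{-1}\partial_{y_i}$ --- is intertwined by $\varphi_0$ with an element of $V^0\cR$; this is a short direct computation using properties (ii)--(v) together with the identity $z = -\partial_z^{-1}s$ (e.g.\ $y_i\partial_{y_j}\mapsto -\partial_{t_i}t_j$, $\partial_z^{-1}\partial_{y_i}\mapsto -t_i$, and $z\partial_{y_i}\mapsto (s+1)t_i$, all of which lie in $V^0\cR$). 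This yields a ring homomorphism $(V^0\widetilde{\cR})_0\to V^0\cR$, $Q\mapsto\widetilde{Q}$, with $\varphi_0(Qu)=\widetilde{Q}\varphi_0(u)$. Starting from homogeneous $V^0\widetilde{\cR}$-generators $u_1,\ldots,u_N$ of $V^{\gamma}\widetilde{B}_g$ of respective degrees $d_i$, and using a normal-form lemma stating that each $(V^0\widetilde{\cR})_{-d_i}$ is a finitely generated left $(V^0\widetilde{\cR})_0$-module (move the excess $\partial_{y_j}$'s to the right, absorbing the commutators into degree-$0$ factors), one obtains a finite set of generators for $V^{\gamma}\widetilde{B}_g^{(0)}$ over $(V^0\widetilde{\cR})_0$, which in turn supplies a finite set of $V^0\cR$-generators for $U^{\gamma}B_{\bff}$. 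Once coherence is established, the uniqueness theorem closes the argument.
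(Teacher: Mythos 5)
Your plan correctly identifies the key intertwining properties of $\varphi_0$ (and the explicit images of the degree-$0$ generators of $V^0\widetilde{\cR}$ in $V^0\cR$ are right), and the general strategy of reducing the theorem to a uniqueness statement for $V$-filtrations is also the strategy the paper uses. However, there is a genuine gap: you verify axioms (i), (ii), (iv), discreteness and exhaustiveness for the candidate filtration $U^{\gamma}B_{\bff}:=\varphi_0(V^{\gamma}\widetilde{B}_g^{(0)})$, but you never address axiom (iii), namely $V^1\cR\cdot U^{\gamma}B_{\bff}=U^{\gamma+1}B_{\bff}$ for $\gamma\gg 0$. The uniqueness theorem you want to invoke requires the candidate to satisfy \emph{all} the characterizing properties, and (iii) is precisely the one used to bound the ``tail'' $U^{\gamma'}B_{\bff}$ for $\gamma'\gg 0$ inside $V^{\gamma}B_{\bff}$ when deducing the inclusion $U^{\gamma}\subseteq V^{\gamma}$; coherence alone does not yield this. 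Verifying (iii) directly is not straightforward: via the intertwining, it amounts to the assertion $\sum_i \partial_{y_i}\partial_z^{-1}\cdot V^{\gamma}\widetilde{B}_g^{(0)} = V^{\gamma+1}\widetilde{B}_g^{(0)}$ for $\gamma\gg 0$, and the obvious microlocal fact $V^1\widetilde{\cR}\cdot V^{\gamma}\widetilde{B}_g = V^{\gamma+1}\widetilde{B}_g$ decomposes across all degrees, not just degree $0$.

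The paper circumvents this issue (and, incidentally, also avoids the coherence argument, which is the most delicate part of your plan): it proves the two inclusions separately, running the biquotient argument once on $B_{\bff}$, where only the ``soft'' properties (exhaustive, discrete, $V^0\cR$-module, $s+\gamma$ nilpotent) are needed for $W^{\bullet}=\varphi_0(V^{\bullet}\widetilde{B}_g^{(0)})$ while the ``hard'' properties (coherence and axiom (iii)) are borrowed from the known $V^{\bullet}B_{\bff}$, and once on $\widetilde{B}_g$ with the roles reversed, where the candidate $U^{\bullet}\widetilde{B}_g=\bigoplus_m \partial_z^{-m}\varphi_0^{-1}(V^{\gamma-m}B_{\bff})$ needs only the soft properties while coherence and the $\partial_z^{-1}$-isomorphism (the microlocal analogue of (iii)) are used for $V^{\bullet}\widetilde{B}_g$. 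If you want to keep your single-sided approach, you would need to supply a proof of (iii) for $U^{\bullet}B_{\bff}$; otherwise, you should restructure the argument as two separate inclusions as the paper does.
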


\begin{proof}
The argument is similar to that proving the uniqueness of $V$-filtrations (see for example \cite[Lemme~3.1.2]{Saito-MHP}). 
Recall that we write 
$${\mathcal R}=\cD_X\langle t_1,\ldots,t_d,\partial_{t_1},\ldots,\partial_{t_d}\rangle\quad\text{and}\quad \widetilde{\mathcal R}=\cD_Y\langle z,\partial_z,\partial_z^{-1}\rangle.$$
Let's prove first the inclusion
\begin{equation}\label{eq_inclusion1}
V^{\gamma}B_{\bff}\subseteq W^{\gamma}B_{\bff}:=\varphi_0(V^{\gamma}\widetilde{B}_g^{(0)})\quad\text{for all}\quad\gamma\in\Q.
\end{equation}
Note that by definition $W^{\bullet}B_{\bff}$ is an exhaustive, decreasing filtration indexed by rational numbers, which is discrete and left continuous
(since the microlocal filtration on $\widetilde{B}_g$ has these properties)
and Proposition~\ref{prop_properties_phi}i) implies that each $W^{\gamma}B_{\bff}$ is a $\cD_X$-submodule of $B_{\bff}$.
This filtration also satisfies
\begin{equation}\label{eq_propertyW1}
t_i\cdot W^{\gamma}B_{\bff}\subseteq W^{\gamma+1}B_{\bff}\quad\text{for all}\quad \gamma\in\Q, 1\leq i\leq d.
\end{equation}
Indeed, note that if $u\in V^{\gamma}\widetilde{B}_g^{(0)}$, then $-\partial_z^{-1}\partial_{y_i}u\in V^{\gamma+1}\widetilde{B}_g^{(0)}$ and
it follows from properties ii) and iv) in Proposition~\ref{prop_properties_phi} that
$$t_i\cdot \varphi_0(u)=\varphi_0(-\partial_z^{-1}\partial_{y_i}u)\in W^{\gamma+1}B_{\bff}.$$
We also have
\begin{equation}\label{eq_propertyW3}
\partial_{t_i}\cdot W^{\gamma}B_{\bff}\subseteq W^{\gamma-1}B_{\bff}\quad\text{for all}\quad \gamma\in\Q, 1\leq i\leq d.
\end{equation}
Indeed, if $u\in V^{\gamma}\widetilde{B}_g^{(0)}$, then $\partial_zy_iu\in V^{\gamma-1}\widetilde{B}_g^{(0)}$,
and it follows from properties ii) and iii) in Proposition~\ref{prop_properties_phi} that
$$\partial_{t_i}\cdot \varphi_0(u)=\varphi_0(\partial_zy_iu)\in W^{\gamma-1}B_{\bff}.$$
In particular, we see that each $W^{\gamma}B_{\bff}$ is a $V^0{\mathcal R}$-submodule of $B_{\bff}$. 

Furthermore, for every $\gamma\in\Q$, we have
\begin{equation}\label{eq_propertyW2}
s+\gamma\quad\text{is nilpotent on}\quad {\rm Gr}_W^{\gamma}(B_{\bff}).
\end{equation}
Indeed, assertion v) in Proposition~\ref{prop_properties_phi} gives $\varphi(su)=s\varphi(u)$ for every $u\in \widetilde{B}_g^{(0)}$ and 
we know that $s+\gamma$ is nilpotent on ${\rm Gr}^{\gamma}_V(\widetilde{B}_g)$. 

We can now prove the inclusion (\ref{eq_inclusion1}). 
If $\gamma$, $\gamma'$ are distinct rational numbers, then both $s+\gamma$ and $s+\gamma'$ are nilpotent on
\begin{equation}\label{biquotient}
\frac{V^{\gamma}B_{\bff}\cap W^{\gamma'}B_{\bff}}{(V^{>\gamma}B_{\bff}\cap W^{\gamma'}B_{\bff})+(V^{\gamma}B_{\bff}\cap W^{>\gamma'}B_{\bff})}
\end{equation}
(this follows from (\ref{eq_propertyW2}) and the fact that $s+\gamma$ is nilpotent on ${\rm Gr}_V^{\gamma}(B_{\bff})$ by definition of the $V$-filtration on $B_{\bff}$). 
This implies that the quotient in (\ref{biquotient}) is $0$. We deduce that
\begin{equation}\label{eq_incl_for_gamma}
V^{\gamma}B_{\bff}\subseteq W^{\gamma}B_{\bff}+V^{>\gamma}B_{\bff}\quad\text{for all}\quad\gamma\in\Q.
\end{equation}
Indeed, if $u\in V^{\gamma}B_{\bff}$, since $W^{\bullet}B_{\bff}$ is exhaustive, there is $\gamma'$ such that $u\in W^{\gamma'}B_{\bff}$. 
If $\gamma'\geq \gamma$, then we are done. Suppose now that $\gamma'<\gamma$. The fact that the quotient in (\ref{biquotient}) is $0$
implies that we can write $u=u_1+u_2$, with $u_1\in V^{>\gamma}B_{\bff}\cap W^{\gamma'}B_{\bff}$ and $u_2\in V^{\gamma}B_{\bff}\cap W^{>\gamma'}B_{\bff}$. 
Note that $u$ lies in the right-hand side of (\ref{eq_incl_for_gamma}) if and only if $u_2$ does. Also, we have $u_2\in V^{\gamma}B_{\bff}\cap W^{\gamma''}B_{\bff}$
for some $\gamma''>\gamma'$. We can repeat the argument with $u$ replaced by $u_2$; since $W^{\bullet}B_{\bff}$ is discrete, we see that after finitely many steps we conclude that $u\in W^{\gamma}B_{\bff}+V^{>\gamma}B_{\bff}$. 

Using the fact that the filtration $V^{\bullet}B_{\bff}$ is discrete, we deduce from (\ref{eq_incl_for_gamma}) that for every $\gamma$, $\gamma'\in\Q$, we have
\begin{equation}\label{eq_incl_for_gamma2}
V^{\gamma}B_{\bff}\subseteq W^{\gamma}B_{\bff}+V^{\gamma'}B_{\bff}.
\end{equation}
We next note that given $\gamma\in\Q$, it follows from property iii) in the definition of the $V$-filtration on $B_{\bff}$ that 
there is an integer $q_0$ such that for every integer $q\geq q_0$, we have
$$V^{\gamma+q}B_{\bff}\subseteq V^{q-q_0}{\mathcal R}\cdot V^{\gamma+q_0}B_{\bff}.$$
On the other hand, since $V^{\gamma+q_0}B_{\bff}$ is a finitely generated $V^0{\mathcal R}$-module and $W^{\bullet}B_{\bff}$ is exhaustive, there is $\beta$ 
such that
$V^{\gamma+q_0}B_{\bff}\subseteq W^{\beta}B_{\bff}$. By taking $q$ such that $q-q_0+\beta\geq\gamma$, we conclude that
$$V^{\gamma}B_{\bff}\subseteq W^{\gamma}B_{\bff}+V^{\gamma+q}B_{\bff}\subseteq W^{\gamma}B_{\bff}+V^{q-q_0}{\mathcal R}\cdot W^{\beta}B_{\bff}
\subseteq W^{\gamma}B_{\bff}+W^{\beta+q-q_0}B_{\bff}\subseteq W^{\gamma}B_{\bff},$$
where the first inclusion follows from (\ref{eq_incl_for_gamma2}) and the third one follows from (\ref{eq_propertyW1}) and (\ref{eq_propertyW3}).
This completes the proof of (\ref{eq_inclusion1}).

In order to complete the proof of the theorem, it is enough to also show that 
$$\varphi_0(V^{\gamma}\widetilde{B}_g^{(0)})\subseteq V^{\gamma}B_{\bff}\quad\text{for all}\quad \gamma\in\Q.$$
In fact, we will prove the equivalent statement that
\begin{equation}\label{eq_inclu_U}
V^{\gamma}\widetilde{B}_g\subseteq U^{\gamma}\widetilde{B}_g:=\bigoplus_{m\in\Z}\partial_z^{-m}\varphi_0^{-1}(V^{\gamma-m}B_{\bff})
\quad\text{for all}\quad \gamma\in\Q.
\end{equation}

It is clear from the definition that $U^{\bullet}\widetilde{B}_g$ is an exhaustive, decreasing filtration indexed by rational numbers (since the $V$-filtration
on $B_{\bff}$ has these properties). Moreover, if $\ell$ is a positive integer such that $V^{\gamma}B_{\bff}$ is constant for $\gamma$ in each interval of the form 
$\left(\tfrac{i-1}{\ell},\tfrac{i}{\ell}\right]$, with $i\in\Z$, then it follows from the definition that $U^{\gamma}\widetilde{B}_{g}$ is constant for $\gamma$ in such an interval.
Therefore $U^{\bullet}\widetilde{B}_g$ is discrete and left continuous.

Note next that by Proposition~\ref{prop_properties_phi}i), every $U^{\gamma}\widetilde{B}_g$ is a $\cD_X$-submodule of $\widetilde{B}_g$. 
Moreover, it follows directly from the definition that
$$\partial_z^j\cdot U^{\gamma}\widetilde{B}_g\subseteq U^{\gamma-j}\widetilde{B}_g\quad\text{for all}\quad \gamma\in\Q, j\in\Z.$$
We also have
\begin{equation}\label{eq_mult_by_z}
z\cdot U^{\gamma}\widetilde{B}_g\subseteq U^{\gamma+1}\widetilde{B}_g\quad\text{for all}\quad \gamma\in\Q.
\end{equation}
Indeed, if $u\in\varphi_0^{-1}(V^{\gamma-m}B_{\bff})$ for some $m\in\Z$, then using the fact that $[z,\partial_z^{-m}]=m\partial_z^{-m-1}$
(see Lemma~\ref{lem0_appendix} in the Appendix), we have
$$z\partial_z^{-m}u=\partial_z^{-m}(zu+m\partial_z^{-1}u)=\partial_z^{-m-1}(\partial_zzu+mu)=\partial_z^{-m-1}(-su+mu).$$
Note that $-su+mu\in \widetilde{B}_g^{(0)}$ 
and using Proposition~\ref{prop_properties_phi} we see that
$$\varphi_0(-su+mu)=-(s-m)\varphi_0(u)\in V^{\gamma-m}B_{\bff},$$
hence $z\partial_z^{-m}u\in U^{\gamma+1}\widetilde{B}_g$, proving (\ref{eq_mult_by_z}).
In particular, we see that each $U^{\gamma}\widetilde{B}_g$ is a $V^0\widetilde{\mathcal R}$-module. 

Finally, $s+\gamma$ is nilpotent on ${\rm Gr}_U^{\gamma}(\widetilde{B}_g)$ for every $\gamma\in \Q$. 
Indeed, suppose that $u_0\in \varphi_0^{-1}(V^{\gamma-m}B_{\bff})$. In this case 
$$\varphi_0\big((s+\gamma-m)^Nu\big)=(s+\gamma-m)^N\varphi_0(u)\in V^{>\gamma-m}B_{\bff}$$
for $N\gg 0$, where the equality follows from Proposition~\ref{prop_properties_phi}v). 
Since $P(s)\partial_z^{-m}=\partial_z^{-m}P(s-m)$ for every $P\in\C[s]$ (see Lemma~\ref{lem2_appendix} in Appendix), it follows that
$$(s+\gamma)^N\partial_z^{-m}u=\partial_z^{-m}(s+\gamma-m)^Nu\in U^{>\gamma}\widetilde{B}_g\quad\text{for}\quad N\gg 0.$$

We can now prove the inclusion (\ref{eq_inclu_U}). Since the argument is very similar to that we used in the proof of (\ref{eq_inclusion1}),
we omit some of the details. First, we see that 
\begin{equation}\label{eq_extra}
\frac{V^{\gamma}\widetilde{B}_g\cap U^{\gamma'}\widetilde{B}_g}{(V^{>\gamma}\widetilde{B}_g\cap U^{\gamma'}\widetilde{B}_g)+
(V^{\gamma}\widetilde{B}_g+U^{>\gamma'}\widetilde{B}_g)}=0\quad\text{for}\quad\gamma\neq\gamma'
\end{equation}
using the fact that both $s+\gamma$ and $s+\gamma'$ are nilpotent on this quotient. As before, we use the fact that $U^{\bullet}\widetilde{B}_g$ is exhaustive
and discrete and $V^{\bullet}\widetilde{B}_g$ is discrete
to deduce from (\ref{eq_extra}) that for every $\gamma,\gamma'\in\Q$, we have
\begin{equation}\label{eq_inclu_inU1}
V^{\gamma}\widetilde{B}_g\subseteq U^{\gamma}\widetilde{B}_g+V^{\gamma'}\widetilde{B}_g.
\end{equation}
Let us fix now $\gamma\in\Q$. 
Since $V^0\widetilde{B}_g$ is a finitely generated $V^0\widetilde{\mathcal R}$-module and each $U^{\beta}\widetilde{B}_g$ is a  $V^0\widetilde{\mathcal R}$-module,
it follows that there is $\beta\in\Q$ such that $V^0\widetilde{B}_g\subseteq U^{\beta}\widetilde{B}_g$. 
If we take $\gamma'\in\Z$ such that $\gamma'+\beta\geq\gamma$, then using (\ref{eq_inclu_inU1}) and (\ref{eq_mult_partial}) we conclude that
$$V^{\gamma}\widetilde{B}_g\subseteq U^{\gamma}\widetilde{B}_g+V^{\gamma'}\widetilde{B}_g=
U^{\gamma}\widetilde{B}_g+\partial_z^{-\gamma'}\cdot V^0\widetilde{B}_g$$
$$\subseteq 
U^{\gamma}\widetilde{B}_g+\partial_z^{-\gamma'}\cdot U^{\beta}\widetilde{B}_g
\subseteq U^{\gamma}\widetilde{B}_g+U^{\gamma'+\beta}\widetilde{B}_g\subseteq U^{\gamma}\widetilde{B}_g.$$
This completes the proof of the theorem.
\end{proof}

We end this section with another application of the map $\varphi$, relating $b$-functions (with respect to $f_1,\ldots,f_d$) to microlocal $b$-functions (with respect to $g$). 

\begin{prop}\label{prop_b_fcn}
For every $m\in\Z$ and every $u\in \widetilde{B}_g^{(m)}$, we have
$$\widetilde{b}_u(s-m)=b_{\varphi(u)}(s).$$
\end{prop}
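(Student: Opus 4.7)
The plan is to reformulate the desired equality of two polynomials as a single correspondence between suitable subsets of $\widetilde{B}_g^{(m)}$ and of $B_{\bff}$ tied to the $V^1$-filtration, and then verify this correspondence by reducing everything to the intertwinings from Proposition~\ref{prop_properties_phi}.

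By Proposition~\ref{prop_properties_phi}(v), for any $b(s)\in\C[s]$ we have $\varphi\bigl(b(s)u\bigr)=b(s-m)\varphi(u)$. Since $s$ preserves the grading of Section~\ref{section_rel_Vfilt}, $b(s)u$ lies in $\widetilde{B}_g^{(m)}$, and the restriction $\varphi_m\colon\widetilde{B}_g^{(m)}\to B_{\bff}$ is bijective. Thus the desired equality $\widetilde{b}_u(s-m)=b_{\varphi(u)}(s)$ is equivalent to the \emph{key identity}
$$\varphi_m\bigl(V^1\widetilde{\cR}\cdot u\cap\widetilde{B}_g^{(m)}\bigr)=V^1\cR\cdot\varphi(u). \qquad(\ast)$$
Granting $(\ast)$, the condition $b(s)u\in V^1\widetilde{\cR}\cdot u$ is equivalent to $b(s-m)\varphi(u)\in V^1\cR\cdot\varphi(u)$, so the ideals generated by $\widetilde{b}_u(s)$ and by $b_{\varphi(u)}(s+m)$ in $\C[s]$ coincide.

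To prove $(\ast)$, I would first use $V^1\widetilde{\cR}=\partial_z^{-1}V^0\widetilde{\cR}$ together with the grading decomposition to rewrite $V^1\widetilde{\cR}\cdot u\cap\widetilde{B}_g^{(m)}=\partial_z^{-1}\bigl((V^0\widetilde{\cR})^{(-1)}u\bigr)$. Combining with $\varphi_m\circ\partial_z^{-1}=\varphi_{m-1}$ on $\widetilde{B}_g^{(m-1)}$ (from Proposition~\ref{prop_properties_phi}(ii)), this reduces $(\ast)$ to $\varphi_{m-1}\bigl((V^0\widetilde{\cR})^{(-1)}u\bigr)=V^1\cR\cdot\varphi(u)$. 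By inductively pulling $\partial_{y_i}$-factors to the left using $[\partial_{y_i},y_j]=\delta_{ij}$ and the commutativity of $\partial_{y_i}$ with $\cD_X, z, z\partial_z, \partial_z^{-1}$, one further obtains the decomposition $(V^0\widetilde{\cR})^{(-1)}=\sum_{i=1}^d\partial_{y_i}\cdot(V^0\widetilde{\cR})^{(0)}$.

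The main technical input is the following \emph{transfer lemma}: for each $k\in\Z$, conjugation by $\varphi_k$ matches the action of $V^0\cR$ on $B_{\bff}$ with that of $(V^0\widetilde{\cR})^{(0)}$ on $\widetilde{B}_g^{(k)}$. Precisely, for every $R\in(V^0\widetilde{\cR})^{(0)}$ there is $P\in V^0\cR$ with $\varphi_k(Rv)=P\varphi_k(v)$ for all $v\in\widetilde{B}_g^{(k)}$, and conversely. Since the transfer is a ring homomorphism, it suffices to verify this on a set of ring generators. Using Proposition~\ref{prop_properties_phi}, the generators $\cD_X, t_i, t_i\partial_{t_j}$ of $V^0\cR$ correspond respectively to $\cD_X, -\partial_z^{-1}\partial_{y_i}, -\partial_{y_i}y_j\in(V^0\widetilde{\cR})^{(0)}$; conversely, one can take $\cD_X, z\partial_z, y_i\partial_{y_j}, z\partial_{y_i}, \partial_z^{-1}\partial_{y_i}$ as generators of $(V^0\widetilde{\cR})^{(0)}$, whose $\varphi_k$-transfers are $\cD_X, k-1-s, -\partial_{t_i}t_j, t_i(s-k), -t_i\in V^0\cR$ (using Lemma~\ref{lem_theta} and the identity $z=-\partial_z^{-1}s$ derived from $s=-\partial_zz$).

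Granting the transfer lemma, both inclusions of the reduced key identity follow directly. For $\subseteq$, an element of $(V^0\widetilde{\cR})^{(-1)}u$ has the form $\partial_{y_i}Ru$ with $R\in(V^0\widetilde{\cR})^{(0)}$; Proposition~\ref{prop_properties_phi}(iv) and the transfer lemma give $\varphi_{m-1}(\partial_{y_i}Ru)=-t_i\varphi_m(Ru)=-t_iP\varphi(u)$ for some $P\in V^0\cR$, which lies in $t_iV^0\cR\cdot\varphi(u)\subseteq V^1\cR\cdot\varphi(u)$. For $\supseteq$, any element $t_iP\varphi(u)$ of $V^1\cR\cdot\varphi(u)$ equals $t_i\varphi_m(\widetilde Pu)$ for some $\widetilde P\in(V^0\widetilde{\cR})^{(0)}$ by the converse transfer, and Proposition~\ref{prop_properties_phi}(iv) then rewrites this as $-\varphi_{m-1}(\partial_{y_i}\widetilde Pu)\in\varphi_{m-1}\bigl((V^0\widetilde{\cR})^{(-1)}u\bigr)$. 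The main challenge is setting up the transfer lemma, particularly identifying the generators of $(V^0\widetilde{\cR})^{(0)}$ needed to handle operators involving $z$ and $\partial_z^{-1}$; once this is done, the rest reduces to routine manipulations.
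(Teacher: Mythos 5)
Your proposal is correct, but it takes a genuinely different, more structural route than the paper. The paper works directly with the two divisibility statements: for $b_{\varphi(u)}(s)\mid\widetilde b_u(s-m)$, it takes a degree-$0$ operator $P\in V^1\widetilde{\cR}$ with $\widetilde b_u(s)u=Pu$, factors $P=\sum_i\partial_{y_i}\partial_z^{-1}Q_i$, and checks directly (by inspecting the form of $Q_i$ and applying Proposition~\ref{prop_properties_phi}) that $\varphi(Q_iu)\in V^0\cR\cdot\varphi(u)$; for the other divisibility it constructs an explicit lift $\widetilde T=\sum T_{\alpha,\beta}(-y)^\alpha\partial_y^\beta\partial_z^{|\beta|-|\alpha|}\in V^1\widetilde{\cR}$ of a given $T\in V^1\cR$ and uses injectivity of $\varphi$ on $\widetilde B_g^{(m)}$. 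You instead isolate a reusable \emph{transfer lemma} relating $(V^0\widetilde{\cR})^{(0)}$-action on $\widetilde B_g^{(k)}$ to $V^0\cR$-action on $B_{\bff}$, and prove the stronger module-level identity $\varphi_m\bigl(V^1\widetilde{\cR}\cdot u\cap\widetilde B_g^{(m)}\bigr)=V^1\cR\cdot\varphi(u)$, which cleanly implies both divisibilities at once. The trade-off is that the transfer lemma, though natural, requires identifying and verifying ring generators of $(V^0\widetilde{\cR})^{(0)}$ and of $V^0\cR$; in the paper this work is absorbed into the two ad hoc manipulations. Two small remarks: $(\ast)$ is \emph{sufficient} for the $b$-function equality, not literally equivalent to it, so that phrasing should be softened; and the claimed generating set for $(V^0\widetilde{\cR})^{(0)}$ deserves a short inductive argument (induction on the number of $y$- and $\partial_y$-factors, as you do for the decomposition of $(V^0\widetilde{\cR})^{(-1)}$). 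Apart from that, the argument is complete and all the individual transfers you record check out against Proposition~\ref{prop_properties_phi} and Lemma~\ref{lem_theta}.
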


\begin{proof}
By definition of $\widetilde{b}_u(s)$, working locally on $X$, we can find $P\in V^1\widetilde{\mathcal R}=\sum_{a-b\geq 1}\cD_Yz^a\partial_z^b$ such that
\begin{equation}\label{eq1_prop_b_fcn}
\widetilde{b}_u(s)u=Pu.
\end{equation}
 We may and will assume that ${\rm deg}(P)=0$. This implies that we can write
$P=\sum_{i=1}^d\partial_{y_i}P_i$ for some $P_1,\ldots,P_d$ of degree $1$. If we put $Q_i=\partial_zP_i$ for all $i$, then
$P=\sum_{i=1}^d\partial_{y_i}\partial_z^{-1}Q_i$. 
Applying $\varphi$ to (\ref{eq1_prop_b_fcn}), we obtain
\begin{equation}\label{eq2_prop_b_fcn}
\varphi\big(\widetilde{b}_u(s)u)=\varphi(Pu).
\end{equation}
By Proposition~\ref{prop_properties_phi}v), the left-hand side of (\ref{eq2_prop_b_fcn}) is equal to $\widetilde{b}_u(s-m)\varphi(u)$. 
On the other hand, it follows from properties ii) and iv) in Proposition~\ref{prop_properties_phi}  that the right-hand side of (\ref{eq2_prop_b_fcn}) is equal to
$$\sum_{i=1}^d\varphi(\partial_{y_i}\partial_{z}^{-1}Q_iu)=-\sum_{i=1}^dt_i\cdot\varphi(Q_iu).$$
Note that for every $i$, we have
$$Q_i\in\sum_{\alpha,\beta,a,b}\cD_Xy^{\alpha}\partial_y^{\beta}z^a\partial_z^b,$$
where $a,b\in\Z$ are such that $a\geq b$ and $a\geq 0$, while $\alpha,\beta\in\Z^d$ are such that $|\alpha|\leq |\beta|$ (this follows from the condition on $a$ and $b$
and the fact that ${\rm deg}(Q_i)=0$). We can write $z^a\partial_z^b=(z^a\partial_z^a)\partial_z^{b-a}\in \C[s]\cdot\partial_z^{b-a}$
(see Lemma~\ref{lem2_appendix}
 in Appendix), hence using Proposition~\ref{prop_properties_phi} we conclude that for every $i$, we have
 $$\varphi(Q_iu)\in \sum_{|\alpha|\leq |\beta|}\cD_X[s]\cdot \partial_t^{\alpha}t^{\beta}\cdot \varphi(u)\subseteq V^0{\mathcal R}\cdot \varphi(u).$$
 We thus conclude that
 $$\widetilde{b}_u(s-m)\varphi(u)\in \sum_{i=1}^dt_i\cdot V^0{\mathcal R}\cdot \varphi(u)\subseteq V^1{\mathcal R}\cdot \varphi(u),$$
 hence by the definition of $b_{\varphi(u)}(s)$, we have
 \begin{equation}\label{eq_div1}
 b_{\varphi(u)}(s)\quad\text{divides}\quad \widetilde{b}_u(s-m).
 \end{equation}
 
 Going in the opposite direction, it follows from the definition of $b_{\varphi(u)}(s)$ that, working locally on $X$, there is $T\in V^1{\mathcal R}$
 such that 
 $$b_{\varphi(u)}(s)\varphi(u)=T\varphi(u).$$
 We can write
 $$T =\sum_{|\alpha|\geq |\beta|+1}T_{\alpha,\beta}t^{\alpha}\partial_t^{\beta},\quad\text{with}\quad T_{\alpha,\beta}\in\cD_X.$$
 Let us consider
 $$\widetilde{T}=\sum_{|\alpha|\geq |\beta|+1}T_{\alpha,\beta}(-y)^{\alpha}\partial_y^{\beta}\partial_z^{|\beta|-|\alpha|}\in V^1\widetilde{\mathcal R},$$
 of degree $0$, so $\widetilde{T}u\in \widetilde{B}_g^{(m)}$. 
 Using Proposition~\ref{prop_properties_phi}, we see that
 $$\varphi\big(b_{\varphi(u)}(s+m)u\big)=b_{\varphi(u)}(s)\varphi(u)=T\varphi(u)=\varphi(\widetilde{T} u).$$
 Since the restriction of $\varphi$ to $\widetilde{B}_g^{(m)}$ is injective, we conclude that
 $$b_{\varphi(u)}(s+m)u=\widetilde{T} u.$$
 We deduce using the definition of $\widetilde{b}_u$ that
 \begin{equation}\label{eq_div2}
 \widetilde{b}_u(s)\quad\text{divides}\quad b_{\varphi(u)}(s+m).
 \end{equation}
 By combining (\ref{eq_div1}) and (\ref{eq_div2}), we see that $\widetilde{b}_u(s-m)$ and $b_{\varphi(u)}(s)$ are monic polynomials that divide each other, hence they are equal. 
\end{proof} 

\begin{rmk}\label{rmk_relation_b_function}
Note that if we apply the above proposition for $u=\delta_g\in \widetilde{B}_g^{(0)}$, then we recover the fact that the Bernstein-Sato polynomial of the ideal 
$(f_1,\ldots,f_d)$ coincides with the microlocal $b$-function of $\delta_g$ (which, as we have mentioned in Section~\ref{section_review}, is equal to $b_g(s)/(s+1)$).
This is the main result in \cite{Mustata}. 
\end{rmk}

\section{The minimal exponent of a local complete intersection subscheme}

Our goal in this section is to define and study the minimal exponent of a local complete intersection subscheme. Let $X$ be a smooth, irreducible complex algebraic variety
and $Z$ a (nonempty) proper closed subscheme of $X$.

\begin{rmk}\label{lct_bounded_codim}
We note that in general we have $\lct(X, Z)\leq {\rm codim}_X(Z)$. Indeed, the inclusion $Z_{\rm red}\hookrightarrow Z$ implies $\lct(X, Z)\leq \lct(X, Z_{\rm red})$, hence we may assume 
that $Z$ is reduced. If $U$ is an open subset of $X$ such that $U\cap Z$ is smooth and irreducible of codimension $r$ in $U$, then
$\lct(X, Z)\leq \lct(U, Z\cap U)=r$. 
\end{rmk}

\begin{rmk}\label{rel_with_lct2}
Note also that if $Z$ is Cohen-Macaulay, of pure codimension $r$, and
$\lct(X,Z)=r$, then $Z$ is reduced. Indeed, if this is not the case, then $Z$ is not generically reduced (being Cohen-Macaulay). 
It follows that we have an irreducible component $Z_0$ of $Z$ such that the local ring $\cO_{Z,Z_0}$ is not a field; therefore the embedding dimension $m$ of
$\cO_{Z,Z_0}$ is positive. After possibly replacing $X$ by a suitable open subset that intersects $Z_0$ nontrivially, we may assume that 
$Z$ is irreducible, $Z_0$ is smooth, and
there is a smooth, irreducible 
subvariety $W$ of $X$ of dimension $\dim(Z_0)+m=n-r+m$ such that $Z$ is contained in $W$ and, in fact, 
the ideal defining $Z$ in $W$ is contained in the ideal
$I_{Z_0/W}^2$, where $I_{Z_0/W}$ is the ideal defining $Z_0$ in $W$. By considering the exceptional divisor on the blow-up of $W$ along $Z_0$ and the description of $\lct(W,Z)$ in terms of
log resolutions (see \cite[Example~9.3.16]{Lazarsfeld}), it follows easily that $\lct(W,Z)\leq{\rm codim}_W(Z_0)/2=m/2$.
On the other hand, we have $\lct(X,Z)=\lct(W,Z)+{\rm codim}_X(W)$ (see for example \cite[Proposition~2.6]{Mustata0}). Therefore we have
$$\lct(X,Z)=r-m+\tfrac{m}{2}=r-\tfrac{m}{2}\leq r-\tfrac{1}{2}.$$
\end{rmk}

Suppose now that $Z$ is a local complete intersection, of pure codimension $r\geq 1$ in $X$. 
We first consider the case when $Z$ is globally a complete intersection, that is, there are $f_1,\ldots,f_r\in\cO_X(X)$ such that $Z$ is defined by the ideal
generated by $f_1,\ldots,f_r$. In this case we consider the $V$-filtration on $B_{\bff}$. Note that by (\ref{formula_lct}) and Remark~\ref{lct_bounded_codim},
we have $\delta_{\bff}\not\in V^{\gamma}B_{\bff}$ for $\gamma>r$. 

\begin{defi}
We define the minimal exponent $\widetilde{\alpha}(Z)$, as explained in the introduction, by the formula
\begin{equation}\label{eq3_intro_v2}
\widetilde{\alpha}(Z)
= \left\{
\begin{array}{cl}
\sup\{\gamma>0\mid \delta_{\mathbf f}\in V^{\gamma}B_{\bff}\} , & \text{if}\,\,\delta_{\mathbf f}\not\in V^rB_{\mathbf f}; \\[2mm]
\sup\{r-1+q+\gamma\mid F_qB_{\bff}\subseteq V^{r-1+\gamma}B_{\bff}\} , & \text{if}\,\,\delta_{\mathbf f}\in V^rB_{\mathbf f},
\end{array}\right.
\end{equation}
where in the latter case, the supremum is over all nonnegative integers $q$ and all rational numbers $\gamma\in (0,1]$ with the property that
$\partial_t^{\beta}\delta_{\mathbf f}\in V^{r-1+\gamma}B_{\bff}$ for all $\beta=(\beta_1,\ldots,\beta_r)\in\Z_{\geq 0}^r$, with $\beta_1+\ldots+\beta_r\leq q$. 
We note that the value of $\widetilde{\alpha}(Z)$ does not depend just on $Z$, but also on $X$ (for the precise way in which it depends on $X$, see
Proposition~\ref{dep_on_X} below). Because of this, whenever the ambient variety is not clear from the context, we write $\widetilde{\alpha}(X,Z)$
instead of $\widetilde{\alpha}(Z)$. 
\end{defi}

\begin{rmk}\label{max_vs_sup}
Since the $V$-filtration is left continuous, the supremum in the definition is a maximum, unless $\widetilde{\alpha}(Z)=\infty$ (which happens if and only if 
$Z$ is smooth, see Remark~\ref{rmk_singular} below). 
\end{rmk}

\begin{rmk}\label{rmk_min_exp_lct}
It follows from the definition and (\ref{formula_lct}) that
\begin{equation}\label{eq_rmk_min_exp_lct}
\lct(X,Z)=\min\big\{\widetilde{\alpha}(Z),r\big\}.
\end{equation}
\end{rmk}

\begin{rmk}\label{rmk_defi}
If $q_1$ and $q_2$ are nonnegative integers and $\gamma_1,\gamma_2\in (0,1]$ are rational numbers such that $q_1+\gamma_1\geq q_2+\gamma_2$,
and if $F_{q_1}B_{\bff}\subseteq V^{r-1+\gamma_1}B_{\bff}$, then 
$F_{q_2}B_{\bff}\subseteq V^{r-1+\gamma_2}B_{\bff}$. Indeed, this is clear if $q_1=q_2$, and if this is not the case,
then our hypothesis implies $q_2=q_1-1$ and it is enough to show that if $u=\partial_t^{\beta}\delta_{\bff}$, with $|\beta|\leq q_1-1$, then $u\in V^rB_{\bff}$. 
The assumption implies $\partial_{t_i}u\in V^{r-1}B_{\bff}$ for all $i$ and thus $t_i\partial_{t_i}u\in V^{r}B_{\bff}$. We conclude that 
$$(s+r)u=(-\partial_{t_1}t_1-\ldots-\partial_{t_r}t_r+r)u=-\sum_{i=1}^rt_i\partial_{t_i}u\in V^rB_{\bff}.$$
For every $\gamma\neq r$, since $s+\gamma$ is nilpotent on ${\rm Gr}_V^{\gamma}(B_{\bff})$, it follows that $s+r$ is invertible on this graded piece.
Since $(s+r)u\in V^rB_{\bff}$, using the discreteness of the $V$-filtration, we conclude that $u\in V^rB_{\bff}$.

The same argument shows that in order to have $F_qB_{\bff}\subseteq V^{r-1+\gamma}B_{\bff}$, it is enough to require $\partial_t^{\beta}\delta_{\bff}\in V^{r-1+\gamma}B_{\bff}$ 
for all $\beta$ with $|\beta|=q$. 
\end{rmk}

\begin{rmk}\label{union_open_sets}
Suppose that $U_1,\ldots,U_N$ are open subsets of $X$ such that all $Z\cap U_i$ are nonempty and
$Z\subseteq U_1\cup\ldots\cup U_N$. Since $\partial_t^{\beta}\delta_{\bff}\in V^{\gamma}B_{\bff}$
if and only if the same containment holds on each $U_i$ (note that the condition automatically holds over $X\smallsetminus Z$ by Remark~\ref{restr_complement_Z}),
it follows using also the assertion in Remark~\ref{rmk_defi} that
$$\widetilde{\alpha}(X, Z)=\min_{1\leq i\leq N}\widetilde{\alpha}(U_i, Z\cap U_i).$$
\end{rmk}

\begin{rmk}\label{indep_defi}
The definition of $\widetilde{\alpha}(Z)$ does not depend on the choice of $f_1,\ldots,f_r$. By 
taking an affine open cover of $X$ and using
Remark~\ref{union_open_sets}, we see that it is enough to prove this assertion when $X$ is affine. 
Suppose now that we have regular functions $f_1,\ldots,f_r$ and $g_1,\ldots,g_r$ such that $(f_1,\ldots,f_r)=(g_1,\ldots,g_r)$. 
The condition $\delta_{\bff}\in V^{\gamma}B_{\bff}$ is equivalent to $\lct(X,Z)\geq \gamma$, hence it is independent of
the choice of generators for the ideal. We thus only need to show that if $q\in\Z_{\geq 0}$ and $\gamma\in (0,1]$ is a rational number, then
$F_qB_{\bff}\subseteq V^{r-1+\gamma}B_{\bff}$ if and only if $F_qB_{\mathbf g}\subseteq V^{r-1+\gamma}B_{\mathbf g}$.

Let us write $g_i=\sum_ja_{i,j}f_j$ for $1\leq i\leq r$. Note that $D={\rm det}(a_{i,j})$ does not vanish at any point in $Z$. After replacing $X$ by the complement 
of the zero-locus of $D$, we may assume that  $D$ is invertible (see 
Remark~\ref{union_open_sets}). In this case
$$u\colon X\times\A^r\to X\times \A^r,\quad u(x,t_1,\ldots,t_r)=\left(x,\sum_ja_{1,j}t_j,\ldots,\sum_ja_{r,j}t_j\right)$$
is an isomorphism such that $u(X\times\{0\})=X\times\{0\}$ and $B_{\mathbf g}=u_+B_{\bff}$.
We thus have an isomorphism $u^*$ of ${\mathcal R}=\cD_X\langle t_1,\ldots,t_r,\partial_{t_1},\ldots,\partial_{t_r}\rangle$
that keeps $\cD_X$ fixed and maps each $t_i$ to a linear form in $t_1,\ldots,t_r$ and an isomorphism of ${\mathcal R}$-modules
$\tau\colon B_{\mathbf g}\to B_{\bff}$ (where we view $B_{\bff}$ as an ${\mathcal R}$-module via $u^*$). This clearly has the property
that $\tau(F_qB_{\mathbf g})=F_qB_{\bff}$ for every $p$ and using the uniqueness of the $V$-filtration, we see that 
$\tau(V^{\gamma}B_{\mathbf g})=V^{\gamma}B_{\bff}$ for all $\gamma\in\Q$. It is then clear that we have 
$F_qB_{\bff}\subseteq V^{r-1+\gamma}B_{\bff}$ if and only if $F_qB_{\mathbf g}\subseteq V^{r-1+\gamma}B_{\mathbf g}$. 
\end{rmk}

Suppose now that $Z$ is an arbitrary local complete intersection closed subscheme of $X$, of pure codimension $r\geq 1$. We can find open subsets
$U_1,\ldots,U_N$ of $X$ with $Z\subseteq\bigcup_{i=1}^NU_i$ such that each $Z\cap U_i$ is nonempty and defined in $U_i$ by an ideal generated by
$r$ regular functions on $U_i$. In particular, each $\widetilde{\alpha}(U_i, Z\cap U_i)$ is well-defined.

\begin{defi}
With the above notation, the \emph{minimal exponent} of $Z$ is
$$\widetilde{\alpha}(Z)=\widetilde{\alpha}(X,Z):=\min_{1\leq i\leq N}\widetilde{\alpha}(U_i, Z\cap U_i).$$
\end{defi}

\begin{rmk}\label{union_open_sets2}
It is easy to see, using Remark~\ref{union_open_sets}, that the definition is independent of the choice of open subsets $U_1,\ldots,U_N$. Moreover, given 
any open subsets $V_1,\ldots,V_m$ of $X$, with $Z\subseteq\bigcup_{j=1}^mV_j$ such that each $Z\cap V_j$ is nonempty, we have
$$\widetilde{\alpha}(X, Z)=\min_{1\leq j\leq m}\widetilde{\alpha}(V_j, Z\cap V_j).$$
\end{rmk}

\begin{rmk}
In the case of hypersurfaces (that is, $r=1$), we recover the usual definition of the minimal exponent by (\ref{eq_char_min_exp}).
\end{rmk}

\begin{prop}\label{prop_smooth_morphism}
If $\pi\colon Y\to X$ is a surjective smooth morphism of smooth, irreducible varieties, and $Z$ is a local complete intersection closed subscheme of $X$, of pure codimension $r$,
then $\widetilde{\alpha}(X,Z)=\widetilde{\alpha}\big(Y,\pi^{-1}(Z)\big)$.
\end{prop}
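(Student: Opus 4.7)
The statement is local on $X$, so by choosing affine open neighborhoods and using Remark~\ref{union_open_sets2}, I may assume $Z$ is globally a complete intersection, defined by regular functions $f_1,\ldots,f_r\in\cO_X(X)$. Write $\bff=(f_1,\ldots,f_r)$ and $\mathbf{g}=\pi^*\bff=(\pi^*f_1,\ldots,\pi^*f_r)$, so $\pi^{-1}(Z)$ is defined by $\mathbf{g}$ and has pure codimension $r$ in $Y$. Consider the induced smooth surjective map $\widetilde{\pi}=\pi\times{\rm id}\colon Y\times\A^r\to X\times\A^r$; my plan is to show that under this map the $V$-filtrations on $B_{\bff}$ and $B_{\mathbf g}$ correspond.

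The main step is the following compatibility. By base change for $\cD$-module pushforward along the graph embeddings (which are closed immersions, while $\widetilde{\pi}$ is smooth), there is a canonical isomorphism
$$B_{\mathbf g}\simeq \widetilde{\pi}^*B_{\bff}$$
of $\cR$-modules (where $\cR$ has the evident meanings on $X$ and $Y$), sending $\partial_t^{\beta}\delta_{\mathbf g}$ to the pullback of $\partial_t^{\beta}\delta_{\bff}$ and preserving the Hodge filtration. I then check that the filtration $\widetilde{\pi}^*V^{\bullet}B_{\bff}$ on $B_{\mathbf g}$ satisfies each of the defining axioms (i)--(iv) for a $V$-filtration recalled in Section~\ref{section_review}: the actions of $t_i$, $\partial_{t_i}$, and $s+\gamma$ commute with the flat pullback $\widetilde{\pi}^*$, so the shift properties and the nilpotency of $s+\gamma$ on ${\rm Gr}_V^{\gamma}$ transfer immediately; discreteness and left-continuity are preserved because $\widetilde{\pi}^*$ is exact; and the $V^0\cR$-coherence of each $\widetilde{\pi}^*V^{\gamma}B_{\bff}$ follows from the flatness of $\widetilde{\pi}$ combined with the fact that, locally, $V^{\gamma}B_{\bff}$ is finitely generated over $V^0\cR$. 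By the uniqueness of the $V$-filtration \cite{Kashiwara}, this yields
$$V^{\gamma}B_{\mathbf g}=\widetilde{\pi}^*V^{\gamma}B_{\bff}\quad\text{for every}\quad \gamma\in\Q.$$

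Since $\pi$ is surjective and smooth, so is $\widetilde{\pi}$, hence $\widetilde{\pi}^*$ is faithfully flat. Thus for any $u\in B_{\bff}$ we have $u\in V^{\gamma}B_{\bff}$ if and only if $\widetilde{\pi}^*u\in V^{\gamma}B_{\mathbf g}$. Applied to $u=\partial_t^{\beta}\delta_{\bff}$ for each multi-index $\beta$, the conditions appearing in the definition (\ref{eq3_intro_v2}) of $\widetilde{\alpha}(X,Z)$ become equivalent to the corresponding conditions defining $\widetilde{\alpha}(Y,\pi^{-1}(Z))$, proving the desired equality.

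The main obstacle will be to rigorously justify that the $V$-filtration is preserved under the smooth pullback $\widetilde{\pi}^*$, in particular the coherence of $\widetilde{\pi}^*V^{\gamma}B_{\bff}$ as a $V^0\cR$-module; this relies on the flatness of $\widetilde{\pi}$ together with the local finite-generation of $V^{\gamma}B_{\bff}$ over $V^0\cR$. Once this compatibility is in hand, surjectivity of $\pi$ (via faithful flatness) is exactly what allows transferring the defining containments of $\widetilde{\alpha}$ in both directions.
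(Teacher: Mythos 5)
Your proposal is correct and follows essentially the same route as the paper: reduce to the global complete intersection case, observe that smooth pullback gives an isomorphism $B_{\mathbf g}\simeq\pi^*B_{\bff}$ compatible with the Hodge and $V$-filtrations (the paper asserts this directly; you justify it by checking the $V$-filtration axioms and invoking uniqueness), and then conclude from the definition of $\widetilde{\alpha}$. Your explicit invocation of faithful flatness (coming from surjectivity) to transfer membership conditions in both directions is a useful clarification of a step the paper leaves implicit, but it is the same argument.
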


\begin{proof}
We may and will assume that $Z$ is defined in $X$ by the ideal generated by $f_1,\ldots,f_r\in\cO_X(X)$ and let $g_i=f_i\circ\pi$ for $1\leq i\leq r$. 
Using the fact that $\pi$ is smooth, it is then straightforward to see that we have an isomorphism
$$B_{\mathbf g}\simeq \pi^*B_{\bff}$$
such that for every $p\in\Z_{\geq 0}$ and every $\alpha\in\Q$, we get
$$F_pB_{\mathbf g}\simeq \pi^*F_pB_{\bff}\quad\text{and}\quad V^{\alpha}B_{\mathbf g}\simeq \pi^*V^{\alpha}B_{\bff}.$$
The assertion in the proposition then follows directly from the definition of the minimal exponent. 
\end{proof}

Our next goal is to describe the minimal exponent of $Z$ via the minimal exponent of a hypersurface. 
Suppose that $Z$ is a nonempty closed subscheme of $X$, of pure codimension $r\geq 1$,
whose ideal is generated by $f_1,\ldots,f_r\in\cO_X(X)$. We put $g=\sum_{i=1}^rf_iy_i\in\cO_X(X)[y_1,\ldots,y_r]$. Let
$U=X\times \big(\A^r\smallsetminus\{0\}\big)\subseteq Y=X\times\A^r$. We will freely use the notation in Section~\ref{section_rel_Vfilt}. 
The following is the key observation:

\begin{lem}\label{lem_obs}
If $\gamma\in\Q$ and $\alpha\in\Z_{\geq_0}^r$ are such that $y^{\alpha}\partial_z^{|\alpha|}\delta_g\in V^{\gamma}\widetilde{B}_g\smallsetminus V^{>\gamma}\widetilde{B}_g$
and $y^{\alpha}\partial_z^{|\alpha|}\delta_g\vert_U\in V^{>\gamma}\widetilde{B}_g\vert_U$, then $\gamma\geq r$ and $\gamma\in\Z$. 
\end{lem}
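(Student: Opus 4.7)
The element $u=y^{\alpha}\partial_z^{|\alpha|}\delta_g$ lies in the degree-zero piece $\widetilde{B}_g^{(0)}$, so by Lemma~\ref{lem_theta} (with $m=0$) it satisfies $(\theta_y-s)u=0$. Let $\bar{u}$ denote the image of $u$ in ${\rm Gr}_V^{\gamma}(\widetilde{B}_g)$. The first hypothesis says $\bar{u}\neq 0$, while the second says $\bar{u}|_U=0$; hence $\bar{u}$ is a nonzero local section of the sheaf ${\rm Gr}_V^{\gamma}(\widetilde{B}_g)$ on $Y$ whose support is contained in $Y\smallsetminus U=X\times\{0\}$. Property (iv) of the microlocal $V$-filtration says $s+\gamma$ is nilpotent on ${\rm Gr}_V^{\gamma}(\widetilde{B}_g)$, and combining this with $\theta_y\bar{u}=s\bar{u}$ yields $(\theta_y+\gamma)^N\bar{u}=0$ for some $N\gg 0$; equivalently, $\theta_y$ acts on $\bar{u}$ with generalized eigenvalue $-\gamma$.

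Next form the $\cD_Y$-submodule $\cN:=\cD_Y\cdot\bar{u}\subseteq{\rm Gr}_V^{\gamma}(\widetilde{B}_g)$. As the cyclic module generated by $\bar{u}$ over the Noetherian sheaf $\cD_Y$, it is $\cD_Y$-coherent, and its support lies in the smooth codimension-$r$ subvariety $\iota\colon X\times\{0\}\hookrightarrow Y$. By Kashiwara's equivalence, $\cN\simeq\iota_{+}M$ for a coherent $\cD_X$-module $M$, and as an $\cO_X$-module $\iota_{+}M=\bigoplus_{\beta\in\Z_{\geq 0}^r}M\otimes\partial_y^{\beta}$ with the standard $\cD_Y$-action. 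A direct computation using $y_i\cdot M=0$ and $[\partial_{y_i},y_i]=1$ gives
\[
y_i(m\otimes\partial_y^{\beta})=-\beta_i(m\otimes\partial_y^{\beta-e_i}),\qquad\theta_y(m\otimes\partial_y^{\beta})=-(|\beta|+r)(m\otimes\partial_y^{\beta}),
\]
so $\theta_y$ is diagonalizable on $\cN$ with spectrum contained in $\Z_{\leq-r}$.

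Because $\theta_y$ is diagonalizable on $\cN$ and $(\theta_y+\gamma)^N\bar{u}=0$, $\bar{u}$ must in fact be a genuine $\theta_y$-eigenvector with eigenvalue $-\gamma$; hence $-\gamma\in\Z_{\leq-r}$, i.e.\ $\gamma\in\Z$ and $\gamma\geq r$, as required. The main obstacle is the middle step: one should pass to the cyclic $\cD_Y$-submodule generated by $\bar{u}$ (whose coherence is automatic) rather than trying to analyse the whole ${\rm Gr}_V^{\gamma}(\widetilde{B}_g)$, and one must pin down the $\theta_y$-spectrum of the direct image $\iota_{+}M$ along a codimension-$r$ embedding. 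Once these two points are in place, the tension between the nilpotence of $s+\gamma$ and the diagonalizability of $\theta_y$ forces both the integrality and the lower bound on $\gamma$ simultaneously.
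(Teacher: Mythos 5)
Your proof is correct, and it takes a genuinely different route from the paper's. You pass to the cyclic $\cD_Y$-submodule $\cN=\cD_Y\cdot\bar{u}$ of ${\rm Gr}_V^{\gamma}(\widetilde{B}_g)$, observe that it is coherent and supported on the smooth codimension-$r$ subvariety $X\times\{0\}$, and invoke Kashiwara's equivalence to write $\cN\simeq\iota_+M$; the explicit formula for the $\cD_Y$-action on $\iota_+M$ then shows that $\theta_y$ is \emph{semisimple} with spectrum contained in $\Z_{\leq -r}$, so the nilpotence of $\theta_y+\gamma=s+\gamma$ on $\bar{u}$ forces $-\gamma$ into that spectrum. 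The paper instead argues purely at the level of one element: since $\bar{u}$ is killed by a power of $(y_1,\ldots,y_r)$, there is a multi-index $\beta$ with $v=y^{\beta}\bar{u}\neq 0$ and $y_iv=0$ for all $i$; then $v$ lies in degree $m=|\beta|$, Lemma~\ref{lem_theta} gives $(\theta_y-s)v=mv$, while $y_iv=0$ gives $\theta_yv=-rv$ directly, so $(s+m+r)v=0$ and nilpotence of $s+\gamma$ forces $\gamma=m+r$. Your structural approach is conceptually cleaner and explains \emph{why} the eigenvalues are what they are (and why $\theta_y$ is semisimple on a module supported on the zero section), at the cost of invoking Kashiwara's theorem and the coherence/holonomicity of ${\rm Gr}_V^{\gamma}(\widetilde{B}_g)$; the paper's socle-element argument is essentially the bare-hands special case of your eigenvalue computation and stays entirely self-contained. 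One small tightening: the coherence of $\cN$ should be justified via the fact that ${\rm Gr}_V^{\gamma}(\widetilde{B}_g)$ is itself a coherent (indeed holonomic) $\cD_Y$-module, so any $\cD_Y$-submodule of it is coherent — the phrase ``cyclic over the Noetherian sheaf $\cD_Y$'' by itself leaves the quasi-coherence of $\cN$ implicit.
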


\begin{proof}
By assumption, if $u$ is the class of $y^{\alpha}\partial_z^{|\alpha|}\delta_g$ in ${\rm Gr}_V^{\gamma}(\widetilde{B}_g)$, then $u\neq 0$, but there is $N$ such that
$(y_1,\ldots,y_r)^Nu=0$. This implies that there is $\beta\in\Z_{\geq 0}$ such that $v=y^{\beta}u\neq 0$, but $(y_1,\ldots,y_r)v=0$. Note that 
$u\in {\rm Gr}_V^{\gamma}(\widetilde{B}^{(0)}_g)$, hence $v\in {\rm Gr}_V^{\gamma}(\widetilde{B}^{(m)}_g)$, where $m=|\beta|\geq 0$. 
By Lemma~\ref{lem_theta}, we have $(\theta_y-s)v=mv$.

On the other hand, since $y_iv=0$ for all $i$, we have 
$$\theta_yv=\sum_{i=1}^ry_i\partial_{y_i}u_i=-rv+\sum_{i=1}^r\partial_{y_i}y_iv=-rv,$$
hence $(s+m+r)v=\theta_yv+rv=0$. By definition of the $V$-filtration, $s+\gamma$ is nilpotent on ${\rm Gr}^{\gamma}_V(\widetilde{B}_g)$,
and thus $s+\lambda$ is invertible on ${\rm Gr}^{\gamma}_V(\widetilde{B}_g)$ for every $\lambda\neq\gamma$. Since $v\neq 0$,
we conclude that $\gamma=m+r\geq r$, which completes the proof.
\end{proof}

We can prove now the equality $\widetilde{\alpha}(Z)=\widetilde{\alpha}(g\vert_U)$.

\begin{proof}[Proof of Theorem~\ref{thm2_intro}]
It is enough to show that for every $\beta\in\Q_{>0}$, we have $\widetilde{\alpha}(Z)\geq\beta$ if and only if $\widetilde{\alpha}(g\vert_U)\geq \beta$. 
We treat separately the cases when $\beta\leq r$ and when $\beta>r$.

If $\beta\leq r$, then by definition we have $\widetilde{\alpha}(Z)\geq\beta$ if and only if $\delta_{\bff}\in V^{\beta}B_{\bff}$. By Theorem~\ref{thm_V_filtration},
this is equivalent to $\delta_g\in V^{\beta}\widetilde{B}_g$. On the other hand, it follows from (\ref{eq_MLCT}) that $\widetilde{\alpha}(g\vert_U)\geq \beta$
if and only if $\delta_g\in V^{\beta}\widetilde{B}_g$ on $U$. It is clear that if $\delta_g\in V^{\beta}\widetilde{B}_g$ then this also holds after restricting to $U$
and we need to show that the converse holds. Arguing by contradiction,  let us assume that $\delta_g\in V^{\beta}\widetilde{B}_g$ on $U$, but
$\delta_g\not\in V^{\beta}\widetilde{B}_g$. In this case, let
$\beta'=\max\{\gamma\in\Q_{\geq 0}\mid \delta_g\in V^{\gamma}\widetilde{B}_g\}<\beta\leq r$. By 
assumption, we have $\delta_g\in V^{>\beta'}\widetilde{B}_g$ on $U$, hence 
Lemma~\ref{lem_obs} implies $\beta'\geq r$, a contradiction.

If $\beta>r$, let us write $\beta=r-1+q+\gamma$, where $q$ is a positive integer and $\gamma\in (0,1]$ is a rational number.
By definition, we have $\widetilde{\alpha}(Z)\geq\beta$ if and only if $\partial_t^{\alpha}\delta_{\bff}\in V^{r-1+\gamma}B_{\bff}$ for every $\alpha\in\Z_{\geq 0}^r$
with $|\alpha|\leq q$. By Theorem~\ref{thm_V_filtration}, this holds if and only if $y^{\alpha}\partial_z^{|\alpha|}\delta_g\in V^{r-1+\gamma}\widetilde{B}_g$
for all such $\alpha$. On the other hand, it follows from (\ref{eq_MLCT}) that $\widetilde{\alpha}(g\vert_U)\geq \beta$ if and only if 
$\delta_g\in V^{\beta}\widetilde{B}_g$ on $U$. 

Note that $U=U_1\cup\ldots\cup U_r$, where $U_i$ is the complement of the zero-locus of $y_i$. We thus see that if $y_i^q\partial_z^{q}\delta_g\in V^{r-1+\gamma}
\widetilde{B}_g$, we have $\partial_z^{q}\delta_g\in V^{r-1+\gamma}
\widetilde{B}_g$ on $U_i$, and thus $\delta_g\in V^{\beta}\widetilde{B}_g$ on $U_i$ by (\ref{eq_mult_partial}). 
We conclude that if $\widetilde{\alpha}(Z)\geq\beta$, then $\delta_g\in V^{\beta}\widetilde{B}_g$ on $U$ and thus $\widetilde{\alpha}(g\vert_U)\geq \beta$.

In order to prove the converse, we argue by contradiction: we assume that $\delta_g\in V^{\beta}\widetilde{B}_g$ on $U$, but
there is $\alpha\in\Z_{\geq 0}$ with $|\alpha|\leq q$ such that $y^{\alpha}\partial_z^{|\alpha|}\delta_g\not \in V^{r-1+\gamma}\widetilde{B}_g$.
Let 
$$\beta'=\max\{\eta\in\Q_{\geq 0}\mid y^{\alpha}\partial_z^{|\alpha|}\delta_g\in V^{\eta}\widetilde{B}_g\}.$$
Note that $\beta'<r-1+\gamma\leq r$. On the other hand, since $\delta_g\in V^{\beta}\widetilde{B}_g$ on $U$,
we have $y^{\alpha}\partial_z^{|\alpha|}\delta_g\in V^{\beta-|\alpha|}\widetilde{B}_g\subseteq V^{>\beta'}\widetilde{B}_g$ on $U$.
Applying Lemma~\ref{lem_obs}, we get $\beta'\geq r$, a contradiction.
\end{proof}

\begin{prop}\label{dep_on_X}
If $Z$ is a local complete intersection scheme of pure
dimension and $Z\hookrightarrow X$ is a closed embedding, where $X$ is a smooth, irreducible variety, then
$\widetilde{\alpha}(X,Z)-\dim(X)$ does not depend on $X$, but only on $Z$.
\end{prop}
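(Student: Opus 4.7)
My plan is to compare two closed embeddings $i_1\colon Z\hookrightarrow X_1$ and $i_2\colon Z\hookrightarrow X_2$ through the graph closed embedding
$$j\colon Z\hookrightarrow X_1\times X_2,\qquad z\mapsto\bigl(i_1(z),i_2(z)\bigr),$$
by establishing the two equalities
$$\widetilde{\alpha}\bigl(X_1\times X_2,j(Z)\bigr)=\widetilde{\alpha}(X_i,Z)+\dim X_{3-i},\qquad i=1,2,\qquad (\dagger)$$
which together yield $\widetilde{\alpha}(X_1,Z)-\dim X_1=\widetilde{\alpha}(X_2,Z)-\dim X_2$, as required.

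The main input is a key lemma: for every closed embedding of a pure-codimension local complete intersection $Z\hookrightarrow X$ and every integer $k\geq 0$,
$$\widetilde{\alpha}(X\times\A^k,\,Z\times\{0\})=\widetilde{\alpha}(X,Z)+k.\qquad(\star)$$
To prove $(\star)$, write $Z$ locally as defined by a regular sequence $f_1,\ldots,f_r$ on $X$ and apply Theorem~\ref{thm2_intro} to both sides: the left-hand side equals $\widetilde{\alpha}(\widetilde g\vert_{\widetilde U})$ with $\widetilde g=g+h$, where $g=\sum_{i=1}^r f_is_i$ on $X\times\A^r_s$, $h=\sum_{j=1}^k y_jt_j$ on $\A^k_y\times\A^k_t$, and $\widetilde U=\{(s,t)\neq 0\}$; the right-hand side equals $\widetilde{\alpha}(g\vert_U)+k$ with $U=\{s\neq 0\}$. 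Since $\widetilde g$ splits as a sum in disjoint sets of variables, its singular points on $\{\widetilde g=0\}$ are exactly the points $(x,0,s,0)$ with $(x,s)\in\Sing(g)\cap\{g=0\}$, and those lie in $\widetilde U$ precisely when $s\neq 0$. Saito's Thom--Sebastiani theorem for minimal exponents of hypersurfaces then gives, at each such point,
$$\widetilde{\alpha}_{(x,0,s,0)}(\widetilde g)=\widetilde{\alpha}_{(x,s)}(g)+\widetilde{\alpha}_{(0,0)}(h)=\widetilde{\alpha}_{(x,s)}(g)+k,$$
because $h$ is a nondegenerate quadratic form in $2k$ variables and hence has minimal exponent $k$ at the origin; taking the infimum over singular points yields $(\star)$.

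To prove $(\dagger)$ I argue locally around a point $z_0\in Z$, relying on Remark~\ref{union_open_sets2}. Since $X_2$ is smooth, an étale neighborhood of $i_2(z_0)$ is isomorphic to an open subset of $\A^{n_2}$ sending $i_2(z_0)$ to $0$, and after a further étale base change on $X_1$ near $i_1(z_0)$ we can lift the components of $i_2$ to regular functions on $V_1$, producing a morphism $\widetilde{\imath}_2\colon V_1\to X_2$ that extends $i_2\vert_{Z\cap V_1}$. The automorphism
$$\phi(x,y)=\bigl(x,\,y-\widetilde{\imath}_2(x)\bigr)$$
of $V_1\times X_2$ is then an isomorphism of pairs $(V_1\times X_2,j(Z))\simeq(V_1\times X_2,Z\times\{0\})$. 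Combining this with Proposition~\ref{prop_smooth_morphism} applied to the étale base changes (smooth of relative dimension $0$) and with $(\star)$ we obtain
$$\widetilde{\alpha}\bigl(V_1\times X_2,j(Z)\bigr)=\widetilde{\alpha}(V_1\times\A^{n_2},Z\times\{0\})=\widetilde{\alpha}(V_1,Z)+n_2=\widetilde{\alpha}(X_1,Z)+n_2$$
locally, and hence globally via Remark~\ref{union_open_sets2}. Swapping $X_1$ and $X_2$ gives the remaining half of $(\dagger)$.

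The most delicate point is the local extension of $i_2$ used in the comparison step: while it is automatic in the analytic category, producing an étale-algebraic extension requires a standard Artin-type approximation together with the smoothness of $X_2$. Once this technical step is in place, the rest is a clean combination of Theorem~\ref{thm2_intro}, Saito's Thom--Sebastiani theorem, and Proposition~\ref{prop_smooth_morphism}.
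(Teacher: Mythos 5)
Your overall strategy coincides with the paper's: the paper compares the two embeddings via the diagonal embedding $(i,i')\colon Z\hookrightarrow X\times X'$ (your graph embedding $j$), reduces to the model case $Z\times\{0\}\hookrightarrow X\times\A^m$, and there applies Theorem~\ref{thm2_intro} together with Saito's Thom--Sebastiani theorem. Your proof of the key lemma $(\star)$ is a correct, slightly more direct variant of the paper's: where the paper inducts on $m$ and only needs the decomposition $U'=U'_1\cup U'_2$ (with the hypersurface smooth on $U'_2$), you treat all $k$ at once by identifying $\Sing\{\widetilde g=0\}\cap\widetilde U$ explicitly (essentially the computation of Lemma~\ref{singloc}) and then applying the local Thom--Sebastiani formula, using that $\sum_j y_jt_j$ is a nondegenerate quadratic form of rank $2k$ and hence has minimal exponent $k$ at the origin. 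That part is fine.

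There is, however, a genuine gap in the comparison step. The map $\phi(x,y)=\bigl(x,\,y-\widetilde{\imath}_2(x)\bigr)$ is not an automorphism of $V_1\times X_2$: the expression $y-\widetilde{\imath}_2(x)$ has no meaning in a general smooth variety $X_2$ (there is no group law), and even after replacing $X_2$ by an open subset of $\A^{n_2}$ the translation does not preserve that open subset, so $\phi$ is at best an étale map to $V_1\times\A^{n_2}$, not an automorphism. Relatedly, there is no need for Artin approximation, and no need to lift $i_2$ as a morphism to $X_2$ at all. The correct mechanism, and the one used in the paper, works at the level of coordinate functions: choose regular parameters $w_1,\ldots,w_{n_2}$ of $\cO_{X_2,i_2(z_0)}$, lift the functions $\sigma_j:=i_2^*w_j\in\cO_{Z,z_0}$ to $\widetilde\sigma_j\in\cO_{X_1,i_1(z_0)}$ (which uses only the surjectivity of $\cO_{X_1}\to\cO_Z$), and observe that
$\bigl(\pr_1,\,\pr_2^*w_1-\pr_1^*\widetilde\sigma_1,\ldots,\pr_2^*w_{n_2}-\pr_1^*\widetilde\sigma_{n_2}\bigr)$
defines an étale morphism near $j(z_0)$ from an open subset of $X_1\times X_2$ to $X_1\times\A^{n_2}$ that carries $j(Z)$ into $X_1\times\{0\}$. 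One then concludes by the étale invariance of the (local) minimal exponent from Proposition~\ref{prop_smooth_morphism}, after restricting to a suitable open set where the preimage of $Z\times\{0\}$ is exactly $j(Z)$. As written, your automorphism claim would fail, so this step needs to be replaced by the étale coordinate argument above.
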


\begin{proof}
Let us consider two embeddings $i\colon Z\hookrightarrow X$ and $i'\colon Z\hookrightarrow X'$, where both $X$ and $X'$ are smooth irreducible varieties.
After comparing both these embeddings with the diagonal embedding $(i,i')\colon Z\hookrightarrow X\times X'$, we see that we may assume that there is a smooth morphism
$p\colon X'\to X$ such that $p\circ i'=i$. Given any point $x\in Z$, we can choose regular systems of parameters $x_1,\ldots,x_n$ in $\cO_{X,i(x)}$ 
and $p^*(x_1),\ldots,p^*(x_n),y_1,\ldots,y_m$ in $\cO_{X',i'(x)}$ such that $y_1,\ldots,y_m$ vanish along $i'(Z)$.  In a suitable neighborhood of $i'(x)$, we get 
an \'{e}tale morphism $X'\to X\times \A^m$, given by $(p,y_1,\ldots,y_m)$ that maps $i'(Z)$ inside $X\times\{0\}$. After taking a suitable open cover of $Z$ and using the invariance of
the minimal exponent under \'{e}tale morphisms (see Proposition~\ref{prop_smooth_morphism}), we see that it is enough to prove that if
$X'=X\times\A^m$ and $i'=(i,0)$, then $\widetilde{\alpha}(X',Z)=\widetilde{\alpha}(X,Z)+m$. Of course, arguing by induction on $m$, we see that it is enough to treat the case $m=1$. 

We may and will assume that $X$ is affine, and the ideal defining $Z$ in $X$ is generated by a regular sequence $f_1,\ldots,f_r\in\cO_X(X)$. Of course, in this case 
the ideal defining $Z$ in $X'$ is $(f_1,\ldots,f_r,z)$, where $z$ denotes the coordinate on $\A^1$. Using the description of the minimal exponent in Theorem~\ref{thm2_intro},
we see that it is enough to show that if we put $U=X\times \big(\A^r\smallsetminus\{0\}\big)$ and $U'=X'\times \big(\A^{r+1}\smallsetminus\{0\}\big)$, 
and 
$$g=\sum_{i=1}^rf_iy_i\quad\text{and}\quad g'=zy_{r+1}+\sum_{i=1}^rf_iy_i,$$
then $\widetilde{\alpha}(g'\vert_{U'})=\widetilde{\alpha}(g\vert_U)$. Note that we can write $U'=U'_1\cup U'_2$, where $U'_2$ is given by $y_{r+1}\neq 0$
and $U'_1=U\times {\rm Spec}\big(\C[y_{r+1},z]$\big). Note that the hypersurface defined by $g'$ in $U'_2$ is smooth, while $g'\vert_{U'_1}=g\vert_{U_1}+zy_{r+1}$, hence
$$\widetilde{\alpha}(g'\vert_{U'})=\widetilde{\alpha}(g'\vert_{U'_1})=\widetilde{\alpha}(g\vert_{U_1})+1,$$
where the second equality follows from the Thom-Sebastiani theorem for minimal exponents (see \cite[Theorem~0.8]{Saito_microlocal}).
\end{proof}

We next use the description of the minimal exponent in Theorem~\ref{thm2_intro} to prove some basic properties of this invariant.
Until the end of this section, we assume that $X$ is a smooth, irreducible, $n$-dimensional variety and $Z$ is a closed subscheme of $X$ that
is locally a complete intersection, of pure codimension $r$. Recall that if $f\in\cO_X(X)$ is nonzero and $x\in X$, then the multiplicity
${\rm mult}_x(f)$ is the largest $d$ such that $f\in\frm_x^d$, where $\frm_x$ is the ideal defining $x$.
Before introducing a local version of the minimal exponent, we make the following

\begin{rmk}\label{rmk_singular}
We have $\widetilde{\alpha}(Z)<\infty$ if and only if $Z$ is singular (and in this case we have\footnote{For a sharper estimate, see Remark~\ref{rem_sharper} below.} 
$\widetilde{\alpha}(Z)\leq\tfrac{n+r}{2}$).  In order to see this, we may and will assume that $Z$ is defined by the ideal generated by
$f_1,\ldots,f_r\in\cO_X(X)$, and let $g=\sum_{i=1}^rf_iy_i$. We use the fact that by Theorem~\ref{thm2_intro}, we have
 $\widetilde{\alpha}(Z)=\widetilde{\alpha}(g\vert_U)$, where $U=X\times
\big(\A^r\smallsetminus\{0\}\big)$.
If $Z$ is smooth, we may assume that $f_1,\ldots,f_r$ are part of a system of coordinates $f_1,\ldots,f_n$ on $X$
(that is, $df_1,\ldots,df_n$ trivialize $\Omega_X$). In this case it is easy to see\footnote{For a more general statement, see Lemma~\ref{singloc} 
below.} that the singular locus of the hypersurface defined by $g$
is contained in $X\times\{0\}$ and thus $\widetilde{\alpha}(g\vert_U)=\infty$. Conversely, if $Z$ is singular, then we may and will assume that ${\rm mult}_x(f_1)\geq 2$
for some $x\in Z$. If $p=(1,0,\ldots,0)\in\A^r$, then $(x,p)\in U$ and ${\rm mult}_{(x,p)}(g)\geq 2$, hence $\widetilde{\alpha}(g\vert_U)\leq\tfrac{n+r}{2}$ by
\cite[Theorem~E(3)]{MP1}.
\end{rmk}

Suppose now that $x\in Z$ is a fixed point. We define the \emph{minimal exponent of} $Z$ \emph{at} $x$ as follows: it is clear from the definition that if $V'\subseteq V$
are open neighborhoods of $x$, then $\widetilde{\alpha}(V', Z\cap V')\geq \widetilde{\alpha}(V, Z\cap V)$. Moreover, there is $V$ such that for all $V'$ as above,
the inequality is an equality. Indeed, otherwise we have a decreasing sequence of open neighborhoods $V_i$ of $x$ such that $\big(\widetilde{\alpha}(V_i, Z\cap V_i)\big)_i$
is a strictly increasing sequence. If $\lim_{i\to\infty}\widetilde{\alpha}(V_i, Z\cap V_i)<\infty$, then we easily get a contradiction using the discreteness of the $V$-filtration. 
On the other hand, if $\lim_{i\to\infty}\widetilde{\alpha}(V_i, Z\cap V_i)=\infty$, then it follows from Remark~\ref{rmk_singular} that $x\in Z$ is a smooth point, hence it is enough to take
$V$ to be a neighborhood of $x$ such that $V\cap Z$ is smooth.

\begin{defi}
Given a point $x\in Z$, we put
$$\widetilde{\alpha}_x(Z):=\max_{V\ni x}\widetilde{\alpha}(V, Z\cap V),$$
where the maximum is over all open neighborhoods $V$ of $x$ in $X$.
Note that this maximum exists by the previous discussion. Moreover, it follows from Remark~\ref{rmk_singular} that $\widetilde{\alpha}_x(Z)=\infty$ if and only if $x$ is a smooth point of $Z$. As before, if the ambient space is not clear from the context, we write $\widetilde{\alpha}_x(X,Z)$ instead of $\widetilde{\alpha}_x(Z)$.
\end{defi}

\begin{rmk}
It is a consequence of the definition of the minimal exponent, of the discreteness of the $V$-filtration, and of Remark~\ref{rmk_singular}, that the set
$$\big\{\widetilde{\alpha}_x(Z)\mid x\in Z\}$$
is a finite set. It is then clear that 
$$\widetilde{\alpha}(Z)=\min\big\{\widetilde{\alpha}_x(Z)\mid x\in Z\big\}.$$
Furthermore, for every $\gamma\in\Q_{>0}$, the set $\big\{x\in Z\mid \widetilde{\alpha}_x(Z)\geq \gamma\big\}$
is open in $Z$. 
\end{rmk}

Slightly more generally, if $X$ is a smooth, but possibly disconnected variety, and $Z$ is a local complete intersection closed subscheme of $X$, with both $X$ and $Z$
pure dimensional,
then we can define $\widetilde{\alpha}_x(Z)$ for every $x\in Z$ by restricting to the connected component of $X$ that contains $x$. This is useful, for instance, in the setting of
Theorem~\ref{thm3_intro}, when we do not assume that the fibers of $\mu$ are connected.

Before we prove the main properties of the minimal exponent in general, we need to handle one such property in the special case of hypersurfaces.

\begin{lem}\label{lem_thm3_intro}
The assertion in Theorem~\ref{thm3_intro}ii) holds when $r=1$.
\end{lem}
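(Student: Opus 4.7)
The assertions for $r=1$ concern hypersurface minimal exponents, and my strategy is to translate everything to the $V$-filtration characterization (\ref{eq_char_min_exp}) and then invoke the $r=1$ cases of results from \cite{MP1}. After shrinking $X$ around a given point of $Z$, I may assume $Z=V(f)$ globally for a single $f\in\cO_X(X)$ whose restriction to every fiber $X_t$ of $\mu$ is not identically zero (this is automatic since $Z_t$ has pure codimension $1$ in $X_t$).

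For part (ii$_1$), writing $\alpha=p+\gamma$ with $p\in\Z_{\geq 0}$ and $\gamma\in(0,1]$, the condition $\widetilde{\alpha}_x(X_{\mu(x)},Z_{\mu(x)})\geq\alpha$ translates, via (\ref{eq_char_min_exp}) applied to $f|_{X_{\mu(x)}}$, to the containment $\partial_t^p\delta_{f|_{X_{\mu(x)}}}\in V^{\gamma}B_{f|_{X_{\mu(x)}}}$ at $x$. The technical heart is a compatibility of the $V$-filtration with restriction to fibers of the smooth morphism $\mu$: the pullback of $V^{\bullet}B_f$ to each fiber $X_t$ satisfies Kashiwara's axioms for the $V$-filtration of $f|_{X_t}$, hence coincides with it by uniqueness. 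Since every $V^{\gamma}B_f$ is coherent over $V^0\cR$, the locus in $X$ on which $\partial_t^p\delta_f\in V^{\gamma}B_f$ holds is open, which gives (ii$_1$).

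For part (ii$_2$), I combine (ii$_1$) with the hypersurface restriction theorem (the $r=1$ case of Theorem~\ref{thm3_intro}(i)) from \cite{MP1}, which provides the inequality $\widetilde{\alpha}_x(X_t,Z_t)\leq\widetilde{\alpha}_x(X,Z)$. Since the function $x\mapsto\widetilde{\alpha}_x(X,Z)$ takes only finitely many values on $Z$ (by the discreteness of the $V$-filtration together with Remark~\ref{rmk_singular}), $Z$ stratifies into finitely many locally closed pieces on which it is constant. On each such stratum, applying (ii$_1$) and the above restriction inequality picks out an open subset of $T$ on which the fiberwise minimal exponent matches the total one; the intersection of these finitely many open sets is the desired $T_0$. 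The finiteness of $\bigl\{\widetilde{\alpha}_x(X_{\mu(x)},Z_{\mu(x)})\mid x\in Z\bigr\}$ then follows from (ii$_1$) and the same discreteness argument, and the claim for a section $s\colon T\to X$ with $s(T)\subseteq Z$ is immediate by pulling back the open locus of (ii$_1$) along $s$.

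The main obstacle is the $V$-filtration compatibility with fiber restriction used in (ii$_1$); one must carefully check that the pulled-back filtration verifies the four defining properties for the restricted function, which is where smoothness of $\mu$ enters (to guarantee coherence and to control the $t_i$ and $\partial_{t_i}$ actions). Once this is in place, everything else is a routine combination of openness, discreteness, and the $r=1$ restriction theorem from \cite{MP1}.
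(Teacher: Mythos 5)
Your proposal for part (ii$_1$) hinges on the claim that, for \emph{every} fiber $X_t$ of $\mu$, the pullback of $V^{\bullet}B_f$ to $X_t$ satisfies Kashiwara's axioms for the $V$-filtration of $f|_{X_t}$ and hence equals it by uniqueness. This is false, and the error is fatal: if it were true, the $V$-filtration characterization (\ref{eq_char_min_exp}) would yield $\widetilde{\alpha}_x(X_t,Z_t)=\widetilde{\alpha}_x(X,Z)$ for all $t$ and all $x\in Z_t$, whereas the whole point of (ii$_2$) is that this equality only holds over a dense open $T_0\subseteq T$. A concrete example: for $X=\A^2$ with coordinates $(x,t)$, $\mu=\mathrm{pr}_2$, and $f=x^2-t$, the hypersurface $Z$ is smooth so $\widetilde{\alpha}(X,Z)=\infty$, yet $\widetilde{\alpha}_0(X_0,Z_0)=\widetilde{\alpha}_0(x^2)=1/2$. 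The compatibility of the $V$-filtration with restriction to a smooth subvariety requires a non-characteristic hypothesis (this is the content of the reference \cite{DMST} used in the paper), and that hypothesis holds only over a generic open $T_0$ — it is precisely what (ii$_2$) supplies, not something automatic.

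The logical structure of your argument is also inverted compared to what actually works. You try to prove (ii$_1$) first and then deduce (ii$_2$) from it, but your (ii$_2$) step (``applying (ii$_1$) and the restriction inequality picks out an open subset of $T$ on which the fiberwise minimal exponent matches the total one'') is itself just a restatement of (ii$_2$) and does not follow from openness and discreteness alone. The paper's proof goes the other way: it establishes (ii$_2$) independently — by the generic restriction theorem for multiplier ideals and its Hodge-ideal analogue \cite[Theorem~13.1]{MP3}, or alternatively by \cite{DMST} — and then proves the openness of $W_\alpha=\{x\in Z\mid\widetilde{\alpha}_x(X_{\mu(x)},Z_{\mu(x)})\geq\alpha\}$ by induction on $\dim T$: one shows $W_\alpha$ is constructible (open over a generic $T_0$ by (ii$_2$), and constructible over $T\smallsetminus T_0$ by induction, after passing to a resolution of $T\smallsetminus T_0$), and then upgrades constructibility to openness via a specialization argument along irreducible curves through boundary points, using the restriction inequality \cite[Theorem~E(1)]{MP1}. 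You should replace the erroneous uniqueness argument with this two-step strategy.
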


We note that in the presence of a section $s\colon T\to X$, the openness assertion in the lemma is \cite[Theorem~E(2)]{MP1}. However, for our purpose it will be important 
to have the stronger openness assertion, that does not make reference to a section, since this is the one that will allow us to handle arbitrary codimension. The proof follows
closely the approach in \cite{MP1} (which in turn was modeled on the approach to prove the semicontinuity of log canonical thresholds via multiplier ideals, see
\cite[Example~9.5.41]{Lazarsfeld}). However, we include a detailed proof for the benefit of the reader.

The proof of assertion ${\rm ii_1)}$
makes use of the notion of \emph{Hodge ideals} for $\Q$-divisors, introduced and studied in \cite{MP3}. For every hypersurface $Z$ in a smooth variety $X$,
every nonnegative integer $p$, and every positive rational number $\alpha$, the corresponding Hodge ideal is denoted by $I_p(\alpha Z)$. For $p=0$, this is just the multiplier ideal
$\cJ\big((\alpha-\epsilon)Z\big)$, where $0 < \epsilon\ll 1$, see \cite[Proposition~9.1]{MP3} (since $Z$ is a hypersurface, we follow the traditional notation to write $\cJ(\lambda Z)$ for what we denoted before by $\cJ(\fra^{\lambda})$, where $\fra$ is the ideal defining $Z$). Moreover, it was shown in \cite{MP3} that many basic properties of multiplier ideals admit extensions to Hodge ideals. 

Recall that by definition of the log canonical threshold, we have
$\lct(X,Z)\geq \alpha$ if and only if $\cJ\big((\alpha-\epsilon)Z\big)=\cO_X$ for $0<\epsilon\ll 1$. Similarly, it was shown in \cite[Corollary~C]{MP1} that if $Z$ is reduced, $p$ is a nonnegative integer, and $\alpha\in\Q\cap (0,1]$, then $\widetilde{\alpha}(Z)\geq p+\alpha$ if and only if $I_p(\alpha Z)=\cO_X$. On the other hand, if $Z$ is not reduced, 
then we automatically have $\lct(X,Z)<1$, hence $\widetilde{\alpha}(Z)=\lct(X,Z)$ can be characterized using multiplier ideals.

\begin{proof}[Proof of Lemma~\ref{lem_thm3_intro}]
We first note that for every $t\in T$, the fiber $X_t$ is a smooth subvariety of the smooth variety $X$ that contains no component of the hypersurface $Z$. 
Locally around any $x\in X_t$, we can write $X_t$ as a transverse intersection of $\dim(T)$ smooth hypersurfaces in $X$, so that successively applying \cite[Theorem~E(1)]{MP1} to restrict to each
of these smooth hypersurfaces, we obtain
\begin{equation}\label{eq1_lem_thm3_intro}
\widetilde{\alpha}_x(X_t, Z_t)\leq \widetilde{\alpha}_x(X, Z).
\end{equation}
We next show that there is a nonempty open subset $T_0$ of $T$ such that for every $t\in T_0$ and every $x\in X_t$, the inequality in (\ref{eq1_lem_thm3_intro})
is an equality, thus proving the assertion in ${\rm ii_2)}$ in our setting. One way to see this is by using the characterization of the minimal exponent in terms of Hodge ideals and multiplier ideals and the fact that there is an open subset $T_0$ such that
$$\cJ(\lambda Z_t)=\cJ(\lambda Z)\cdot\cO_{Z_t}\quad\text{for all}\quad \lambda>0,\,\,t\in T_0$$
(see \cite[Theorem~9.5.35]{Lazarsfeld}) and, assuming that $Z$ is reduced and thus $Z_t$ is also reduced for general $t\in T$,
a similar formula holds for Hodge ideals
$$I_p(\lambda Z_t)=I_p(\lambda Z)\cdot\cO_{Z_t}\quad\text{for all}\quad p\in\Z_{\geq 0},\,\,\lambda\in\Q_{>0},\,\,t\in T_0$$
(see the last assertion in \cite[Theorem~13.1]{MP3}). Alternatively, one can use the characterization of the minimal exponent in terms of the $V$-filtration in
(\ref{eq_MLCT}) and the results concerning the behavior of the $V$-filtration with respect to non-characteristic restriction in \cite{DMST}.

We next prove the assertion in ${\rm ii_1)}$. For every $\alpha$, let
$$W_{\alpha}=\big\{x\in Z\mid \widetilde{\alpha}_x(X_{\mu(x)}, Z_{\mu(x)})\geq \alpha\big\}.$$

We first note that the assertion in ${\rm ii_1)}$ makes sense also when $T$ is not assumed to be a smooth variety, but just a
(reduced, but not necessarily irreducible) algebraic variety. However, in order to have the statement for such varieties of dimension $n$, it is enough to prove it
for smooth, irreducible, $n$-dimensional varieties. Indeed, using resolution of singularities, we can find a proper surjective morphism $g\colon T' \to T$,
with $T'$ $n$-dimensional and smooth (but possibly disconnected). Consider the Cartesian diagram
$$\xymatrix{
 X' \ar[d]_{f}\ar[r]^h & X\ar[d]^{\mu}\\
T' \ar[r]^g& T
 }$$ 
 and let $Z'=h^*(Z)$.
The assertion follows by noting that
$$W'_{\alpha}:=\big\{x\in Z'\mid\widetilde{\alpha}_x(X'_{f(x)}, Z'_{f(x)})\geq\alpha\big\}=h^{-1}(W_{\alpha}),$$
and thus $Z\smallsetminus W_{\alpha}=h(Z'\smallsetminus W'_{\alpha})$ is closed in $Z$ if $W'_{\alpha}$ is open in $Z'$. 

We now prove that $W_{\alpha}$ is open in $Z$ by induction on $\dim(T)$. The assertion is clear if $\dim(T)=0$, hence we may and will assume that $\dim(T)\geq 1$. 
We first show that the subset $W_{\alpha}\subseteq Z$ is constructible. Indeed, note first that if $T_0\subseteq T$ is a nonempty open subset that 
satisfies condition ${\rm ii_2)}$, then 
$$W_{\alpha}\cap \mu^{-1}(T_0)=\big\{x\in \mu^{-1}(T_0)\cap Z\mid \widetilde{\alpha}_x(X, Z)\geq\alpha\big\}$$
is open in $Z\cap\mu^{-1}(T_0)$. On the other hand, we can apply the induction hypothesis to the morphism $\mu^{-1}(T\smallsetminus T_0)\to T\smallsetminus T_0$
and the hypersurface $Z\cap \mu^{-1}(T\smallsetminus T_0)$
to conclude that $W_{\alpha}\cap \mu^{-1}(T\smallsetminus T_0)$ is open in $\mu^{-1}(T\smallsetminus T_0)$
(as we have discussed, the fact that $T\smallsetminus T_0$ might not be smooth is not an issue). Therefore $W_{\alpha}$ is constructible.

Since $W_{\alpha}$ is constructible, in order to prove that it is open in $Z$, it is enough to show that if $W\subseteq Z$ is an
irreducible locally closed subvariety of $Z$, of positive dimension, and $x\in W$ is such that $W\smallsetminus\{x\}\subseteq Z\smallsetminus W_{\alpha}$, then 
$x\not\in W_{\alpha}$. Of course, we may assume that $W$ dominates $T$, since otherwise we are done by induction.
Arguing by contradiction, let us assume that $x\in W_{\alpha}$.
In this case it follows from (\ref{eq1_lem_thm3_intro}) that $\widetilde{\alpha}_x(X, Z)\geq\alpha$ and thus there is an open neighborhood $V$ of $x$ in $Z$
such that $\widetilde{\alpha}_y(X, Z)\geq\alpha$ for all $y\in V$. On the other hand, if $T_0\subseteq T$ is a nonempty open subset that satisfies property ${\rm ii_2)}$,
then $W\cap\mu^{-1}(T_0)\cap V$ contains some $y\neq x$. In this case we have
$$\widetilde{\alpha}_y(X_{\mu(y)}, Z_{\mu(y)})=\widetilde{\alpha}_y(X, Z)\geq \alpha,$$
hence $y\in W_{\alpha}$, a contradiction. This completes the proof of ${\rm ii_1)}$.

The finiteness of the set 
$$\big\{\widetilde{\alpha}_x(X_{\mu(x)}, Z_{\mu(x)})\mid x\in Z\big\}$$
now follows easily by induction on $\dim(T)$, using the fact that if $T_0$ satisfies the condition in ${\rm ii_2)}$, then 
$$\big\{\widetilde{\alpha}_x(X_{\mu(x)}, Z_{\mu(x)})\mid x\in\mu^{-1}(T_0)\cap Z\big\}\subseteq \big\{\widetilde{\alpha}_x(X, Z)\mid x\in \mu^{-1}(T_0)\cap Z\big\}$$
and the right-hand side is clearly finite. Finally, if $s\colon T\to X$ is a section of $\pi$ such that $s(T)\subseteq Z$, then
$$\big\{t\in T\mid\widetilde{\alpha}_{s(t)}(X_t, Z_t)\geq\alpha\big\}=s^{-1}\left(\big\{x\in Z\mid\widetilde{\alpha}_x(X_{\mu(x)}, Z_{\mu(x)})\geq\alpha\big\}\right),$$
and thus it is open in $T$. This completes the proof of the lemma.
\end{proof}

We can now prove the properties of the minimal exponent in arbitrary codimension.

\begin{proof}[Proof of Theorem~\ref{thm3_intro}]
Since all assertions are local with respect to $X$, we may and will assume that $Z$ is defined by the ideal generated by $f_1,\ldots,f_r\in\cO_X(X)$.
Let $g=\sum_{i=1}^rf_iy_i\in \cO_X(X)[y_1,\ldots,y_r]$ and let $Z'$ be the hypersurface defined by $g$ in $U=X\times \big(\A^r\smallsetminus\{0\}\big)$.
Note that $Z\times \big(\A^r\smallsetminus\{0\}\big)\subseteq Z'$.
The plan is to use Theorem~\ref{thm2_intro} to reduce to the case of hypersurfaces.
 The only subtlety is that while the results 
concern local minimal exponents, the description provided by Theorem~\ref{thm2_intro} is not of a local nature. However, we will go around this issue using the 
homogeneity 
of $g$ in $y_1,\ldots,y_r$. More precisely, we have the following

\begin{claim}\label{claim_thm3_intro}
If $X_0\subseteq Z$ is a subset such that $\widetilde{\alpha}_{(x,\lambda)}(g)\geq \gamma$ for all $(x,\lambda)\in X_0\times \big(\A^r\smallsetminus\{0\}\big)$, 
then after replacing $X$ by an open neighborhood of $X_0$, we may assume that 
$\widetilde{\alpha}(g\vert_U)\geq\gamma$. 
\end{claim}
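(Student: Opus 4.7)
The plan is to exploit the homogeneity of $g=\sum_{i=1}^rf_iy_i$ in the variables $y_1,\ldots,y_r$, together with the openness assertion from Lemma~\ref{lem_thm3_intro} (the $r=1$ case, already established), in order to descend a pointwise minimal exponent bound over $X_0\times(\A^r\smallsetminus\{0\})$ to a uniform bound over $V\times(\A^r\smallsetminus\{0\})$ for some open neighborhood $V$ of $X_0$ in $X$.

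First, I would consider the ``bad locus''
\[
S:=\bigl\{(x,\lambda)\in U\mid \widetilde{\alpha}_{(x,\lambda)}(g)<\gamma\bigr\}.
\]
Applying Lemma~\ref{lem_thm3_intro} to the hypersurface $\{g=0\}\subseteq U$ and the constant morphism $U\to\mathrm{pt}$, the locus $\{(x,\lambda)\in\{g=0\}\mid\widetilde{\alpha}_{(x,\lambda)}(g)\ge\gamma\}$ is open in $\{g=0\}$; since $\widetilde{\alpha}_{(x,\lambda)}(g)=\infty$ off this hypersurface, $S$ is closed in $U$. Next, I would observe that the $\C^*$-action $\mu\cdot(x,\lambda)=(x,\mu\lambda)$ on $U$ sends $g$ to $\mu g$, so it preserves the hypersurface $\{g=0\}$ together with all its local analytic invariants; in particular $S$ is $\C^*$-invariant, hence $S=\pi^{-1}(A)$ for a uniquely determined subset $A\subseteq X\times\P^{r-1}$, where $\pi\colon U\to X\times\P^{r-1}$ is the geometric quotient by the $\C^*$-action (a smooth $\C^*$-bundle, in particular an open surjection). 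Since $\pi$ is an open map and $S$ is closed and $\pi$-saturated, $A$ is closed in $X\times\P^{r-1}$.

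Finally, the second-factor projection $q\colon X\times\P^{r-1}\to X$ is proper, so $q(A)$ is closed in $X$. The hypothesis of the claim is precisely that $X_0\cap q(A)=\emptyset$, so $V:=X\smallsetminus q(A)$ is an open neighborhood of $X_0$ in $X$, and by construction $V\times(\A^r\smallsetminus\{0\})$ is disjoint from $S$. Replacing $X$ by $V$ (and $U$ by $V\times(\A^r\smallsetminus\{0\})$) then yields $\widetilde{\alpha}_{(x,\lambda)}(g)\ge\gamma$ at every point of $U$, and consequently $\widetilde{\alpha}(g\vert_U)\ge\gamma$.

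The main obstacle is that the projection $p\colon U\to X$ is \emph{not} proper, so one cannot naively push the closed bad locus $S\subseteq U$ forward to a closed subset of $X$; this is exactly where the $\C^*$-symmetry provided by the homogeneity of $g$ is indispensable, because it allows one to factor the projection through the projective fibration $X\times\P^{r-1}\to X$, converting the non-proper descent problem on $U$ into a proper one on $X\times\P^{r-1}$. The remaining ingredients---the semicontinuity furnished by Lemma~\ref{lem_thm3_intro} and the properness of $\P^{r-1}$---are then standard.
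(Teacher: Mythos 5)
Your proof is correct and follows essentially the same route as the paper: both use the homogeneity of $g$ to see that the bad locus is $\C^*$-invariant and hence descends to a closed subset of $X\times\P^{r-1}$, then project along the proper map $X\times\P^{r-1}\to X$ to get a closed subset $F\subseteq X$ and replace $X$ by $X\smallsetminus F$. The extra explicitness you add (open quotient map, saturation, properness of $q$) only spells out details that the paper leaves implicit.
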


Indeed, consider the canonical projection
$\pi\colon U\to X\times {\mathbf P}^{r-1}$. Since $g$ is homogeneous with respect to $y_1,\ldots,y_r$, it follows that the set
$$W:=\big\{(x,\lambda)\in Z'\mid \widetilde{\alpha}_{(x,\lambda)}(g)<\gamma\}$$
is equal to $\pi^{-1}(W')$, for some subset $W'\subseteq Z\times \P^{r-1}$. 
Since $W$ is closed in $U$, we see that $W'$ is closed in $Z\times\P^{r-1}$.
By assumption, $W'\cap (X_0\times\P^{r-1})=\emptyset$. 
It follows that if $F\subseteq X$ is the projection of $W'$, then after replacing $X$ by 
$X\smallsetminus F$, which is an open neighborhood of $X_0$, we have $\widetilde{\alpha}(g\vert_U)\geq\gamma$. This proves the above claim.

Let's begin with the proof of i). Note that the hypothesis implies that $Z_H$ is a complete intersection in $H$, of pure codimension $r$, defined by the ideal generated by 
$f_1\vert_H,\ldots,f_r\vert_H$. 
Let $U_H=H\times \big(\A^r\smallsetminus\{0\}\big)$. 
If $\gamma=\widetilde{\alpha}_x(H, Z_H)$, then after replacing $X$ by a suitable open neighborhood of $x$, we may assume that
$\widetilde{\alpha}(H, Z_H)=\gamma$, hence $\widetilde{\alpha}(g\vert_{U_H})=\gamma$ by Theorem~\ref{thm2_intro}. In this case, it follows from 
\cite[Theorem~E(1)]{MP1} that $\widetilde{\alpha}_{(z,\lambda)}(g)\geq\gamma$ for every $z\in Z_H$ and every $\lambda\in\A^r\smallsetminus\{0\}$. 
We deduce using Claim~\ref{claim_thm3_intro} that after possibly replacing $X$ by a neighborhood of $Z_H$, we have
$\widetilde{\alpha}(g\vert_U)\geq\lambda$. Another application of Theorem~\ref{thm2_intro} gives $\widetilde{\alpha}(X, Z)\geq \gamma$, which completes the proof of i). 

We next prove ii). Arguing as in the proof of Lemma~\ref{lem_thm3_intro}, it is straightforward to see that if ${\rm ii_1)}$ and ${\rm ii_2)}$ hold,
then the other two assertions hold as well. Let us prove first ${\rm ii_1)}$. We need to show that for every $x\in Z$,
there is an open neighborhood $U_x$ of $x$ such that
\begin{equation}\label{eq_lower_bound}
\widetilde{\alpha}_z(X_{\mu(z)}, Z_{\mu(z)})\geq\alpha:=\widetilde{\alpha}_x(X_{\mu(x)}, Z_{\mu(x)})\quad\text{for every}\quad z\in U_x\cap Z.
\end{equation}

Let $\varphi$ be the composition $U=X\times \big(\A^r\smallsetminus\{0\}\big)\to X\overset{\mu}\longrightarrow T$. For every $t\in T$, we denote by $g_t$
the restriction of $g$ to $X_t\times \A^r$. 
After possibly replacing $X$ by an open neighborhood of $x$, we may and will assume that $\widetilde{\alpha}(X_{\mu(x)}, Z_{\mu(x)})\geq \alpha$. 
By Theorem~\ref{thm2_intro}, we have
\begin{equation}\label{eq11_thm3_intro}
\widetilde{\alpha}(g_{\mu(x)}\vert_{U_{\mu(x)}})\geq\alpha.
\end{equation}
Applying Lemma~\ref{lem_thm3_intro} for the smooth morphism
$\varphi$ and the hypersurface defined by $g$ in $U$, we see that the set
$$V_{\alpha}:=\big\{(z,\lambda)\in Z\times (\A^r\smallsetminus\{0\})\mid \widetilde{\alpha}_{(z,\lambda)}(g_{\mu(z)}\vert_{U_{\mu(z)}})\geq \alpha\big\}$$
is open in $Z\times \big(\A^r\smallsetminus\{0\}\big)$. Note that by (\ref{eq11_thm3_intro}), we have 
\begin{equation}\label{eq12_thm3_intro}
Z_{\mu(x)}\times\big(\A^r\smallsetminus\{0\}\big)\subseteq V_{\alpha}.
\end{equation}
Arguing as in the proof of Claim~\ref{claim_thm3_intro}, we see that after possibly replacing $X$ by an open neighborhood of $Z_{\mu(x)}$, we may
assume that $\widetilde{\alpha}(g_t)\geq\alpha$ for all $t\in T$ (indeed, $V_{\alpha}$ is the inverse image of an open subset  $W\subseteq Z\times\P^{r-1}$ 
and we may take the open neighborhood of $Z_{\mu(x)}$ to be the complement in $X$ of the projection of $(X\times\P^{r-1})\smallsetminus W$
onto the first component). In this case, Theorem~\ref{thm2_intro} gives $\widetilde{\alpha}(X_t, Z_t)\geq\alpha$ for all $t\in T$. 
Thus ${\rm ii_1)}$ holds. 
 
 Keeping the same notation, we now prove ${\rm ii_2)}$. Applying Lemma~\ref{lem_thm3_intro} for $\varphi$ and the hypersurface $Z'$ defined by $g$, 
 we see that there is an open subset $T_0$ of $T$ such that for every $t\in T_0$ and every $x\in Z'_t$, we have
 \begin{equation}\label{eq13_thm3_intro}
 \widetilde{\alpha}_{(x,\lambda)}(g_t)=\widetilde{\alpha}_{(x,\lambda)}(g)\quad\text{for all}\quad \lambda\in \A^r\smallsetminus \{0\}. 
 \end{equation}
 It is enough to show that for every $t\in T_0$ and every $x\in Z_t$, we have
 \begin{equation}\label{eq14_thm3_intro}
 \widetilde{\alpha}_x(X_t, Z_t)=\widetilde{\alpha}_x(X, Z).
 \end{equation}
 We fix such $t$ and $x$ and note that we may replace $X$ by any open neighborhood of $x$.
 The inequality ``$\leq$" in (\ref{eq14_thm3_intro}) always holds
  by part i) of the theorem, so we only need to prove that $\widetilde{\alpha}_x(X_t, Z_t)\geq \alpha_0:=\widetilde{\alpha}_x(X, Z)$.
  After possibly replacing $X$ by a suitable neighborhood 
 of $x$, we may and will assume that $\widetilde{\alpha}(X, Z)=\alpha_0$, hence Theorem~\ref{thm2_intro} gives $\widetilde{\alpha}(g\vert_U)=\alpha_0$. 
 By (\ref{eq13_thm3_intro}), we thus have $\widetilde{\alpha}_{(x',\lambda)}(g_{t})\geq\alpha_0$  for every $x'\in Z_t$, and 
 $\lambda\in\A^r\smallsetminus\{0\}$, and another application of Theorem~\ref{thm2_intro} gives $\widetilde{\alpha}_x(X_t, Z_t)\geq 
 \widetilde{\alpha}(X_t, Z_t)\geq\alpha_0$. This completes the proof of ${\rm ii_2)}$.

Let us prove the inequality in iii). For $\lambda=(\lambda_1,\ldots,\lambda_r)\in\A^r\smallsetminus \{0\}$, we put $g_{\lambda}=\sum_{i=1}^r\lambda_if_i$.
It follows from the assertion in ${\rm ii_2)}$ that if $\lambda$ is general, then $\widetilde{\alpha}(g_{\lambda})\geq\widetilde{\alpha}(g\vert_U)=\widetilde{\alpha}(Z)$,
where the equality follows from Theorem~\ref{thm2_intro}. Since ${\rm mult}_x(g_{\lambda})\geq k$, it follows from \cite[Theorem~E(3)]{MP1} that
$\widetilde{\alpha}_x(g_{\lambda})\leq\tfrac{n}{k}$, and thus $\widetilde{\alpha}(Z)\leq\tfrac{n}{k}$. Since the same argument applies to any open neighborhood of $x$,
we get $\widetilde{\alpha}_x(Z)\leq\tfrac{n}{k}$.
\end{proof}

\begin{rmk}
Note that the assertion in Theorem~\ref{thm3_intro}${\rm ii_1)}$ makes sense when $T$ is any (reduced, but not necessarily irreducible) variety. Moreover, the assertion in the general case can be easily reduced to the case when $T$ is smooth using resolution of singularities, as explained in the proof of Lemma~\ref{lem_thm3_intro}. 
Furthermore, 
the same argument implies that in this case, too, the set $\big\{\widetilde{\alpha}_x(X_{\mu(x)}, Z_{\mu(x)})\mid x\in Z\big\}$ is finite. 
\end{rmk}

\begin{rmk}\label{rem_sharper}
We can now see that if $X$ is a smooth, irreducible variety and $Z$ is a local complete intersection closed subscheme of $X$, of pure dimension, then for every 
singular point $x\in Z$,
we have
$$\widetilde{\alpha}_x(X,Z)\leq \dim(X)-\frac{1}{2}{\rm embdim}_x(Z),$$
where ${\rm embdim}_x(Z)=\dim_{\C}T_xZ$. Indeed, note first that if $d={\rm embdim}_x(Z)$, then after possibly replacing $X$ by an open neighborhood of 
$x$, we have a closed embedding $Z\hookrightarrow X'$, where $X'$ is smooth, irreducible, of dimension $d$. Since $x\in Z$ is a singular point, 
the ideal defining $Z$ in $X'$ is contained in $\frm_x^2$, where $\frm_x$ is the ideal defining $x$, hence Theorem~\ref{thm3_intro}iii) gives
$\widetilde{\alpha}_x(X',Z)\leq\tfrac{d}{2}$. In this case Proposition~\ref{dep_on_X} implies that 
$$\widetilde{\alpha}_x(X,Z)=\dim(X)-d+\widetilde{\alpha}_x(X',Z)\leq \dim(X)-\frac{d}{2}.$$
\end{rmk}

Our next goal is to give one nontrivial computation of minimal exponent when $r>1$. Before doing this, we give an easy lemma describing the singular locus 
of the hypersurface that we associate to a complete inersection subscheme. 
We assume that we have global coordinates $x_1,\ldots,x_n$ on the smooth variety $X$ (that is,
$dx_1,\ldots,dx_n$ trivialize $\Omega_X$) and let $\partial_{x_1},\ldots,\partial_{x_n}$ be the corresponding derivations. As usual, we suppose that we have
$f_1,\ldots,f_r\in\cO_X(X)$ that define a closed subscheme $Z$ of $X$, of pure codimension $r$, and consider $Y=X\times\A^r$, $U=X\times \big(\A^r\smallsetminus\{0\}\big)$, and
$g=\sum_{i=1}^rf_iy_i\in\cO_Y(Y)$.
 We denote by $J_f^t$ the transpose matrix of the Jacobian matrix
$\big(\partial_{x_j}(f_i)\big)_{i,j}$. 

\begin{lem}\label{singloc} 
With the above notation, the singular locus of the hypersurface $V(g)$ defined by $g$ in $Y$ is 
\[V(g)_{\rm sing}=\bigsqcup_{x\in Z}\{x\}\times W_x,\]
where $W_x={\rm Ker}\,J_f^t(x)$ is a linear subspace of $\A^r$ of dimension $\dim_{\C}(T_xZ)-\dim(Z)$, and thus
\[V(g\vert_U)_{\rm sing}=\bigsqcup_{x\in Z_{\rm sing}}\{x\}\times \big(W_x\smallsetminus\{0\}\big).\]
\end{lem}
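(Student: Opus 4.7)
The plan is to compute the partial derivatives of $g$ directly and identify when they all vanish, then extract the dimension statement using that $Z$ has pure codimension $r$.

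First, I would write out the defining equations of $V(g)_{\rm sing}$ inside $Y = X \times \A^r$. Since
\[
\partial_{y_i}(g) = f_i \quad \text{and} \quad \partial_{x_j}(g) = \sum_{i=1}^r \partial_{x_j}(f_i)\, y_i,
\]
a point $(x,y)$ lies in $V(g)_{\rm sing}$ exactly when $f_i(x) = 0$ for all $i$ (which forces $x \in Z$ and automatically makes $g(x,y)=0$) and the vector with $j$-th entry $\sum_i \partial_{x_j}(f_i)(x)\, y_i$ is zero. The latter condition is precisely $J_f^t(x) \cdot y = 0$, i.e., $y \in W_x := \ker J_f^t(x) \subseteq \A^r$. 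This gives the set-theoretic description
\[
V(g)_{\rm sing} = \bigsqcup_{x \in Z}\{x\}\times W_x.
\]

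Next, I would compute $\dim W_x$. By rank-nullity, $\dim W_x = r - \operatorname{rank} J_f^t(x) = r - \operatorname{rank} J_f(x)$. Since $Z$ has pure codimension $r$ in $X$, the tangent space $T_xZ$ is the kernel of $J_f(x) \colon T_xX \to \C^r$, so $\dim T_xZ = n - \operatorname{rank} J_f(x)$. Combining,
\[
\dim W_x = r - \bigl(n - \dim T_xZ\bigr) = \dim T_xZ - (n-r) = \dim T_xZ - \dim Z,
\]
which is the claimed formula.

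For the second assertion, $V(g\vert_U)_{\rm sing} = V(g)_{\rm sing} \cap U$ just removes the locus $y=0$. If $x \in Z$ is a smooth point, then $\dim T_xZ = \dim Z$, so $W_x = \{0\}$ and the entire fiber $\{x\}\times W_x$ is discarded; if $x \in Z_{\rm sing}$, then $\dim T_xZ > \dim Z$, so $\dim W_x \geq 1$ and we are left with $\{x\}\times (W_x \smallsetminus \{0\})$. This yields the stated disjoint union. There is no genuine obstacle here; the only care required is getting the identification between $\partial_{x_j}(g)$ and the $j$-th row of $J_f^t(x)\cdot y$ right, and invoking the pure-codimension hypothesis to translate $\operatorname{rank} J_f(x)$ into tangent-space data.
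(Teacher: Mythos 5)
Your proof is correct and follows essentially the same route as the paper: compute $\partial_{y_i}(g)$ and $\partial_{x_j}(g)$, observe these equations force $x\in Z$ and $y\in\ker J_f^t(x)$, and then use rank-nullity together with $T_xZ=\ker J_f(x)$ and the pure-codimension hypothesis to get the dimension formula. The only cosmetic difference is that the paper explains $g=0$ is automatic via the homogeneity of $g$ in $y$, whereas you note it follows directly from $f_i(x)=0$; these are the same observation.
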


\begin{proof} The singular locus $V(g)_{\rm sing}$ is defined by the equations 
$$\de_{y_1}(g)=\ldots=\de_{y_r}(g)=\de_{x_1}(g)=\dots=\de_{x_n}(g)=0$$
(note that these imply $g=0$ since $g$ is homogeneous of degree 1 in $y_1,\ldots,y_r$).
The formula for $V(g)_{\rm sing}$ follows from the fact that 
\[ \de_{y_i}(g) = f_i\quad\text{for}\quad 1\leq i\leq r\quad\text{and}\quad  \de_{x_j}(g) = \sum_{i=1}^r y_i \de_{x_j}(f_i)\quad\text{for}\quad 1\leq j\leq n.\]
We deduce the formula for $\dim(W_x)$  from the fact that the rank of $J_f$ at $x\in Z$ is $n-\dim_{\C}(T_xZ)$ and $J_f$ and $J_f^t$ have the same rank.
In particular, we see that $W_x\neq\{0\}$ if and only if $x\in Z_{\rm sing}$, and we obtain
 the description of $V(g\vert_U)_{\rm sing}$.
\end{proof}

\begin{eg} Let $f_1,\dots, f_r \in \C[x_1,\dots, x_n]$ be homogeneous polynomials of degree $d\geq 2$ that define a smooth, irreducible variety of codimension $r$ in ${\mathbf P}^{n-1}$. Therefore the subvariety $Z = V(f_1,\dots, f_r) \subseteq \A^n$ is a complete intersection, with a unique singular point at $0$. We will show that
\[ \widetilde{\alpha}(Z) = \frac{n}{d},\]
generalizing the well-known formula for $r=1$. 

Let $g=\sum_{i=1}^rf_iy_i$ and $U=\A^n\times \big(\A^r\smallsetminus\{0\}\big)$. 
We denote by $B_U$, respectively $\widetilde{B}_U$, the $\cD$-modules on which we have the $V$-filtration (respectively, the microlocal $V$-filtration) associated to $g\vert_U$
and we simply write $\delta_U$ for $\delta_{g\vert_U}$.
Note that it follows from Lemma~\ref{singloc} and our assumption on $Z$ that the singular locus 
of $V(g\vert_U)$ is equal to $\{0\}\times \big(\A^r\smallsetminus\{0\}\big)$. Recall that by (\ref{eq_MLCT}), if $\lambda$ is such that $\delta_U\in V^{\lambda}\widetilde{B}_U$ and
its class $\overline{\delta_U}\in{\rm Gr}_V^\lambda(\widetilde{B}_U)$ is nonzero, then $\lambda=\widetilde{\alpha}(g\vert_U)$. We will show that $\lambda=\tfrac{n}{d}$. 

Recall that for every $\alpha\in [0,1)\cap\Q$, the filtered $\cD_U$-module $\big({\rm Gr}_V^{\alpha}(B_U),F\big)$ is a filtered direct summand of a mixed Hodge module. In particular,
it is regular and quasi-unipotent along every hypersurface in the sense of \cite[Section~3.2.1]{Saito-MHP}. Moreover, its support is contained in the singular locus of $V(g\vert_U)$.
On the other hand, we have an isomorphism of filtered $\cD_U$-modules
$$\big({\rm Gr}_V^\alpha(\widetilde{B}_U),F\big) \simeq \big({\rm Gr}_V^\alpha(B_U), F\big)$$
by \cite[(2.1.4)]{Saito_microlocal}. Finally, if we write $\lambda=k+\alpha$, where $k\in\Z$ and $\alpha\in [0,1)$, it follows from \cite[(2.2.3)]{Saito_microlocal} that 
we have a filtered isomorphism 
\[ \partial_t^k: \big({\rm Gr}_V^\lambda(\widetilde{B}_U), F\big) \to \big({\rm Gr}_V^{\alpha}(\widetilde{B}_U), F[k]\big),\]
where $F[k]$ is the shifted filtration $F[k]_p=F_{p+k}$. We thus conclude that $\big({\rm Gr}^{\lambda}_V(\widetilde{B}_U),F\big)$ is regular and quasi-unipotent along every hypersurface;
moreover, its support is contained in the subset defined by $(x_1,\ldots,x_n)$. 

Note that by definition of the Hodge filtration on $\widetilde{B}_U$, we have $\delta_U\in F_0\widetilde{B}_U$ and 
$F_{-1}\widetilde{B}_U=\oplus_{i\leq -1}\cO_U\partial_t^i\delta_U$. Since $\delta_U\in V^{\lambda}\widetilde{B}_U$, it follows that
$F_{-1}\widetilde{B}_U\subseteq \sum_{i\leq -1}\partial_t^{i}\widetilde{B}_U\subseteq V^{\lambda+1}\widetilde{B}_U$. We thus see that
$F_{-1}{\rm Gr}_V^{\lambda}(\widetilde{B}_U)=0$. 

Since $\big({\rm Gr}^{\lambda}_V(\widetilde{B}_U), F\big)$ is regular and quasi-unipotent along every hypersurface, 
with support contained in the zero-locus of $x_1,\ldots,x_n$, it follows from \cite[Lemme~3.2.6]{Saito-MHP}
that $x_1,\ldots,x_n$ annihilate the first nonzero piece of the Hodge filtration 
on ${\rm Gr}^{\lambda}_V(\widetilde{B}_U)$. In particular, if $\theta_x=\sum_{i=1}^nx_i\partial_{x_i}$, so that
$\theta_x+n=\sum_{i=1}^n\partial_{x_i}x_i$, we see that $(\theta_x+n)\overline{\delta_U}=0$. 

Note that by (\ref{eq_action_B_f}), we have
$$\theta_x\delta_U=-\sum_{i=1}^nx_i\partial_{x_i}(g)\partial_t\delta_U=-dg\partial_t\delta_U,$$
where the second equality follows from the fact that $g$ is homogeneous of degree $d$ with respect to $x_1,\ldots,x_n$. 
On the other hand, using again (\ref{eq_action_B_f}), we have
$$s\delta_U=-\partial_tt\delta_U=-g\partial_t\delta_U.$$
We thus conclude that 
$$(\theta_x+n)\overline{\delta_U}=(n+sd)\overline{\delta_U}=0.$$
Since $s+\lambda$ is nilpotent on ${\rm Gr}^{\lambda}_V(\widetilde{B}_U)$, this implies $\lambda=\tfrac{n}{d}$.
\end{eg}

\section{The minimal exponent and the Hodge filtration on local cohomology}\label{secn_local_cohom}

In this section we give the proofs of Theorems~\ref{thm1_intro} and ~\ref{thm1.5_intro} by making use of a key result from~\cite{CD}, saying
that, under the assumptions of Theorem~\ref{thm1.5_intro}, we have an isomorphism 
of filtered $\cD_X$-modules
\begin{equation}\label{isom_CD}
\sigma\colon V^r(B_{\mathbf f})/(t_1,\ldots,t_r)V^{r-1}(B_{\mathbf f})\overset{\sim}\longrightarrow\cH^r_Z(\cO_X).
\end{equation}
For us, it is important to have an explicit description of this isomorphism and this is not easy to obtain
from the proof in \emph{loc. cit}. Because of this, we 
proceed in a roundabout way:
we first construct a nonzero  morphism of $\cD_X$-modules as in (\ref{isom_CD}) and then use the following 
lemma that describes the endomorphisms of $\cH^r_Z(\cO_X)$. 

\begin{lem}\label{lem_endom}
Let $X$ be a smooth, irreducible complex algebraic variety. If $Z$ is a connected, local complete intersection subscheme of $X$, of pure codimension $r$,
then the canonical map ${\mathbf C}\to {\rm End}_{\cD_X}\big(\cH^r_Z(\cO_X)\big)$ is bijective.
\end{lem}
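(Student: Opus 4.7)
The injectivity of $\C\to\End_{\cD_X}(\cH^r_Z(\cO_X))$ is immediate from $\cH^r_Z(\cO_X)\neq 0$, so the content is surjectivity. My plan is to take an arbitrary $\cD_X$-linear endomorphism $\phi$ and show it must act as a single scalar, by analyzing its restriction to the order filtration $O_\bullet\cH^r_Z(\cO_X)$, where $O_k=\{u\in\cH^r_Z(\cO_X)\mid I_Z^{k+1}u=0\}$.

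Being $\cO_X$-linear, $\phi$ preserves this filtration and in particular induces an $\cO_Z$-linear endomorphism of $O_0\cH^r_Z(\cO_X)$. Standard local duality for a Cohen--Macaulay subscheme of pure codimension $r$ identifies this with $\cExt^r_{\cO_X}(\cO_Z,\cO_X)$, and the Gorenstein property of a local complete intersection further identifies it with the dualizing line bundle $\omega_{Z/X}$ on $Z$. Hence $\phi|_{O_0}$ is multiplication by a regular function $c\in\Gamma(Z,\cO_Z)$. To show $c$ is a single scalar $\lambda\in\C$, I would first replace $Z$ by its reduction (which does not change $\cH^r_Z(\cO_X)$ as a $\cD_X$-module), so that the smooth locus $Z_{\mathrm{sm}}$ is dense in $Z$. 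Near any point of $Z_{\mathrm{sm}}$ the subscheme $Z$ is cut out by part of a coordinate system, and $\cH^r_Z(\cO_X)$ restricts to $\iota_+\cO_Z$ locally, which is a simple $\cD$-module with endomorphism ring $\C$ by Kashiwara's equivalence. Thus $\phi$ is a scalar on a neighborhood of each point of $Z_{\mathrm{sm}}$, and so $c$ is locally constant on $Z_{\mathrm{sm}}$. Since $Z$ is connected, its irreducible components form a connected incidence graph (any disconnection would split $Z$ as a disjoint union of closed subsets); combining this with the regularity of $c$ on all of $Z$ forces the values of $c$ on irreducible components meeting at a point to agree, so $c=\lambda\in\C$ is globally a single scalar.

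For the final step I would show that $O_0\cH^r_Z(\cO_X)$ generates $\cH^r_Z(\cO_X)$ as a $\cD_X$-module, so that $\phi-\lambda\cdot\id$ vanishes on $O_0$ and hence on all of $\cH^r_Z(\cO_X)$. This is local on $X$: whenever $Z=V(f_1,\ldots,f_r)$ with $f_1,\ldots,f_r$ a regular sequence, the class $[1/(f_1\cdots f_r)]$ in the \v{C}ech presentation $\cH^r_Z(\cO_X)=\cO_X[1/(f_1\cdots f_r)]/\sum_i\cO_X[1/(f_1\cdots\widehat{f_i}\cdots f_r)]$ lies in $O_0$ (each $f_i$ annihilates it) and is well known to generate $\cH^r_Z(\cO_X)$ over $\cD_X$. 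The subtlest step I expect is the constancy argument for $c$: this is the only place where the hypothesis that $Z$ is connected is used, and it hinges on the structural fact that the incidence graph of irreducible components of a pure-dimensional subscheme is connected exactly when the subscheme is.
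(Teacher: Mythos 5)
Your strategy is genuinely different from the paper's proof, which dualizes and invokes the Riemann--Hilbert correspondence: $\mathbf{D}_X\bigl(\cH^r_Z(\cO_X)\bigr)$ corresponds to the perverse sheaf $\underline{\C}_{Z^{\an}}[n-r]$ (perversity is where lci is used), whose endomorphism ring is $H^0(Z^{\an},\C)=\C$ since $Z$ is connected. Your more hands-on approach works up to the last step: the identification $O_0\cong\omega_{Z/X}$, the local analysis near smooth points of $Z_{\rm red}$ via Kashiwara's equivalence, and the connectedness argument forcing $c$ to take a single value $\lambda$ are all fine. (A small adjustment is needed when $Z$ is non-reduced: rather than replacing $Z$ by $Z_{\rm red}$, which destroys the lci structure behind $O_0\cong\omega_{Z/X}$, keep $Z$ and observe that $c-\lambda$ vanishes on a dense open subscheme of $Z$ and hence globally, since $\cO_Z$ is Cohen--Macaulay and therefore has no embedded points.)

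The final step is a genuine gap: $O_0\cH^r_Z(\cO_X)$ does \emph{not} in general generate $\cH^r_Z(\cO_X)$ over $\cD_X$, so this is not ``well known'' and is in fact false. Already for $r=1$, with $f=x_1^2+x_2^2+x_3^2+x_4^2$ on $X=\C^4$, one has $b_f(s)=(s+1)(s+2)$ and $\cO_X[1/f]$ is generated over $\cD_X$ by $1/f^2$ but not by $1/f$: any putative $P\in\cD_X$ with $P(1/f)=1/f^2$ can be averaged over $\SO(4,\R)$ to lie in the subalgebra generated by $f$, $E=\sum x_i\de_{x_i}$, $\Delta=\sum\de_{x_i}^2$, and every such operator of the relevant degree has $\Delta$ on the right after normal ordering, while $\Delta(1/f)=0$ in four variables. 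Equivalently, $\underline{\C}_{Z^{\an}}[3]$ admits $\underline{\C}_{\{0\}}$ as a perverse subobject because $H^3_{\{0\}}(Z,\C)\neq 0$ for this quadric cone. So knowing that $\phi-\lambda\cdot\id$ vanishes on $O_0$ is not enough to conclude. The argument can nevertheless be closed by a different observation: you have already shown that $\phi-\lambda\cdot\id$ vanishes on an open neighborhood of the smooth locus of $Z_{\rm red}$, so $\im(\phi-\lambda\cdot\id)$ is a $\cD_X$-submodule of $\cH^r_Z(\cO_X)$ supported in codimension $>r$ in $X$; and $\cH^r_Z(\cO_X)$ has no nonzero such submodule, since $\Gamma_{W'}\bigl(\cH^r_Z(\cO_X)\bigr)=0$ whenever $\codim_X W'>r$, as one sees from the spectral sequence for $\Gamma_{W'}\circ\Gamma_{Z_{\rm red}}$ together with the vanishings $\cH^i_{W'}(\cO_X)=0$ for $i<\codim W'$ and $\cH^i_{Z_{\rm red}}(\cO_X)=0$ for $i<r$. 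Replacing the false generation claim by this vanishing statement repairs your proof.
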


\begin{proof}
Let $X^{\rm an}$ denote the complex manifold corresponding to $X$. In this proof we
make use of some standard results on holonomic $\cD$-modules. 
In order to prove that every endomorphism of $\cH^r_Z(\cO_X)$ is given by multiplication with a scalar, it is enough to prove
that the same property holds for ${\mathbf D}_X\big(\cH^r_Z(\cO_X)\big)$, where ${\mathbf D}_X$ is the duality functor for
holonomic $\cD_X$-modules (see \cite[Chapter~2.6]{HTT}). Moreover, by the Riemann-Hilbert correspondence 
(see \cite[Chapter~7]{HTT}), it is enough to show that every endomorphism of the perverse sheaf corresponding to 
${\mathbf D}_X\big(\cH^r_Z(\cO_X)\big)$ is given by multiplication with a scalar. For an arbitrary $Z$, this perverse sheaf is
${}^p\cH^0\big(\underline{\C}_{Z^{\rm an}}[n-r]\big)$, where $n=\dim(X)$. However, since $Z$ is locally a complete intersection, the sheaf $\underline{\C}_{Z^{\rm an}}[n-r]$
is a perverse sheaf (see \cite[Theorem~5.1.20]{Dimca}), and thus ${}^p\cH^0\big(\underline{\mathbf C}_{Z^{\rm an}}[n-r]\big)=\underline{\mathbf C}_{Z^{\rm an}}[n-r]$.
The fact that every endomorphism of $\underline{\mathbf C}_{Z^{\rm an}}$ is given by scalar multiplication follows from the fact that $Z$ is connected. 
\end{proof}

\begin{proof}[Proof of Theorem~\ref{thm1.5_intro}]
Since the assertion in the theorem can be checked locally on $X$, we may and will assume that $Z$ is connected.
The key point is to construct an explicit filtered isomorphism as in (\ref{isom_CD}). We first consider the morphism 
$$\tau\colon \cO_X[1/f_1\cdots f_r,s_1,\ldots,s_r]{\mathbf f}^{\mathbf s}\to \cO_X[1/f_1\cdots f_r]$$
given by specializing to $s_1=\ldots=s_r=-1$, that is,
$$\tau\big(P(s_1,\ldots,s_r){\mathbf f}^{\mathbf s}\big)=P(-1,\ldots,-1)\tfrac{1}{f_1\cdots f_r}.$$
It is clear that this is a morphism of left $\cD_X$-modules. 

Via the isomorphism (\ref{eq_Phi}), we will identify $\tau$ with a morphism $B_{\mathbf f}^+\to\cO_X[1/f_1\cdots f_r]$. 
Using (\ref{eq_in_terms_s}) and the fact that $Q_m(-1)=m!$, we can describe $\tau$ by
\begin{equation}\label{eq_in_terms_s2}
\tau\big(\sum_{\beta}h_{\beta}\partial_t^{\beta}\delta_{\mathbf f}\big)=\sum_{\beta}\tfrac{\beta_1!\cdots\beta_r!h_{\beta}}{f_1^{\beta_1+1}\cdots f_r^{\beta_r+1}},
\end{equation}
where $h_{\beta}\in\cO_X[1/f_1\cdots f_r]$ for all $\beta$.

It is clear from the definition that $\tau$ vanishes on $\sum_{i=1}^r(s_i+1)B_{\mathbf f}^+$. Let us consider the image of $t_i B_{\mathbf f}$. 
Note that by Lemma~\ref{lem3_appendix}, if $\beta_i\geq 1$, then 
$$t_iQ_{\beta}(s_1,\ldots,s_r){\mathbf f}^{\mathbf s}=(s_i+1)Q_{\beta'}(s_1,\ldots,s_r)f_i\cdot{\mathbf f}^{\mathbf s}\in {\rm Ker}(\tau),$$
where $\beta'=(\beta_1,\ldots,\beta_i-1,\ldots,\beta_r)$. On the other hand, if $\sum_{\beta}h_{\beta}\partial_t^{\beta}\delta_{\mathbf f}\in B_{\mathbf f}$
(so $h_{\beta}\in\cO_X$ for all $\beta$) and if $\beta$ is such that $\beta_i=0$, then 
$$\tau(t_ih_{\beta}\partial_t^{\beta}\delta_{\mathbf f})=\tau(f_ih_{\beta}\partial_t^{\beta}\delta_{\mathbf f})=h_{\beta}\prod_{j\neq i}\tfrac{\beta_j!}{f_j^{\beta_j+1}}\in
\cO_X[1/f_1\cdots \widehat{f_i}\cdots f_r].$$
We conclude that $\tau$ induces a morphism of $\cD_X$-modules
$$\overline{\tau}\colon V^rB_{\mathbf f}/\sum_{i=1}^rt_i\cdot V^{r-1}B_{\mathbf f}\to \cO_X[1/f_1\cdots f_r]/\sum_{i=1}^r\cO_X[1/f_1\cdots \widehat{f_i}\cdots f_r]
\simeq\cH^r_Z(\cO_X).$$

Let us show that $\overline{\tau}$ is surjective. Given $u=\tfrac{g}{(f_1\cdots f_r)^m}\in\cO_X[1/f_1\cdots f_r]$, for some $g\in\cO_X$ and some $m\geq 1$,
we have $u=\tau(v)$, where $v=\tfrac{g}{(m-1)!^r}\partial_t^{\beta}\delta_{\mathbf f}$, where $\beta=(m-1,\ldots,m-1)$. By the properties of the $V$-filtration, we know that 
we can find $\alpha_1,\ldots,\alpha_N\in\Q$, with $\alpha_i<r$ for all $i$, such that
$w:=(s+\alpha_1)\cdots (s+\alpha_N)v\in V^rB_{\mathbf f}$ (recall that $s=s_1+\ldots+s_r$). Since we can find $p(s)$ and $q(s)$ such that
$p(s)(s+r)+q(s)\prod_{i=1}^N(s+\alpha_i)=1$ and since $(s+r)v\in\sum_{i=1}^r(s_i+1)B_{\mathbf f}\subseteq {\rm Ker}(\tau)$, it follows that 
$u=\tau(v)=\tau\big(q(s)w\big)\in \tau(V^rB_{\mathbf f})$. Therefore we see that $\overline{\tau}$ is surjective.

As we have already mentioned, it follows from \cite[Theorems~1.1 and 1.2(b)]{CD} that we have
the isomorphism of filtered $\cD_X$-modules (\ref{isom_CD}),
where the filtration on the left-hand side is induced by the Hodge filtration on $B_{\mathbf f}$ and the filtration on the right-hand side is the
Hodge filtration that comes from the mixed Hodge module structure on $\cH^r_Z(\cO_X)$. We
recall that our convention is that the Hodge filtration on $B_{\mathbf f}$ is defined to be
\[
F_q B_{\mathbf f}= \bigoplus_{|\alpha| \leq q} \cO_X\cdot \partial^\alpha_t \delta_{\mathbf f},
\]
which differs by a shift by $r$ from the usual convention followed in~\cite{CD}.
In particular, $\overline{\tau}\circ\sigma^{-1}$ is a $\cD_X$-linear endomorphism of $\cH^r_Z(\cO_X)$; hence, by Lemma~\ref{lem_endom}, it is given by multiplication
with some $\lambda\in\C$. Since the morphism is nonzero (being surjective), it follows that $\lambda\neq 0$, hence $\overline{\tau}$ is an isomorphism. 
Moreover, since $\overline{\tau}=\lambda\sigma$, we deduce that $\overline{\tau}$ is a filtered isomorphism, too. Therefore the assertion in the theorem
follows from the definition of the Hodge filtration on $B_{\mathbf f}$ and the description of $\tau$ in (\ref{eq_in_terms_s2}).
\end{proof}

\begin{proof}[Proof of Theorem~\ref{thm1_intro}]
We shall prove the equivalent statement that $F_kB_{\mathbf f}\subseteq V^rB_{\mathbf f}$ if and only if $F_k\cH^r_Z\cO_X=O_k\cH^r_Z\cO_X$. 
The ``only if'' part is clear: since the elements
\[
\left[\frac{1}{f^{\alpha_1+1}_1f_2^{\alpha_2+1}\cdots f^{\alpha_r+1}_r}\right]\in\cH_Z^r(\cO_X)
\]
with $\alpha_1,\ldots,\alpha_r\geq 0$ and $\alpha_1+\alpha_2+\cdots+\alpha_r\leq k$ generate $O_k\cH^r_Z\cO_X$ (see for example \cite[Lemma~9.2]{MP2}),
the ``only if" part follows from Theorem~\ref{thm1.5_intro}.

For the reverse implication, we use induction on $k$. Suppose first that $k=0$. Since $\left[\tfrac{1}{f_1\cdots f_r}\right]\in F_0\cH^r_Z(\cO_X)$, it follows from 
Theorem~\ref{thm1.5_intro} that locally on $X$, we can find $h\in\cO_X$ such that $h\delta_{\mathbf f}\in V^rB_{\bff}$ and $(h-1)$ lies in the ideal ${\mathcal I}_Z$ defining $Z$. 
Therefore $F_0B_{\bff}\subseteq V^rB_{\mathbf f}$ at every point of $Z$ (and outside of $Z$, this is automatic).

Suppose now we know the assertion for $k\geq 0$ and let us prove it for $k+1$. Since $F_{k+1}\cH^r_Z(\cO_X)=O_{k+1}\cH^r_Z(\cO_X)$, it follows from 
\cite[Lemma~9.3]{MP2} that $F_k\cH^r_Z(\cO_X)=O_k\cH^r_Z(\cO_X)$, hence 
by the induction hypothesis we have
$\partial^\alpha_t\delta\in V^rB_{\mathbf f}$ for all $\alpha$ with $|\alpha|\leq k$. We need to show that
$\partial^\alpha_t\delta_{\mathbf f}\in V^rB_{\mathbf f}$ also for all $\alpha$ with $|\alpha|=k+1$. 

Since $F_{k+1}\cH^r_Z(\cO_X)=O_{k+1}\cH^r_Z(\cO_X)$, it follows from Theorem~\ref{thm1.5_intro} that the map
$$\sigma\colon F_{k+1}V^rB_{\bff}\to {\rm Gr}^O_{k+1}\big(\cH^r_Z(\cO_Z)\big), \quad \sum_{|\beta|\leq k+1}h_{\beta}\partial_t^{\beta}\delta_{\bff}\to
\sum_{|\beta|=k+1}\left[\tfrac{\beta_1!\cdots \beta_r!h_{\beta}}{f_1^{\beta_1+1}\cdots f_r^{\beta_r+1}}\right]$$
is surjective. This implies that working locally on $X$, for every $\alpha$ with $|\alpha|=k+1$ we can find 
$$u_{\alpha}=\sum_{|\beta|\leq k+1}h_{\alpha,\beta}\partial_t^{\beta}\delta_{\bff}\in V^rB_{\bff}$$
that is mapped by $\sigma$ to $v_{\alpha}=\left[\tfrac{\alpha_1!\alpha_2!\cdots \alpha_r!}{f_1^{\alpha_1+1}\cdots f_r^{\alpha_r+1}}\right]\in 
{\rm Gr}^O_{k+1}\big(\cH^r_Z(\cO_Z)\big)$. 
Note that since $Z$ is a complete intersection, ${\rm Gr}^O_{k+1}\cH^r_Z(\cO_Z)$ is a free $\cO_Z$-module, with a basis given by the $v_{\alpha}$, with $|\alpha|=k+1$
(see for example \cite[Lemmas~9.1, 9.2]{MP2}). 
This implies that $h_{\alpha,\alpha}-1$ and $h_{\alpha,\beta}$, for $\alpha\neq\beta$, lie in $\cI_Z$. It is then clear that for every $\alpha$ with $|\alpha|=k+1$ we have
$\partial_t^{\alpha}\delta_{\bff}\in V^rB_{\bff}$ at every point of $Z$ (this holds trivially on the complement of $Z$). This completes the proof of the induction step and thus the proof of the 
``if" part.
\end{proof}

We obtain the following consequence to the characterization of local complete intersection Du Bois singularities. We note that the ``if" part is
\cite[Corollary~5.8]{Schwede} and the full equivalence in the case when $Z$ is normal is \cite[Theorem~3.6]{Kovacs} (note that 
if $Z$ is normal and local complete intersection, then
$\lct(X,Z)=r$
if and only if $Z$ has log canonical singularities by Inversion of Adjunction, see \cite[Corollary~3.2]{EM}). A version of the ``only if" implication
also appears in \cite[Theorem~4.2]{Doherty}. 

\begin{cor}\label{cor_IA}
If $X$ is a smooth, irreducible variety and $Z\hookrightarrow X$ is a local complete intersection closed subscheme of pure codimension $r$, then 
$Z$ has Du Bois singularities\footnote{We note that the condition of having Du Bois singularities assumes, in particular, that $Z$ is reduced.} if and only if
$\lct(X,Z)=r$.
\end{cor}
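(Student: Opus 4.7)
The plan is to translate both conditions into a single statement about the Hodge filtration on $\cH^r_Z(\cO_X)$, and then appeal to existing results on Du Bois singularities for the final equivalence.

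First I would observe that by Remark~\ref{lct_bounded_codim} we always have $\lct(X,Z)\leq r$, so combined with Remark~\ref{rmk_min_exp_lct} (the formula $\lct(X,Z)=\min\{\widetilde{\alpha}(Z),r\}$) the equality $\lct(X,Z)=r$ is equivalent to $\widetilde{\alpha}(Z)\geq r$. By Theorem~\ref{thm1_intro} this is in turn equivalent to $p(Z)\geq 0$, that is, to the equality
$$
F_0\cH^r_Z(\cO_X)\,=\,O_0\cH^r_Z(\cO_X). \qquad (\ast)
$$
Thus the corollary reduces to showing that $(\ast)$ is equivalent to $Z$ being Du Bois.

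For the ``if'' direction I would argue as follows: since $Z$ is a local complete intersection, it is Cohen--Macaulay, so Remark~\ref{rel_with_lct2} guarantees that $\lct(X,Z)=r$ forces $Z$ to be reduced (a prerequisite for speaking of the Du Bois property). One may then quote \cite[Cor.~5.8]{Schwede}, which gives the implication $\lct(X,Z)=r\Rightarrow Z$ is Du Bois directly from the log canonical threshold.

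For the ``only if'' direction, suppose $Z$ is Du Bois; in particular it is reduced. I would invoke the known characterization in the LCI setting, namely that for a reduced local complete intersection $Z$ of pure codimension $r$ in a smooth variety $X$, the Du Bois property implies the equality $(\ast)$; a version of this appears in \cite[Thm.~4.2]{Doherty} and can also be extracted from the study of the Hodge filtration on local cohomology in \cite{MP2}. Combined with the chain of equivalences above, this yields $\widetilde{\alpha}(Z)\geq r$ and hence $\lct(X,Z)=r$.

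The main obstacle is already encapsulated in Theorem~\ref{thm1_intro}, where the hard work of connecting the $V$-filtration definition of $\widetilde{\alpha}(Z)$ to the comparison between the Hodge and order filtrations on $\cH^r_Z(\cO_X)$ takes place. Once that theorem (and its input from \cite{CD}) is available, the corollary is essentially a clean synthesis: the novelty is that the invariant $\widetilde{\alpha}(Z)$ introduced in this paper is exactly what bridges $\lct(X,Z)$ and the Hodge-theoretic condition $(\ast)$ that was already known to detect Du Bois singularities in the LCI case.
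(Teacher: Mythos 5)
Your proof is correct and follows essentially the same route as the paper: both pivot on Theorem~\ref{thm1_intro} (together with Remarks~\ref{lct_bounded_codim}, \ref{rel_with_lct2}, and \ref{rmk_min_exp_lct}) to translate $\lct(X,Z)=r$ into $F_0\cH^r_Z(\cO_X)=O_0\cH^r_Z(\cO_X)$, and then invoke the known identification of that Hodge-filtration condition with the Du Bois property in the LCI case. The only cosmetic difference is that the paper closes with the single biconditional \cite[Theorem~C]{MP2}, whereas you split the final step into two one-sided implications and cite \cite{Schwede} and \cite{Doherty} separately (with the Schwede route for the ``if'' direction actually bypassing $(\ast)$ altogether); this is correct but slightly less economical than the paper's argument.
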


\begin{proof}
Recall first that $\lct(X,Z)\leq r$ and equality implies that $Z$ is reduced: see Remarks~\ref{lct_bounded_codim} and \ref{rel_with_lct2}.
We may thus assume that $Z$ is reduced.
Since $Z$ is locally a complete intersection, it follows from \cite[Theorem~C]{MP2} that $Z$ has Du Bois singularities if and only if
$F_0\cH^r_Z(\cO_X)=O_0\cH^r_Z(\cO_X)$ and this condition is equivalent to $\lct(X,Z)\geq r$ by Theorem~\ref{thm1_intro}.
\end{proof}

\section{The minimal exponent and the Bernstein-Sato polynomial}

Let $X$ be a smooth, irreducible, complex algebraic variety and $Z$ a proper (nonempty) closed subscheme of $X$, defined by the ideal $\fra$.
In what follows we will make use of the notation and definitions introduced in Section~\ref{section_review}. Recall, in particular, that
we discussed
 the Bernstein-Sato polynomial $b_Z(s)$ in the case when $\fra$ is generated by nonzero global regular functions $f_1,\ldots,f_d$. 
The general case can be easily reduced to this one (in fact, to the case when $X$ is affine) since for open subsets $U_1,\ldots,U_N$ of $X$
such that $Z\subseteq U_1\cup\ldots\cup U_N$, we have
\begin{equation}\label{eq_bfcn_union}
b_Z(s)={\rm LCM}\big\{b_{Z\cap U_i}(s)\mid 1\leq i\leq N\big\}
\end{equation}
(with the convention that $b_{Z\cap U_i}(s)=1$ if $Z\cap U_i=\emptyset$).

\begin{prop}\label{prop_r}
If $Z$ is locally a complete intersection in $X$, of pure codimension $r$, then $b_Z(-r)=0$.
\end{prop}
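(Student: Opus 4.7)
The plan is to show $(s+r)\mid b_Z(s)$, reducing first by (\ref{eq_bfcn_union}) to the case where $Z$ is globally a complete intersection $V(f_1,\ldots,f_r)$ on affine $X$, so that the $V$-filtration on $B_{\bff}$ is available. The key tool is the commutator identity
$$s + r = -\sum_{i=1}^r t_i\partial_{t_i}$$
in $\cR$, which together with $\partial_{t_i} V^rB_{\bff}\subseteq V^{r-1}B_{\bff}$ gives the inclusion
$$(s+r)\, V^rB_{\bff}\;\subseteq\;\sum_{i=1}^r t_iV^{r-1}B_{\bff}.$$
In the special case $\lct(X,Z)=r$, this essentially closes the argument: by (\ref{formula_lct}) we have $\delta_{\bff}\in V^rB_{\bff}$, and Remark~\ref{lct_bounded_codim} forbids $\delta_{\bff}\in V^{>r}B_{\bff}$, so by (\ref{eq_lct_b_function}) the largest root of $b_Z$ is exactly $-r$.

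For the general case $\lct(X,Z)<r$, I would reduce to the smooth case by deformation. First, for a smooth LCI $Z'$ of pure codimension $r$ one computes $b_{Z'}(s) = s+r$ directly: choose local coordinates $x_1,\ldots,x_n$ with $Z'=V(x_1,\ldots,x_r)$, so that the Jacobian relation $\partial_{x_i}\delta_{\bff'}=-\partial_{t_i}\delta_{\bff'}$ yields
$$(s_i+1)\delta_{\bff'}=-t_i\partial_{t_i}\delta_{\bff'}=t_i\partial_{x_i}\delta_{\bff'}\in V^1\cR\delta_{\bff'};$$
summing over $i$ gives $(s+r)\delta_{\bff'}\in V^1\cR\delta_{\bff'}$, hence $b_{Z'}\mid s+r$, with equality since $\delta_{\bff'}\notin V^1\cR\delta_{\bff'}$. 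I would then perturb $f_i$ to $f_i+\lambda g_i$ for generic $g_i\in\cO_X(X)$, producing a flat family of codimension-$r$ complete intersections $\{Z_\lambda\}$ with $Z_0=Z$ and $Z_\lambda$ smooth for generic $\lambda$ by Bertini.

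To transport the computation from generic $Z_\lambda$ back to $Z_0=Z$, I would invoke Mustata's identity $b_{Z_\lambda}(s)=b_{g_\lambda}(s)/(s+1)$ from Remark~\ref{rmk_relation_b_function}, where $g_\lambda=\sum(f_i+\lambda g_i)y_i$, and apply upper semicontinuity of the Bernstein-Sato polynomial for the flat family of hypersurfaces $\{V(g_\lambda)\}$: $b_{g_\lambda}(s)\mid b_{g_0}(s)=b_g(s)$ for generic $\lambda$, hence $b_{Z_\lambda}(s)\mid b_Z(s)$, and since $b_{Z_\lambda}=s+r$ in the smooth case, $(s+r)\mid b_Z(s)$. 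The main obstacle is ensuring the upper semicontinuity statement in this deformation—this is a standard but nontrivial fact for hypersurface $b$-functions in flat families that needs to be cited carefully, and verifying it in the needed form for the family $\{V(g_\lambda)\}$ is the delicate point of the whole argument.
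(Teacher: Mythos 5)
Your approach is genuinely different from the paper's, and it has a real gap. The paper's proof never invokes a deformation: it uses the [BMS] functional-equation characterization of $b_Z(s)$, which says $b_Z(s)f_1^{s_1}\cdots f_r^{s_r}$ lies in a certain $\cD_X[s_1,\ldots,s_r]$-module spanned by terms indexed by $u\in\Z^r$ with $|u|=1$; specializing at $s_1=\cdots=s_r=-1$ produces a relation in $\cO_X[1/f_1\cdots f_r]$, and since every such $u$ has some $u_i\geq 1$, the right-hand side lands in $\sum_i\cO_X[1/f_1\cdots\widehat{f_i}\cdots f_r]$. Hence $b_Z(-r)\cdot\left[\frac{1}{f_1\cdots f_r}\right]=0$ in $\cH^r_Z(\cO_X)$, and since that class is nonzero when $f_1,\ldots,f_r$ is a regular sequence, $b_Z(-r)=0$. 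This is a short, self-contained argument with no $V$-filtration, no deformation, and no semicontinuity.

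The gap in your argument is the step you yourself flag: the divisibility $b_{g_\lambda}(s)\mid b_{g_0}(s)$ for generic $\lambda$ in a flat family of hypersurfaces is \emph{false} in general, not merely "nontrivial to cite." Already in one variable, take $g_\lambda=x^3+\lambda x^2=x^2(x+\lambda)$: this is a flat family over $\A^1_\lambda$, with $b_{g_0}(s)=(s+1)\bigl(s+\tfrac{1}{3}\bigr)\bigl(s+\tfrac{2}{3}\bigr)$, while for $\lambda\neq 0$ one has $b_{g_\lambda}(s)=(s+1)\bigl(s+\tfrac{1}{2}\bigr)$, which does not divide $b_{g_0}(s)$. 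What is true is semicontinuity of the largest root (the log canonical threshold) and, for hypersurfaces, of the minimal exponent; the full Bernstein--Sato polynomial does not enjoy this kind of divisibility. One might hope a Bertini-generic perturbation behaves better, but that would itself require a separate proof, and it is further complicated by the fact that $V(g_\lambda)\subset X\times\A^r$ remains singular along $Z_\lambda\times\{0\}$ even when $Z_\lambda$ is smooth, so the deformation never reaches a smooth hypersurface on all of $Y$. Your treatment of the case $\lct(X,Z)=r$ and your direct computation $b_{Z'}(s)=s+r$ for smooth $Z'$ are both correct; the opening inclusion $(s+r)V^rB_{\bff}\subseteq\sum_i t_iV^{r-1}B_{\bff}$ is also correct but is not actually used, and by itself does not yield $(s+r)\mid b_Z(s)$.
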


\begin{proof}
If $U$ is an open subset of $X$, then we deduce from (\ref{eq_bfcn_union}) that $b_{Z\cap U}(s)$ divides $b_Z(s)$. It follows that
after replacing $X$ by a suitable affine open neighborhood of a point in $Z$, we may assume that $X$ is affine and the ideal $\fra$ defining $Z$ is generated by
a regular sequence $f_1,\ldots,f_r\in\cO_X(X)$. The condition in the definition of $b_Z(s)$ can be reformulated as saying that $b_Z(s)$ is the monic polynomial
of minimal degree such that
\begin{equation}\label{eq1_prop_r}
b_Z(s)f_1^{s_1}\cdots f_r^{s_r}\in\sum_{u\in\Z^r,|u|=1}\cD_X[s_1,\ldots,s_r]\cdot\prod_{u_i<0}{{s_i}\choose {-u_i}}f_1^{s_1+u_1}\cdots f_r^{s_r+u_r}
\end{equation}
(see \cite[Section 2.10]{BMS}). Here we use the isomorphism (\ref{eq_Phi}),
that maps $\delta_{\bff}$ to $f_1^{s_1}\cdots f_r^{s_r}$, such that the action of $-\partial_{t_i}t_i$ corresponds to the action of $s_i$ (so that $s=s_1+\ldots+s_r$). 
Note also that ${{s_i}\choose {-u_i}}$ denotes the polynomial $\tfrac{1}{(-u_i)!}s_i(s_i-1)\cdots (s_i+u_i-1)$.

By making $s_1=\ldots=s_r=-1$ in (\ref{eq1_prop_r}), we obtain the following relation in $\cO_X[1/f_1\cdots f_r]$:
\begin{equation}\label{eq2_prop_r}
b_Z(-r)\frac{1}{f_1\cdots f_r}\in\sum_{u\in\Z^r, |u|=1}\cD_X\cdot f_1^{u_1-1}\cdots f_r^{u_r-1}.
\end{equation}
Note that if $u=(u_1,\ldots,u_r)$ is such that $|u|=1$, then there is $i$ such that $u_i\geq 1$, in which case 
$$\cD_X\cdot f_1^{u_1-1}\cdots f_r^{u_r-1}\subseteq \cD_X\cdot \prod_{j\neq i}f_j^{u_j-1}\subseteq \cO_X[1/f_1\cdots\widehat{f_i}\cdots f_r].$$
Using (\ref{eq2_prop_r}), we thus conclude that
$$
b_Z(-r)\frac{1}{f_1\cdots f_r}\in\sum_{i=1}^r\cO_X[1/f_1\cdots\widehat{f_i}\cdots f_r],
$$
hence the class of $b_Z(-r)\frac{1}{f_1\cdots f_r}$ in the local cohomology $\cH^r_Z(\cO_X)$ is $0$. Since the class of $\frac{1}{f_1\cdots f_r}$
in the local cohomology is nonzero, we conclude that $b_Z(-r)=0$.
\end{proof}

From now on, for the rest of this section, we assume that $Z$ is a local complete intersection closed subscheme of $X$, of pure codimension $r\geq 1$.
The above result motivates the following 

\begin{defi}
We denote by $\widetilde{\gamma}(Z)$ the negative of the largest root of $b_Z(s)/(s+r)$
(with the convention that $\widetilde{\gamma}(Z)=\infty$ if this polynomial is $1$).
\end{defi}

\begin{rmk}\label{rel_with_lct}
Note that since all roots of $b_Z(s)$ are rational numbers, the same is true for $\widetilde{\gamma}(Z)$.
Recall also that by formula (\ref{eq_lct_b_function}), the largest root of $b_Z(s)$ is $-\lct(X,Z)$, hence
\begin{equation}\label{eq_rel_with_lct}
\lct(X,Z)=\min\big\{\widetilde{\gamma}(Z),r\}.
\end{equation}
\end{rmk}

\begin{rmk}\label{rel_with_rat_sing}
Since $Z$ is locally a complete intersection in $X$, of pure codimension $r$, it follows from \cite[Theorem~4]{BMS} that $\widetilde{\gamma}(Z)>r$
if and only if $Z$ has rational singularities (the result in \emph{loc}. \emph{cit}. requires $Z$ to also be reduced, but the hypothesis $\widetilde{\gamma}(Z)>r$
implies $\lct(X,Z)=r$, and thus $Z$ is reduced by Remark~\ref{rel_with_lct2}).
\end{rmk}

We will need the following easy result: 

\begin{lem}\label{lem_reg_seq}
If $X={\rm Spec}(R)$ is a smooth affine variety and $f_1,\ldots,f_r\in R$ form a regular sequence, then for every $k\geq 0$, the sequence $t_1,\ldots,t_r$
is regular on the finitely generated $R[t_1,\ldots,t_r]$-module $F_kB_{\bff}$.
\end{lem}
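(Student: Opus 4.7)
The plan is to use the natural filtration $F_j B_{\bff} \subseteq F_k B_{\bff}$ for $0 \leq j \leq k$ and induct on $r$. Formula (\ref{eq_action_B_f}) reads $t_i \cdot h\partial_t^\beta \delta_{\bff} = f_i h \partial_t^\beta \delta_{\bff} - \beta_i h \partial_t^{\beta - e_i} \delta_{\bff}$, from which we see that each $F_j B_{\bff}$ is $t_i$-stable — so that $F_k B_{\bff}$ is a genuine $R[t_1,\ldots,t_r]$-submodule of $B_{\bff}$ — and that on the associated graded piece $\mathrm{gr}_j := F_j B_{\bff}/F_{j-1} B_{\bff}$, the action of $t_i$ reduces to multiplication by $f_i$. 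Each $\mathrm{gr}_j$ is free over $R$ with basis $\{[\partial_t^\beta \delta_{\bff}] : |\beta| = j\}$, so the hypothesis that $f_1, \ldots, f_r$ is a regular sequence on $R$ makes it a regular sequence on every $\mathrm{gr}_j$.

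The argument is cleanest if one proves, by induction on $r$, the following general statement: if $N$ is any $R[t_1, \ldots, t_r]$-module equipped with a finite, $t_i$-stable filtration whose associated graded has $t_i$ acting as $f_i$, and on which $f_1,\ldots,f_r$ is a regular sequence, then $t_1,\ldots,t_r$ is a regular sequence on $N$. The base case $r = 1$ is a direct filtration argument: if $t_1 m = 0$ with $m \in F_j N \setminus F_{j-1} N$, then $\bar m \in \mathrm{gr}_j N$ is nonzero but $f_1 \bar m = \overline{t_1 m} = 0$, contradicting the fact that $f_1$ is a nonzerodivisor on $\mathrm{gr}_j N$.

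For the inductive step, the same argument gives $t_1$ injective on $N$. Setting $N' := N / t_1 N$ with induced filtration $F_j N' := (F_j N + t_1 N)/t_1 N$, I want to identify $\mathrm{gr}_j N' \cong \mathrm{gr}_j N / f_1 \mathrm{gr}_j N$; granting this, $t_i$ still acts as $f_i$ on $\mathrm{gr}_j N'$ for $i \geq 2$, and $f_2,\ldots,f_r$ is a regular sequence on each $\mathrm{gr}_j N / f_1 \mathrm{gr}_j N$, so the inductive hypothesis applied to $N'$ yields that $t_2,\ldots,t_r$ is a regular sequence on $N'$, finishing the proof. The identification of $\mathrm{gr}_j N'$ is the one nontrivial step: it amounts to the exchange property that $t_1 m' \in F_j N$ forces $m' \in F_j N$, which in turn follows by applying the base-case argument in the smallest $F_{j'}$ containing $m'$ (if $j' > j$, then since $t_1 m' \in F_j \subseteq F_{j'-1} N$, the class $\bar{m'} \in \mathrm{gr}_{j'} N$ satisfies $f_1 \bar{m'} = \overline{t_1 m'} = 0$, forcing $\bar{m'} = 0$, a contradiction). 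This exchange observation — the only genuinely delicate point, and the one place where the nonzerodivisor property of $f_1$ on the graded pieces is essential — is the main obstacle; once it is in hand, the induction is routine.
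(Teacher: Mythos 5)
Your proof is correct, and it rests on the same key observation as the paper's: each graded piece $\mathrm{gr}_j = F_jB_{\bff}/F_{j-1}B_{\bff}$ is a free $R$-module on which $t_i$ acts as multiplication by $f_i$, so $f_1,\ldots,f_r$ (hence $t_1,\ldots,t_r$) is a regular sequence on each $\mathrm{gr}_j$. Where you diverge is in how that property is transferred from the graded pieces to the filtered module. The paper simply inducts on $k$ using the short exact sequences $0\to F_{k-1}B_{\bff}\to F_kB_{\bff}\to\mathrm{gr}_k\to 0$ and cites the standard fact that a sequence regular on the two ends of a short exact sequence of modules is regular on the middle term. You instead fix the filtration and induct on $r$, proving $t_1$ injective by a graded-piece argument, passing to $N/t_1N$ with its induced filtration, and verifying that the induced graded pieces are $\mathrm{gr}_j/f_1\mathrm{gr}_j$ via the exchange property $t_1m'\in F_jN\Rightarrow m'\in F_jN$. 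Your exchange argument is right, and it is exactly the place where the nonzerodivisor hypothesis on the graded pieces enters. In effect you are reproving, by hand and in the filtered setting, the standard lemma the paper invokes as a black box; what you lose in brevity you gain in self-containment. One small caveat worth flagging for your own records: when you state the auxiliary claim, "on which $f_1,\ldots,f_r$ is a regular sequence" should be understood to refer to each associated graded piece, not to $N$ itself, since that is the hypothesis you actually use and the one available in the application.
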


\begin{proof}
For every $k\geq 0$, the quotient 
$F_kB_{\bff}/F_{k-1}B_{\bff}$ is a free $R$-module. 
Moreover, each $t_i$ acts on this quotient as multiplication by $f_i$. The assertion in the lemma thus follows
by induction on $k$, using the fact that if 
$$0\to M'\to M\to M''\to 0$$
is a short exact sequence of $R[t_1,\ldots,t_r]$-modules such that $t_1,\ldots,t_r$ is a regular sequence on both $M'$ and $M''$,
then it is a regular sequence also on $M$.
\end{proof}

We can now prove our result relating $\widetilde{\gamma}(Z)$ and $\widetilde{\alpha}(Z)$.

\begin{proof}[Proof of Theorem~\ref{thm4_intro}]
After possibly replacing $X$ by suitable affine open subsets, we may and will assume that $X$ is affine
and the ideal defining $Z$ is generated by a regular sequence $f_1,\ldots,f_r$. 
It follows from (\ref{eq_rmk_min_exp_lct}) and (\ref{eq_rel_with_lct}) that we have
$$\min\big\{\widetilde{\gamma}(Z),r\}=\lct(X,Z)=\min\big\{\widetilde{\alpha}(Z),r\}.$$
Therefore both assertions in the theorem hold if $\lct(X,Z)<r$. Hence from now on we may
and will assume that $\lct(X,Z)=r$, which in light of (\ref{formula_lct}), is equivalent to $\delta_{\bff}\in V^rB_{\bff}$. In order to prove the two assertions in the theorem, 
it is enough to show the following:
\begin{enumerate}
\item[(i)] If $q$ is a nonnegative integer and $\gamma\in (0,1]$ is a rational number such that $\widetilde{\gamma}(Z)\geq r-1+q+\gamma$,
then $\widetilde{\alpha}(Z)\geq r-1+q+\gamma$; by definition of the minimal exponent, this is equivalent to $\partial_t^{\beta}\delta_{\bff}\in V^{r-1+\gamma}B_{\bff}$
for all $\beta\in\Z_{\geq 0}^r$ with $|\beta|\leq q$.
\item[(ii)] If $\gamma'\in (0,1]$ is a rational number such that $\widetilde{\alpha}(Z)\geq r+\gamma'$, then $\widetilde{\gamma}(Z)\geq r+\gamma'$. 
\end{enumerate}

We first prove (i), arguing by induction on $q\geq 0$. If $q=0$, then we are done, since we are assuming $\delta_{\bff}\in V^rB_{\bff}$. Suppose now that $q\geq 1$.
By the induction hypothesis, it is enough 
to show that for every $\beta\in\Z_{\geq 0}^r$, with $|\beta|=q-1$, if $u=\partial_t^{\beta}\delta_{\bff}$, then $\partial_{t_i}u\in V^{r-1+\gamma}B_{\bff}$
for $1\leq i\leq r$. Furthermore, the induction hypothesis gives $u\in V^rB_{\bff}$. Let's write $b(s):=b_Z(s)=(s+r)p(s)$.

By definition of $b_Z(s)$, we have
\begin{equation}\label{eq1_thm4_intro}
b(s)V^0\cR\cdot \delta_{\bff}\subseteq V^1\cR\cdot\delta_{\bff}.
\end{equation}
We first note that for every $i$ with $1\leq i\leq q-1$, if $\beta'\in\Z_{\geq 0}^r$ is such that $|\beta'|=q-i$, then there is $\beta''\in\Z^r_{\geq 0}$ with 
$|\beta''|=q-i-1$ such that 
\begin{equation}\label{eq2_thm4_intro}
b(s-q+i)\partial_t^{\beta'}V^0\cR\cdot\delta_{\bff}\subseteq \partial_t^{\beta''}V^0\cR\cdot \delta_{\bff}.
\end{equation}
Indeed, it follows from Lemma~\ref{lem2_appendix} that if we take any $\beta''\in\Z_{\geq 0}^r$ with $|\beta''|=q-i-1$
and $\beta'_j\geq\beta''_j\geq 0$ for all $j$, then
$$b(s-q+i)\partial_t^{\beta'}V^0\cR\cdot\delta_{\bff}=\partial_t^{\beta'}b(s)V^0\cR\cdot\delta_{\bff}
\subseteq \partial_t^{\beta'}V^1\cR\cdot\delta_{\bff}\subseteq\partial_t^{\beta''}V^0\cR\cdot\delta_{\bff}.$$
Applying (\ref{eq2_thm4_intro}) for all $i$ with $1\leq i\leq q-1$, as well as (\ref{eq1_thm4_intro}), we obtain
$$b(s)b(s-1)\cdots b(s-q+1)\partial_t^{\beta}\delta_{\bff} \subseteq b(s)V^0\cR\cdot \delta_{\bff}\subseteq V^1\cR\cdot\delta_{\bff}\subseteq V^{r+1}B_{\bff}.$$
Note now that we can write 
$$b(s)b(s-1)\cdots b(s-q+1)=(s+r)p_1(s),$$
where 
$$p_1(s)=(s+r-1)\cdots (s+r-q+1)\cdot\prod_{i=0}^{q-1}p(s-i).$$
The assumption that $\widetilde{\gamma}(Z)\geq r-1+q+\gamma$ implies that $p_1(s)$ has no roots in the interval $(-r-\gamma,-r]$. Since $(s+r)u\in V^rB_{\bff}$ (recall that $u\in V^rB_{\bff}$) and $p_1(s)(s+r)u\in V^{r+1}B_{\bff}$, we conclude that
$(s+r)u\in V^{r+\gamma}B_{\bff}$. By assumption, we have $u\in F_{q-1}B_{\bff}$, hence $(s+r)u\in F_qV^{r+\gamma}B_{\bff}$ (where for every $p\in\Z_{\geq 0}$ and every
$\alpha\in\Q$, we put $F_pV^{\alpha}B_{\bff}=F_pB_{\bff}\cap V^{\alpha}B_{\bff}$). 

Consider now the map
$$(F_qV^{r-1+\gamma}B_{\bff})^{\oplus r}\overset{(t_1,\ldots,t_r)}\longrightarrow F_qV^{r+\gamma}B_{\bff}.$$
Since $\gamma>0$, it follows from \cite[Theorem~1.1]{CD} that the map is surjective, hence we may write
$$(s+r)u=\sum_{i=1}^rt_iu_i,$$
with $u_i\in F_qV^{r-1+\gamma}B_{\bff}$ for $1\leq i\leq r$. 
Since $s+r=-\sum_{i=1}^rt_i\partial_{t_i}$, we obtain
$$\sum_{i=1}^rt_i(u_i+\partial_{t_i}u)=0.$$
Using the fact that $t_1,\ldots,t_r$ form a regular sequence on $F_qB_{\bff}$ by Lemma~\ref{lem_reg_seq}, we conclude that
for every $i$, we have
$$u_i+\partial_{t_i}u\in \sum_{i=1}^rt_i\cdot F_qB_{\bff}\subseteq V^rB_{\bff},$$
where the inclusion follows from the fact that we already know, by induction, that $F_{q-1}B_{\bff}\subseteq V^rB_{\bff}$ and thus $F_qB_{\bff}\subseteq 
\sum_{i=1}^r\partial_{t_i}\cdot V^rB_{\bff}\subseteq V^{r-1}B_{\bff}$. 
We thus conclude that
$$\partial_{t_i}u=(u_i+\partial_{t_i}u)-u_i\in V^{r-1+\gamma}B_{\bff}.$$
This completes the proof of (i). 

We next prove (ii). We will make use of the results in Section~\ref{section_rel_Vfilt}.
By definition of $\widetilde{\alpha}(Z)$, we know that $\partial_{t_i}\delta_{\bff}\in V^{r-1+\gamma'}B_{\bff}$ for $1\leq i\leq r$.
Theorem~\ref{thm_V_filtration} (see also equation (\ref{eq_comp_V_fil})) 
thus gives $y_i\delta_g\in V^{r+\gamma'}\widetilde{B}_g$ for $1\leq i\leq r$. By Lemma~\ref{lem_theta}, we have
$v:=(s+r)\delta_g=\sum_{i=1}^r\partial_{y_i}y_i\delta_g \in V^{r+\gamma'}\widetilde{B}_g$. Using the description (\ref{eq2_formula_V_filtration}), we deduce that all roots of 
$\widetilde{b}_v(s)$ are $\leq -(r+\gamma')$. On the other hand, the inclusions
$$\widetilde{b}_v(s)(s+r)\delta_g\subseteq V^1\widetilde{\cR}\cdot (s+r)\delta_g\subseteq V^1\widetilde{\cR}\cdot\delta_g,$$
imply that $\widetilde{b}_{\delta_g}$ divides $\widetilde{b}_v(s)(s+r)$. 
Since $b_Z(s)=\widetilde{b}_{\delta_g}$ (see Remark~\ref{rmk_relation_b_function}), we conclude that
$\widetilde{\gamma}(Z)\geq r+\gamma'$. This completes the proof of the theorem.
\end{proof}

We can now show that the equality $F_1=O_1$ on $\cH^r_Z(\cO_X)$ implies that $Z$ has rational singularities.

\begin{proof}[Proof of Corollary~\ref{cor_intro}]
It follows from Theorem~\ref{thm4_intro} that $\widetilde{\alpha}(Z)>r$ if and only if $\widetilde{\gamma}(Z)>r$. By \cite[Theorem~4]{BMS}, this holds if and only if 
$Z$ has rational singularities (see also Remark~\ref{rel_with_rat_sing}). The last assertion in the corollary now follows from Theorem~\ref{thm1_intro}.
\end{proof}

\section{Appendix: some formulas involving differential operators}

In this appendix we collect for ease of reference some easy computations involving differential operators. 
We work in the ring $\C\langle t,\partial_t,\partial_t^{-1}\rangle$ and put
 $s=-\partial_tt$. 

\begin{lem}\label{lem0_appendix}
For every $m\geq 1$ we have
$[\partial_t,t^m]=mt^{m-1}$ and for every $m\in\Z$ we have $[t,\partial_t^m]=-m\partial_t^{m-1}$.
\end{lem}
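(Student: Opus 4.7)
The plan is to verify both identities by elementary induction, using only the defining relation $[\partial_t, t] = 1$ and the relation $\partial_t \cdot \partial_t^{-1} = 1 = \partial_t^{-1}\cdot \partial_t$ in $\C\langle t, \partial_t, \partial_t^{-1}\rangle$.

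For the first identity, I would induct on $m \geq 1$. The base case $m = 1$ is just the defining relation $[\partial_t, t] = 1$. For the inductive step, I use the standard derivation-like identity $[\partial_t, ab] = [\partial_t, a]b + a[\partial_t, b]$, which follows from expanding both sides, applied to $a = t$ and $b = t^m$:
\[
[\partial_t, t^{m+1}] = [\partial_t, t]\,t^m + t\,[\partial_t, t^m] = t^m + t \cdot m t^{m-1} = (m+1)t^m.
\]

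For the second identity, I would first handle the case $m \geq 0$ by induction, with the case $m = 0$ being trivial and $m = 1$ giving $[t, \partial_t] = -1$ directly. The inductive step is symmetric to the above: using $[t, ab] = [t,a]b + a[t,b]$ with $a = \partial_t$ and $b = \partial_t^m$, we get
\[
[t, \partial_t^{m+1}] = [t, \partial_t]\,\partial_t^m + \partial_t\,[t, \partial_t^m] = -\partial_t^m + \partial_t(-m\partial_t^{m-1}) = -(m+1)\partial_t^m.
\]
For $m < 0$, the cleanest route is a second induction (on $-m$) starting from the computation of $[t, \partial_t^{-1}]$: applying $[t, -]$ to the identity $\partial_t \cdot \partial_t^{-1} = 1$ yields
\[
0 = [t, \partial_t]\partial_t^{-1} + \partial_t[t, \partial_t^{-1}] = -\partial_t^{-1} + \partial_t[t, \partial_t^{-1}],
\]
so $[t, \partial_t^{-1}] = \partial_t^{-2}$, which agrees with the formula for $m = -1$. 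The downward inductive step uses $\partial_t^{m-1} = \partial_t^{-1} \cdot \partial_t^m$ together with the Leibniz rule and the already-established value of $[t, \partial_t^{-1}]$.

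There is no real obstacle here; the only point requiring mild care is the negative-exponent case, where one must avoid circular reasoning by first establishing $[t,\partial_t^{-1}]$ from the relation $\partial_t \partial_t^{-1} = 1$ before propagating the formula to all negative $m$.
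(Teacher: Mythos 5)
Your proof is correct. The paper's argument is a bit more compact: for the first identity it simply invokes the general fact that $[D,f]=D(f)$ for any derivation $D$ and regular function $f$ (rather than inducting), and for the second it runs a single bidirectional induction on $|m|$, observing that the rearrangement $[t,\partial_t^{m+1}]=[t,\partial_t^m]\partial_t-\partial_t^m$ shows the formula holds for $m$ if and only if it holds for $m+1$, so that starting from $m=1$ one propagates in both directions at once. You instead run two separate inductions --- upward for $m\geq 0$ and downward for $m<0$ --- and anchor the negative case with a separate computation of $[t,\partial_t^{-1}]$ obtained by applying $[t,\,\cdot\,]$ to $\partial_t\partial_t^{-1}=1$. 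Both routes are valid and use the same underlying algebra; the paper's version avoids needing a separate anchor for negative exponents, while yours makes the negative-$m$ case more explicit. Your closing remark about avoiding circularity is the right concern and your handling of it is fine.
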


\begin{proof}
The first formula follows from the more general fact that for every derivation $D$ and every regular function $f$, we have $[D,f]=D(f)$.
The second formula is clear if $m=1$ and the general case follows by induction on $|m|$, using the fact that
$$[t,\partial_t^{m+1}]=(t\partial_t^m-\partial_t^mt)\partial_t+\partial_t^m(t\partial_t-\partial_tt)=[t,\partial_t^m]\partial_t-\partial_t^m,$$
which immediately implies that the formula holds for $m$ if and only if it holds for $m+1$.
\end{proof}

\begin{lem}\label{lem1_appendix}
For every $m\geq 1$, we have 
$$\partial_t^mt^m=(-1)^ms(s-1)\cdots (s-m+1).$$
\end{lem}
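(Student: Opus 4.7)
The plan is to reduce the identity to a computation on polynomials. Both sides of the asserted equality lie in the Weyl algebra $\C\langle t,\partial_t\rangle \subseteq \C\langle t,\partial_t,\partial_t^{-1}\rangle$, since $s = -\partial_t t$. Because this algebra acts faithfully on $\C[t]$, it suffices to verify the identity when applied to each monomial $t^k$, $k\geq 0$.

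The left-hand side is immediate:
\[
\partial_t^m t^m \cdot t^k \;=\; \partial_t^m(t^{m+k}) \;=\; \tfrac{(m+k)!}{k!}\, t^k.
\]
For the right-hand side, the basic computation $s\cdot t^k = -\partial_t(t^{k+1}) = -(k+1)\,t^k$ shows that $s$ acts on $t^k$ as the scalar $-(k+1)$, and hence $s-j$ acts as $-(k+1+j)$. Consequently,
\[
s(s-1)\cdots(s-m+1)\cdot t^k \;=\; \prod_{j=0}^{m-1}\!\bigl(-(k+1+j)\bigr)\, t^k \;=\; (-1)^m \tfrac{(m+k)!}{k!}\, t^k,
\]
so multiplying by $(-1)^m$ yields exactly the left-hand side.

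The only point requiring care is the justification that two elements of $\C\langle t,\partial_t\rangle$ agreeing on every monomial $t^k$ must be equal, which is the faithfulness of the standard representation of the Weyl algebra; this is routine. As an alternative, one can proceed by induction on $m$: the base case $m=1$ is just $\partial_t t = -s$ from the definition of $s$, and for the inductive step one writes $\partial_t^{m+1} t^{m+1} = t\partial_t^{m+1} t^m + (m+1)\partial_t^m t^m$ via Lemma~\ref{lem0_appendix}, and then simplifies using the commutation relations $\partial_t s = (s-1)\partial_t$, $ts = (s+1)t$, and $t\partial_t = -(s+1)$ (all easy consequences of $[\partial_t,t]=1$) together with the induction hypothesis. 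No real obstacle is expected in either approach.
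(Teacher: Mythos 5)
Your proof is correct. The main argument---evaluating both sides on each monomial $t^k$ and invoking the faithfulness of the standard representation of the Weyl algebra on $\C[t]$---is a genuinely different route from the paper. The paper proves the identity by induction on $m$, writing $\partial_t^{m+1}t^{m+1}=\partial_t t\,\partial_t^m t^m + \partial_t[\partial_t^m,t]t^m = (\partial_t t + m)\partial_t^m t^m = (-s+m)\partial_t^m t^m$ via Lemma~\ref{lem0_appendix}, and then applying the inductive hypothesis. Your monomial computation is arguably cleaner and more transparent---the scalar $-(k+1)$ by which $s$ acts on $t^k$ makes the factorial immediately visible---at the price of appealing to faithfulness, which, while routine, is an extra input not needed in the paper's purely algebraic induction. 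Your alternative inductive sketch is essentially the paper's argument, with a slightly different split (commuting $\partial_t^{m+1}$ past a single $t$ rather than commuting $\partial_t^m$ past $t$ after peeling one $\partial_t$ off the left); both work equally well.
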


\begin{proof}
The assertion is clear when $m=1$ and the general case follows by induction on $m$, writing
$$\partial_t^{m+1}t^{m+1}=\partial_tt\partial_t^mt^m+\partial_t[\partial_t^m,t]t^m=(\partial_tt+m)\partial_t^mt^m,$$
where the last equality follows using Lemma~\ref{lem0_appendix}.
\end{proof}

\begin{lem}\label{lem2_appendix}
For every $m\in\Z$ and every $P\in\C[s]$, we have $P(s)\partial_t^m=\partial_t^mP(s+m)$.
\end{lem}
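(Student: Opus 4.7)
The plan is to reduce the identity $P(s)\partial_t^m = \partial_t^m P(s+m)$ to the case $P(s) = s$ by a multiplicative argument, and then establish the base case directly from Lemma~\ref{lem0_appendix}.

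First, I would observe that the map $\sigma\colon \C[s]\to \C[s]$ given by $\sigma(P)(s) = P(s+m)$ is a $\C$-algebra homomorphism. Therefore, if we know the identity $P(s)\partial_t^m = \partial_t^m \sigma(P)$ for $P = s$ (and trivially for constants), then for arbitrary products it follows by iteration: writing $s^k\partial_t^m = s^{k-1}(s\partial_t^m) = s^{k-1}\partial_t^m(s+m) = \cdots = \partial_t^m(s+m)^k$, and then extending by linearity to all polynomials $P\in\C[s]$. So the whole lemma reduces to verifying
\[
s\partial_t^m = \partial_t^m(s+m).
\]

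For the base case, I would unfold the definition $s = -\partial_t t$ and use Lemma~\ref{lem0_appendix} (which gives $[t,\partial_t^m] = -m\partial_t^{m-1}$) to compute
\[
s\partial_t^m = -\partial_t t\partial_t^m = -\partial_t\bigl(\partial_t^m t - m\partial_t^{m-1}\bigr) = -\partial_t^{m+1}t + m\partial_t^m = \partial_t^m(-\partial_t t) + m\partial_t^m = \partial_t^m(s+m),
\]
which is exactly the identity we need.

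There is no real obstacle here: Lemma~\ref{lem0_appendix} is already available and does all the combinatorial work, and the extension from $P = s$ to arbitrary $P\in\C[s]$ is purely formal using that $\sigma$ is a ring homomorphism. The only minor subtlety worth noting explicitly is that the formula makes sense (and is being proved) for all $m\in\Z$, including negative $m$; this is automatic since Lemma~\ref{lem0_appendix} itself was established for all $m\in\Z$.
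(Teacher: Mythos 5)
Your proof is correct and follows essentially the same route as the paper: reduce to the case $P=s$ (the paper notes this reduction briefly, you spell out the ring-homomorphism argument), and then verify $s\partial_t^m=\partial_t^m(s+m)$ by unwinding $s=-\partial_t t$ and invoking the commutator formula $[t,\partial_t^m]=-m\partial_t^{m-1}$ from Lemma~\ref{lem0_appendix}. The only cosmetic difference is that the paper phrases the base case as computing $s\partial_t^m-\partial_t^m s$, while you expand $t\partial_t^m$ directly; the content is identical.
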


\begin{proof}
It is easy to see that it is enough to prove the equality when $P$ is a monomial and then that it is enough to prove it when $P=s$. 
In this case we have
$$s\partial_t^m-\partial_t^ms=-\partial_tt\partial_t^m+\partial_t^{m+1}t=-\partial_t[t,\partial_t^m]=m\partial_t^m,$$
where the last equality follows from Lemma~\ref{lem0_appendix}.
\end{proof}

The proof of the next lemma is similar and we leave it for the reader.

\begin{lem}\label{lem3_appendix}
For every $m\in\Z_{\geq 0}$ and every $P\in\C[s]$, we have $P(s)t^m=t^mP(s-m)$.
\end{lem}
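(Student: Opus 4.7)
The plan is to follow verbatim the strategy used to establish Lemma~\ref{lem2_appendix}. By $\mathbf{C}$-linearity in $P$ it suffices to treat the case when $P$ is a monomial $s^k$, and then by induction on $k$ the whole statement reduces to the single commutation identity
\[
s\, t^m \;=\; t^m (s-m).
\]
Indeed, if this base identity holds, then for $k \geq 1$ one has
\[
s^k t^m \;=\; s^{k-1}\bigl(s\, t^m\bigr) \;=\; s^{k-1} t^m (s-m) \;=\; t^m (s-m)^{k-1}(s-m) \;=\; t^m (s-m)^k,
\]
using the inductive hypothesis in the middle step.

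For the base identity, I would substitute $s = -\partial_t t$ and compute $s t^m - t^m s$ directly. The plan is to use Lemma~\ref{lem0_appendix} to move $\partial_t$ past $t^{m+1}$: one has $\partial_t t^{m+1} = t^{m+1}\partial_t + (m+1)t^m$, while also $t^m \partial_t t = t^{m+1}\partial_t + t^m$. Subtracting,
\[
s t^m - t^m s \;=\; -\partial_t t^{m+1} + t^m \partial_t t \;=\; -\bigl(t^{m+1}\partial_t + (m+1)t^m\bigr) + \bigl(t^{m+1}\partial_t + t^m\bigr) \;=\; -m t^m,
\]
which rearranges to $s t^m = t^m(s-m)$, as needed.

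Since this is a direct mirror of the argument for Lemma~\ref{lem2_appendix}, there is no real obstacle; the only care required is sign-bookkeeping when substituting $s = -\partial_t t$ and applying the commutator formula from Lemma~\ref{lem0_appendix}. This is presumably why the authors leave the proof to the reader.
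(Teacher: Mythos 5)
Your proof is correct and is exactly the argument the paper has in mind: the text states that the proof is ``similar'' to that of Lemma~\ref{lem2_appendix} and leaves it to the reader, and your reduction to a monomial, induction on the degree, and direct computation of $s\,t^m - t^m s$ via $s=-\partial_t t$ and Lemma~\ref{lem0_appendix} is precisely that mirror argument.
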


\section*{References}
\begin{biblist}

\bib{BMS}{article}{
   author={Budur, N.},
   author={Musta\c{t}\u{a}, M.},
   author={Saito, M.},
   title={Bernstein-Sato polynomials of arbitrary varieties},
   journal={Compos. Math.},
   volume={142},
   date={2006},
   number={3},
   pages={779--797},
}

\bib{CD}{article}{
   author={Chen, Q.},
   author={Dirks, B.},
   title={On $V$-filtration, Hodge filtration and Fourier transform},
   journal={Selecta Math. (N.S.)},
   volume={29},
   date={2023},
   number={4},
   pages={Paper No. 50, 76 pp},
}

\bib{Dimca}{book}{
   author={Dimca, A.},
   title={Sheaves in topology},
   series={Universitext},
   publisher={Springer-Verlag, Berlin},
   date={2004},
   pages={xvi+236},
}

\bib{DMST}{article}{
 author={Dimca, A.},
   author={Maisonobe, P.},
   author={Saito, M.},
   author={Torrelli, T.},
   title={Multiplier ideals, $V$-filtrations and transversal sections},
   journal={Math. Ann.},
   volume={336},
   date={2006},
   number={4},
   pages={901--924},
   }

\bib{Dirks}{article}{
author={Dirks, B.},
title={Some applications of microlocalization for local complete intersection subvarieties},
journal={preprint arXiv:2310.15277},
date={2023},
}

\bib{Doherty}{article}{
   author={Doherty, D. C.},
   title={Singularities of generic projection hypersurfaces},
   journal={Proc. Amer. Math. Soc.},
   volume={136},
   date={2008},
   number={7},
   pages={2407--2415},
}

\bib{EM}{article}{
   author={Ein, L.},
   author={Musta\c{t}\v{a}, M.},
   title={Inversion of adjunction for local complete intersection varieties},
   journal={Amer. J. Math.},
   volume={126},
   date={2004},
   number={6},
   pages={1355--1365},
}

\bib{HTT}{book}{
   author={Hotta, R.},
   author={Takeuchi, K.},
  author={Tanisaki, T.},
  title={D-modules, perverse sheaves, and representation theory},
    publisher={Birkh\"auser, Boston},
   date={2008},
}

\bib{Kashiwara}{article}{
author={Kashiwara, M.},
title={Vanishing cycle sheaves and holonomic systems of differential
equations},
conference={
 title={Algebraic geometry},
 address={Tokyo/Kyoto},
date={1982},
},
book={
 series={Lecture Notes in Math.},
 volume={1016},
  publisher={Springer, Berlin},
 },
date={1983},
pages={134--142},
}

\bib{Kollar}{article}{
   author={Koll\'ar, J.},
   title={Singularities of pairs},
   conference={
      title={Algebraic geometry---Santa Cruz 1995},
   },
   book={
      series={Proc. Sympos. Pure Math.},
      volume={62},
      publisher={Amer. Math. Soc., Providence, RI},
   },
   date={1997},
   pages={221--287},
}

\bib{Kovacs}{article}{
   author={Kov\'{a}cs, S.~J.},
   title={Rational, log canonical, Du Bois singularities: on the conjectures
   of Koll\'{a}r and Steenbrink},
   journal={Compositio Math.},
   volume={118},
   date={1999},
   number={2},
   pages={123--133},
}

\bib{Lazarsfeld}{book}{
       author={Lazarsfeld, R.},
       title={Positivity in algebraic geometry II},  
       series={Ergebnisse der Mathematik und ihrer Grenzgebiete},  
       volume={49},
       publisher={Springer-Verlag, Berlin},
       date={2004},
}      

\bib{Lichtin}{article}{
   author={Lichtin, B.},
   title={Poles of $|f(z, w)|^{2s}$ and roots of the $b$-function},
   journal={Ark. Mat.},
   volume={27},
   date={1989},
   number={2},
   pages={283--304},
}

\bib{Loeser}{article}{
   author={Loeser, F.},
   title={Exposant d'Arnold et sections planes},
   journal={C. R. Acad. Sci. Paris S\'{e}r. I Math.},
   volume={298},
   date={1984},
   number={19},
   pages={485--488},
}

\bib{Malgrange}{article}{
  author= {Malgrange, B.},
     title= {Polynomes de {B}ernstein-{S}ato et cohomologie \'evanescente},
 booktitle= {Analysis and topology on singular spaces, {II}, {III}
              ({L}uminy, 1981)},
    series = {Ast\'erisque},
    volume = {101},
    pages = {243--267},
 publisher = {Soc. Math. France, Paris},
      date = {1983},
      }

\bib{Mustata0}{article}{
   author={Musta\c{t}\v{a}, M.},
   title={The multiplier ideals of a sum of ideals},
   journal={Trans. Amer. Math. Soc.},
   volume={354},
   date={2002},
   number={1},
   pages={205--217},
}

\bib{Mustata}{article}{
   author={Musta\c{t}\u{a}, M.},
   title={Bernstein-Sato polynomials for general ideals vs. principal
   ideals},
   journal={Proc. Amer. Math. Soc.},
   volume={150},
   date={2022},
   number={9},
   pages={3655--3662},
}

\bib{MP3}{article}{
      author={Musta\c t\u a, M.},
      author={Popa, M.},
      title={Hodge ideals for $\Q$-divisors: birational approach},
      journal={J. de l'\'Ecole Polytechnique}, 
      date={2019}, 
      number={6},
      pages={283--328},
}


\bib{MP1}{article}{
   author={Musta\c{t}\u{a}, M.},
   author={Popa, M.},
   title={Hodge ideals for ${\bf Q}$-divisors, $V$-filtration, and minimal
   exponent},
   journal={Forum Math. Sigma},
   volume={8},
   date={2020},
   pages={Paper No. e19, 41},
}

\bib{MP2}{article}{
   author={Musta\c{t}\u{a}, M.},
   author={Popa, M.},
   title={Hodge filtration on local cohomology, Du Bois complex and local
   cohomological dimension},
   journal={Forum Math. Pi},
   volume={10},
   date={2022},
   pages={Paper No. e22, 58 pp},
}

\bib{Saito-MHP}{article}{
   author={Saito, M.},
   title={Modules de Hodge polarisables},
   journal={Publ. Res. Inst. Math. Sci.},
   volume={24},
   date={1988},
   number={6},
   pages={849--995},
}

\bib{Saito_MHM}{article}{
   author={Saito, M.},
   title={Mixed Hodge modules},
   journal={Publ. Res. Inst. Math. Sci.},
   volume={26},
   date={1990},
   number={2},
   pages={221--333},
}

\bib{Saito-B}{article}{
   author={Saito, M.},
   title={On $b$-function, spectrum and rational singularity},
   journal={Math. Ann.},
   volume={295},
   date={1993},
   number={1},
   pages={51--74},
}

\bib{Saito_microlocal}{article}{
   author={Saito, M.},
   title={On microlocal $b$-function},
   journal={Bull. Soc. Math. France},
   volume={122},
   date={1994},
   number={2},
   pages={163--184},
}

\bib{Saito-MLCT}{article}{
      author={Saito, M.},
	title={Hodge ideals and microlocal $V$-filtration},
	journal={preprint arXiv:1612.08667}, 
	date={2016}, 
}

\bib{Schnell}{article}{
   author={Schnell, C.},
   title={An overview of Morihiko Saito's theory of mixed Hodge modules},
   conference={
      title={Representation theory, automorphic forms \& complex geometry},
   },
   book={
      publisher={Int. Press, Somerville, MA},
   },
   date={2019},
   pages={27--80},
}

\bib{Schwede}{article}{
   author={Schwede, K.},
   title={A simple characterization of Du Bois singularities},
   journal={Compos. Math.},
   volume={143},
   date={2007},
   number={4},
   pages={813--828},
}

\bib{Steenbrink}{article}{
    AUTHOR = {Steenbrink, J. H. M.},
     TITLE = {Semicontinuity of the singularity spectrum},
   JOURNAL = {Invent. Math.},
  FJOURNAL = {Inventiones Mathematicae},
    VOLUME = {79},
      YEAR = {1985},
    NUMBER = {3},
     PAGES = {557--565},
     }

\bib{Varchenko}{article}{
   author={Varchenko, A. N.},
   title={The complex singularity index does not change along the stratum
   $\mu ={\rm const}$},
   journal={Funktsional. Anal. i Prilozhen.},
   volume={16},
   date={1982},
   number={1},
   pages={1--12, 96},
}

\end{biblist}

\end{document}